\documentclass[11pt]{amsproc}

\usepackage[hiresbb]{graphicx, xcolor}
\usepackage{wrapfig}
\usepackage{bm}

\usepackage{amscd, amsmath, amsthm, amssymb, mathdots, mathtools, mathrsfs, stmaryrd}
\usepackage{ascmac}
\usepackage{comment}
\usepackage{bm, bbm}
\allowdisplaybreaks

\usepackage{CJKutf8}

\usepackage{fullpage}

\usepackage{hyperref}

\usepackage{tikz}
\usetikzlibrary{intersections, calc, arrows.meta}

\usepackage{here}
\usepackage{time}
\usepackage[abbrev]{amsrefs}

\usepackage{xcolor}
\usepackage[capitalize,nameinlink,noabbrev,nosort]{cleveref}
\hypersetup{
	colorlinks=true,       % false: boxed links; true: colored links
	linkcolor=brown,          % color of internal links
	citecolor=brown,        % color of links to bibliography
	filecolor=brown,      % color of file links
	urlcolor=brown,           % color of external links
}

\makeatletter
\@namedef{subjclassname@2020}{\textup{2020} Mathematics Subject Classification}
\makeatother

% --------------------------------------------------------------------------
%	environments
% --------------------------------------------------------------------------

\newtheorem*{theorem*}{\hspace{-6.3mm}\textbf{Theorem}}  %%% 

\newtheorem{theoremcounter}{Theorem Counter}[section]

\theoremstyle{remark}
\newtheorem{remark}[theoremcounter]{Remark}

\theoremstyle{definition}
\newtheorem{definition}[theoremcounter]{Definition}
\newtheorem{example}[theoremcounter]{Example}

\theoremstyle{plain}
\newtheorem{lemma}[theoremcounter]{Lemma}
\newtheorem{proposition}[theoremcounter]{Proposition}
\newtheorem{corollary}[theoremcounter]{Corollary}

\newtheorem{theorem}[theoremcounter]{Theorem}

\numberwithin{equation}{section}

\newcommand{\Z}{\mathbb{Z}}
\newcommand{\Q}{\mathbb{Q}}
\newcommand{\R}{\mathbb{R}}
\newcommand{\C}{\mathbb{C}}

\newcommand{\bbH}{\mathbb{H}}

\newcommand{\calQ}{\mathcal{Q}}

\DeclareMathOperator{\ImNew}{Im}
\renewcommand{\Im}{\ImNew}

\DeclareMathOperator{\SL}{SL}

\DeclareMathOperator{\GL}{GL}

\newcommand{\pmat}[1]{\begin{pmatrix}#1\end{pmatrix}}

\newcommand{\smat}[1]{\bigl(\begin{smallmatrix}#1\end{smallmatrix}\bigr)}

\def\piros#1{{\textbf{\color{red}#1}}}%
%

% --------------------------------------------------------------------------

% --------------------------------------------------------------------------
\begin{document}
% --------------------------------------------------------------------------

\title[]{On the Alexander polynomials of modular knots}

\author[]{Soon-Yi Kang}
\address{Department of Mathematics, Kangwon National University, Chuncheon, 24341, Republic of Korea}
\email{sy2kang@kangwon.ac.kr}

\author[]{Toshiki Matsusaka}
\address{Faculty of Mathematics, Kyushu University, Motooka 744, Nishi-ku, Fukuoka 819-0395, Japan}
\email{matsusaka@math.kyushu-u.ac.jp}

\author[]{Kyungbae Park}
\address{Department of Mathematics, Kangwon National University, Chuncheon, 24341, Republic of Korea}
\email{kyungbaepark@kangwon.ac.kr}

%\thanks{\framebox{\today \ \now}}

%\keywords{}
\subjclass[2020]{57K10, 11A55}

%57K10  	Knot theory
%11A55  	Continued fractions

% --------------------------------------------------------------------------

\maketitle

% --------------------------------------------------------------------------
\begin{abstract}
	Closed geodesics associated with indefinite binary quadratic forms, or equivalently with real quadratic irrationals, have long been studied as geometric $\SL_2(\Z)$-invariants. Building on the Birman--Williams approach to Lorenz knots and following the notion of modular knots introduced by Ghys, this article investigates the topological $\SL_2(\Z)$-invariants arising from modular knots. Our main focus is the Alexander polynomial of modular knots. Using the Burau representation, we highlight two contrasting features of this family. On the one hand, for each fixed degree, only finitely many Alexander polynomials of modular knots occur. On the other hand, any integer appears as a coefficient of the Alexander polynomial of some modular knot, and coefficients of the same sign can occur in runs of arbitrarily long length.
%	The explicit formula we obtain enables systematic computations and brings out two contrasting features of this family. On the one hand, for each fixed degree, only finitely many Alexander polynomials of modular knots occur. On the other hand, any integer appears as a coefficient of the Alexander polynomial of some modular knot, and coefficients of the same sign can occur in runs of arbitrary length.
\end{abstract}
% --------------------------------------------------------------------------

%%%%%%%%%%%%%%%%%%%%%%%%%%%%%%%%%
%%%%%%%%%%%%%%%%%%%%%%%%%%%%%%%%%

% --------------------------------------------------------------------------
\section{Introduction}
% --------------------------------------------------------------------------

Since the time of Gauss, invariants of binary quadratic forms under $\SL_2(\Z)$-action have attracted considerable attention. Let 
\[
	\mathcal{Q} = \{Q(x,y) = ax^2 + bxy + cy^2 : a,b,c \in \Z\}
\]
be the set of all integral binary quadratic forms. The group $\SL_2(\Z)$ acts on $\mathcal{Q}$ by
\[
	\left(Q \circ \pmat{\alpha & \beta \\ \gamma & \delta} \right)(x,y) = Q(\alpha x+\beta y, \gamma x+\delta y).
\]
Since the discriminant $\mathrm{disc}(Q) = b^2 - 4ac$ is invariant under this action, for each fixed integer $D \equiv 0, 1 \pmod{4}$ we consider the induced action on the subset $\mathcal{Q}_D \coloneqq \{Q \in \mathcal{Q} : \mathrm{disc}(Q) = D\}$. As Gauss first observed, for a non-square $D$, the quotient $\mathcal{Q}_D/\SL_2(\Z)$ is finite, and by considering the composition of binary quadratic forms, he revealed the underlying group-theoretic structure,
(see Halter-Koch's book~\cite{HalterKoch2013} for details). 

More generally, a map from $\calQ/\SL_2(\Z)$ to a set $X$ is called a \emph{class invariant}. For instance, the discriminant function 
\[
	\calQ/\SL_2(\Z) \to \Z; [Q] \mapsto \mathrm{disc}(Q)
\]
is one such invariant. Here we further restrict our attention to the case of positive quadratic discriminant, namely,
\[
	\calQ_+ = \{Q \in \calQ : \mathrm{disc}(Q) > 0 \text{ is a non-square}\} = \bigcup_{0 < D \neq \square} \calQ_D.
\]
Another classical example is the \emph{closed geodesic} determined by $Q \in \calQ_+$. To be more precise, let $w, w'$ be the two real quadratic irrational roots of $Q(z,1) = 0$, which are Galois conjugate. Consider the geodesic in the upper half-plane $\bbH = \{z \in \C : \Im(z) > 0\}$ equipped with the usual hyperbolic metric, connecting $w'$ and $w$. The group $\SL_2(\Z)$ acts on $\bbH$ by the M\"{o}bius transformations, and the above geodesic projects to a closed geodesic $C_Q$ on the quotient space $\SL_2(\Z) \backslash \bbH$. Then, it is well known that the length of the closed geodesic 
\[
	\calQ_+/\SL_2(\Z) \to \R; [Q] \mapsto \mathrm{length}(C_Q)
\]
can be expressed in terms of the fundamental unit of the real quadratic field $\Q(w)$. Other \emph{geometric} class invariants determined by such geodesics have also been studied more recently by Duke--Imamo\={g}lu--T\'{o}th~\cite{DukeImamogluToth2016}.

In this article, we aim to lift geodesics on $\SL_2(\Z) \backslash \bbH$ to knots in the three-dimensional space $\SL_2(\Z) \backslash \SL_2(\R)$ and study their \emph{topological invariants}. This approach builds on the work of Ghys~\cite{Ghys2006}. He studied the geodesic flow on $\SL_2(\Z) \backslash \SL_2(\R)$, a space known to be homeomorphic to the complement of the trefoil knot $K_{2,3}$ in the three-dimensional sphere $S^3$, (see~\cite[p.84]{Milnor1971}). The closed orbits of this flow are called \emph{modular knots}, and under the projection $\SL_2(\Z) \backslash \SL_2(\R) \to \SL_2(\Z) \backslash \bbH; g \mapsto gi$, they project to the closed geodesics discussed above. Ghys showed that each closed orbit can be described using the symbolic dynamics introduced in the study of Lorenz knots. We adopt this braid description as the definition of modular knots and present it in detail in \cref{sec:modular-knot}. See also the visual introduction by Ghys and Leys~\cite{GhysLeys}. 

As a first example, Ghys himself considered a topological invariant of links, the \emph{linking number}, for the link formed by a single modular knot $\widehat{\mathbb{M}}(Q)$ associated with a class $[Q]$ and the missing trefoil $K_{2,3}$,
\[
	\calQ_+/\SL_2(\Z) \to \Z; [Q] \mapsto \mathrm{link}(\widehat{\mathbb{M}}(Q), K_{2,3}).
\]
Then he showed that it coincides with the \emph{Rademacher symbol}, which appears as a special value of the partial zeta function associated with the class $[Q]$, (see Zagier~\cite[$\S$ 14, Satz 2]{Zagier1981}).

Even when restricted to the study of linking numbers, research has developed in remarkably diverse directions. First, analogues of the prime geodesic theorem that incorporate both the linking number and the length of closed geodesics were studied by Sarnak~\cite{Sarnak2010}, Mozzochi~\cite{Mozzochi2013}, and Kelmer~\cite{Kelmer2012}. More recently, building on Burrin's extension of the Rademacher symbol~\cite{Burrin2022}, Burrin and von Essen~\cite{BurrinEssen2024} further generalized these results to $\Gamma \backslash \SL_2(\R)$ for more general Fuchsian groups $\Gamma$, not necessarily $\SL_2(\Z)$. It is worth noting that, in extending from $\SL_2(\Z)$ to a general $\Gamma$, the Rademacher symbol is not generalized through its description in terms of the partial zeta function mentioned above. Instead, the extension is constructed on the basis of Rademacher's original definition~\cite{Rademacher1956, Rademacher1972} as a class invariant function on $\SL_2(\Z)$ together with several equivalent reformulations summarized by Atiyah~\cite{Atiyah1987}.

Second, in the case where $\Gamma$ is a triangle group, Dehornoy and Pinsky~\cite{Pinsky2014, Dehornoy2015-2, DehornoyPinsky2018} extended the notion of modular knots using the theory of templates, and studied the linking numbers between the modular-like knot and the missing knot intrinsic to the space $\Gamma \backslash \SL_2(\R)$. The arithmetic aspects of the Rademacher symbol for triangle groups have been studied by the second author together with Ueki~\cite{MatsusakaUeki2023} and with Shin~\cite{MatsusakaShin2025}.

Finally, returning to the original setting of $\SL_2(\Z)$, Simon~\cite{Simon2025}, in response to a question raised by Ghys~\cite{Ghys2006}, derived several formulas for the linking number not with the missing trefoil but between two modular knots. From the viewpoint of modular forms, this topic has also been investigated by Duke--Imamo\={g}lu--T\'{o}th~\cite{DukeImamogluToth2017} and the second author~\cite{Matsusaka2024}. Although we do not go into details here, the latter approaches require a suitable modification of the definition of modular knots to avoid linking with the missing trefoil.

In this article, as another topological invariant, we consider the \emph{Alexander polynomial} $\Delta_K(t)$ of a modular knot $K = \widehat{\mathbb{M}}(Q)$, that is,
\[
	\calQ_+/\SL_2(\Z) \to \Z[t, t^{-1}]; [Q] \mapsto \Delta_{\widehat{\mathbb{M}}(Q)}(t).
\]
Here, we use a normalized version so that, for modular knots, it becomes a polynomial in $t$. In \cref{sec:modular-knot} and \cref{sec:Lorenz-braid}, we first give a precise definition of a modular knot. Here, instead of using the geodesic flow, we adopt an equivalent formulation observed by Ghys~\cite{Ghys2006}, in which a real quadratic irrational $w$ is used to define a modular braid from the coefficients of its continued fraction expansion, and the modular knot is then obtained as the closure of this braid, (see also the surveys by Dehornoy~\cite[Section 4]{Dehornoy2011} and Birman~\cite{Birman2013}). 
Building on the work of Birman--Williams~\cite{BirmanWilliams1983} on Lorenz knots, $(p,q)$-torus knots can be realized as modular knots for coprime positive integers $p$ and $q$. In \cref{sec:Torus-knot}, we determine explicitly which real quadratic irrational numbers (or indefinite binary quadratic forms) correspond to a given $(p,q)$-torus knot (\cref{thm:Torus-knot-modular}), by using combinatorial notions such as snake graphs and Christoffel words. In \cref{sec:topological}, we summarize several known results on knot invariants rewritten in the language of modular knots.

We then turn to the Alexander polynomials. In \cref{sec:Alexander}, we review its definition and provide a formula for modular knots (\cref{thm:Burau-explicit}) via the Burau representation. This formula allows for a variety of observations through numerical experiments. In particular, unlike the linking number, it is expected to distinguish modular knots more precisely and to extract more detailed information. In the following two sections, we present several results characterizing families of modular knots that have been discovered through these numerical experiments.

In \cref{sec:Alexander-finite}, although this fact is already known to experts studying Lorenz knots, we show that for each $n$, there are only finitely many Alexander polynomials (up to multiplication by unit elements in $\Z[t,t^{-1}]$) of modular knots of degree $n$. 

\begin{theorem}[\cref{thm:fin-Alexander}]\label{thm-1}
	For a positive integer $n$, we define
	\[	
		A_n = \{\Delta_K(t) : \deg \Delta_K = n, K: \text{modular knot}\} \subset \Z[t].
	\]
	Then we have $A_n = \emptyset$ for every odd $n$, and $\# A_n \le p(n)$ for every even $n$, where $p(n)$ denotes the number of partitions of $n$.
\end{theorem}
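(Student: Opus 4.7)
The plan is to split the argument according to the parity of $n$. For odd $n$, I would use that every modular knot is, by the construction in \cref{sec:Lorenz-braid}, the closure of a positive braid. Such closures are fibered knots (Stallings), and for a fibered knot the Alexander polynomial is, up to units in $\Z[t,t^{-1}]$, the characteristic polynomial of the monodromy acting on the first homology of the fiber; this homology has rank $2g$. Hence $\deg \Delta_K = 2g$ is always even, so $A_n = \emptyset$ for odd $n$.

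For even $n$, I would use the explicit Burau-representation formula of \cref{thm:Burau-explicit}, which expresses $\Delta_K(t)$, up to normalization, as $\det(I - \bar{\rho}(\mathbb{M}(Q)))$ for $\bar{\rho}$ the reduced Burau representation. Because $\mathbb{M}(Q)$ is assembled from the period $(a_1,\ldots,a_k)$ of the purely periodic continued fraction expansion of a quadratic irrational attached to $[Q]$, the matrix $\bar{\rho}(\mathbb{M}(Q))$ factors as a product of Burau matrices indexed by the partial quotients. The argument then rests on two claims. \emph{First}, $n$ coincides (up to a controlled constant) with $a_1 + \cdots + a_k$, via the genus formula $2g = c - s + 1$ applied to the positive braid $\mathbb{M}(Q)$ on $s$ strands with $c$ crossings, since the crossing count reads off the $a_i$'s. \emph{Second}, $\Delta_K(t)$ depends only on the multiset $\{a_1,\ldots,a_k\}$. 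Taken together, these two facts associate to each element of $A_n$ a well-defined partition of $n$, yielding $\#A_n \le p(n)$.

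The principal obstacle is the second claim. Conjugation invariance of the braid closure handles cyclic rotations of the tuple $(a_1,\ldots,a_k)$, but full multiset invariance is strictly stronger and does not follow from knot-theoretic symmetries alone. I would attack it by direct analysis of the Burau product: either by conjugating each partial factor into a common triangular form so that its contribution to $\det(I-\bar{\rho}(\mathbb{M}(Q)))$ becomes a symmetric function of the exponents $a_i$, or by identifying the characteristic polynomial of $\bar{\rho}(\mathbb{M}(Q))$ as a symmetric polynomial in the $a_i$. The remaining ingredients — positivity of the braid, fiberedness, and the genus formula — are standard, and the numerical experiments alluded to in the introduction should provide a useful sanity check that distinct partitions do in fact tend to produce distinct Alexander polynomials.
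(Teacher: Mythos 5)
Your treatment of odd $n$ is correct and is a legitimate alternative to the paper's argument: the paper deduces $A_n=\emptyset$ from the fact that a reciprocal polynomial of odd degree has even value at $t=1$, contradicting $\Delta_K(1)=1$, whereas you invoke fiberedness of positive braid closures and $\deg\Delta_K=2g$. Either route works.

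The even case, however, has a genuine gap, and in fact both of your key claims are problematic. \emph{First}, the degree of the Alexander polynomial is not the sum of the partial quotients up to a controlled constant. By \cref{cor:deg-Alexander} it equals $\sum_{j}P_jq_j-(P_r+Q_r)+1$, i.e.\ crossings minus strands plus one; the number of strands is the sum $\sum_i c_i$ of the partial quotients, but the crossing number $\sum_j P_jq_j$ is a \emph{quadratic} expression in them. Concretely, $\mathbb{L}(2,3)$ has degree $2$ with partial-quotient sum $5$, while $\mathbb{L}(3,4)$ has degree $6$ with sum $7$, so the discrepancy is not constant. \emph{Second}, the multiset-invariance of $\Delta_K$ in the partial quotients is, as you suspect, not something that follows from any symmetry of the construction: permuting the $c_i$ changes the word $L^{c_1}R^{c_2}\cdots$ by more than a cyclic shift and generically changes the knot, and there is no reason for the Burau determinant to be a symmetric function of the exponents. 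Moreover, even if both claims held, you would be counting partitions of $n$ plus a shift, not partitions of $n$. The paper's partition structure enters elsewhere: after using \cref{lem:Markov} to normalize $p_1>1$ and $q_r>1$, the degree formula becomes $n=\sum_{j=1}^{r-1}(P_j-1)q_j+(P_r-1)(q_r-1)$, and setting $m_j=P_j-1$ (strictly increasing and positive) with multiplicities $q_j$ (resp.\ $q_r-1$) exhibits each admissible tuple as a partition of $n$ into parts $m_j$. Since every modular knot of degree $n$ is the closure of at least one such normalized Lorenz braid, and the number of admissible tuples is exactly $p(n)$, one gets $\#A_n\le p(n)$ with no need for any invariance statement about which tuples share an Alexander polynomial. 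I would redirect your effort from proving multiset invariance (which is false) to this counting of tuples via the degree formula.
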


Note that any (resp.~monic) reciprocal polynomial taking the value $1$ at $t=1$ can be realized as the Alexander polynomial of some (resp.~fibered) knot, due to Seifert~\cite{Seifert1935} and Burde~\cite{Burde1966}, respectively. In comparison with these facts, \cref{thm-1} indicates that the family of modular knots is quite small within the set of all (resp.~fibered) knots.

On the other hand, in \cref{sec:Coeff-Alexander}, we show that the family of modular knots is relatively large from two perspectives. For instance, the modular knot $\widehat{\mathbb{M}}(Q)$ associated with $Q(x,y) = 152x^2 - 600xy - 237y^2$ has the braid expression given in \cref{Lorenz-braid-example}, which is repeatedly used as an example throughout this article. Its Alexander polynomial is computed in \cref{ex:Alex-Lorenz} as
\[
	\Delta_{\widehat{\mathbb{M}}(Q)}(t) = t^{20}-t^{19}+t^{17}-t^{16}+t^{14}-t^{13}+t^{12}-t^{11}+t^{10}-t^9+t^8-t^7+t^6-t^4+t^3-t+1.
\]
In many computed examples of Alexander polynomials of modular knots, the coefficients lie in $\{-1, 0, 1\}$ and alternate in sign, a phenomenon that occurs for torus knots and, more generally, for $L$-space knots~\cite{OzsvathSzabo2005}. However, the following theorems show that this behavior does not persist in general.

\begin{theorem}[\cref{thm:coeff-Alexander}]\label{thm-2}
	Every integer appears as a coefficient of the Alexander polynomial of a modular knot.
\end{theorem}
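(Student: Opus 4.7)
The strategy is to exploit the explicit Burau formula of \cref{thm:Burau-explicit}, which expresses the Alexander polynomial of a modular knot as essentially $\det(I - \bar\rho(\beta))/(1+t+\cdots+t^{s-1})$, where $\bar\rho$ denotes the reduced Burau representation and $\beta$ is the modular braid on $s$ strands read off from the continued fraction expansion of a quadratic irrational. The problem then reduces to exhibiting, for each integer, a modular braid whose Burau determinant has that integer as a specified coefficient. Since the example computed in \cref{ex:Alex-Lorenz} lives in $\{-1,0,1\}$, the task is genuinely to manufacture large coefficients via more intricate braids.

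The plan is to introduce a one-parameter family of quadratic forms $Q_n \in \calQ_+$ whose associated real quadratic irrationals have purely periodic continued fraction expansions of a controlled shape, for instance $w_n = [\overline{a_1,\ldots,a_{k-1},n}]$ with a fixed initial block $(a_1,\ldots,a_{k-1})$. Under the correspondence developed in \cref{sec:modular-knot,sec:Lorenz-braid}, the modular braid $\beta_n$ then factors as $\beta_0 \cdot \gamma^n$ for a fixed pair of braid words $\beta_0,\gamma$ depending only on the initial block, so that $\bar\rho(\beta_n) = \bar\rho(\beta_0)\,\bar\rho(\gamma)^n$. By Cayley--Hamilton applied to $\bar\rho(\gamma)$ (or direct diagonalization over $\Q(t)$), every coefficient of $\det(I - \bar\rho(\beta_n)) \in \Z[t]$ is a $\Z[t]$-linear combination of the $n$-th powers of the eigenvalues of $\bar\rho(\gamma)$. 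For generic choices of $(a_1,\ldots,a_{k-1})$ these eigenvalues are algebraic functions of $t$ that are not roots of unity, so the resulting integer coefficient sequences $c_j(n)$ of $\Delta_{K_n}(t) = \sum_j c_j(n) t^j$ grow without bound; one then seeks to show that some particular $c_j(n)$ depends affinely on $n$ with nonzero slope, and hence attains every sufficiently large positive integer. A short finite check handles the small positive values, while the negative values follow either by the reciprocal property of $\Delta_K$ combined with $\Delta_K(1) = \pm 1$ (forcing sign changes that can be redirected into the target coefficient), or by a mirror-image construction, e.g.\ replacing $Q_n$ by a form related through an orientation-reversing transformation of the underlying Lorenz template.

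The main obstacle is the concrete calculation: one must choose $(a_1,\ldots,a_{k-1})$ so that (a) $\beta_n$ closes to a genuine knot (not a multi-component link) for infinitely many $n$, which amounts to controlling the cycle structure of the underlying permutation, (b) the discriminant $\mathrm{disc}(Q_n)$ remains positive and non-square so that $Q_n \in \calQ_+$, and (c) the targeted coefficient has tractable and surjective $n$-dependence rather than suffering unavoidable cancellation. A secondary subtlety is that the denominator $1+t+\cdots+t^{s(n)-1}$ in the Burau formula involves a strand count $s(n)$ that itself may depend on $n$, so one must verify that the division by this cyclotomic factor preserves the growth of the chosen coefficient. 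In practice I would expect the authors to identify a particularly clean family (perhaps with $k=2$) for which all three conditions can be checked by a direct, if somewhat delicate, matrix computation.
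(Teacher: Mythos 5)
Your overall strategy---take a one-parameter family of words whose modular braids have a controlled form, compute the (reduced) Burau representation in closed form, and show that coefficients of the resulting Alexander polynomials depend linearly on the parameter---is exactly the skeleton of the paper's argument: the authors use $W_n = L^{m+1}R^{m+1}LR^{m+1}LRLRL^mR$, reduce $\mathbb{M}(W_n)=\mathbb{L}(2,n;3,4;n+2,2)$ to the $5$-strand braid $\sigma_1^n(\sigma_1\sigma_2\sigma_3\sigma_4)^6\sigma_4^{n+2}$, and read off an explicit polynomial whose coefficients run through $\pm 1,\pm 2,\dots$ up to roughly $\pm n$. However, your proposal stops at the point where the actual mathematics happens, and two of the specific mechanisms you lean on are flawed. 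First, an integer coefficient $c_j(n)$ that is affine in $n$ with nonzero slope attains every sufficiently large positive integer only if the slope is $\pm 1$ \emph{and} $n$ ranges over all integers in an interval; in any such family $n$ is typically constrained (in the paper, to even $n$, to keep the closure a knot), so a single affine coefficient only covers one congruence class. The paper has to combine several coefficients of different parities --- e.g.\ $(n+1)$, $-n$, $(n-2)$, $-(n-4)$ --- and still must choose $n$ as a function of the target value to hit negative odd and positive even integers. A single ``clean coefficient'' will not suffice.

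Second, and more seriously, your proposed routes to negative coefficients do not work. The reciprocity $\Delta_K(1/t)=\pm t^{-d}\Delta_K(t)$ is a palindrome condition on the coefficient vector and creates no sign changes; $\Delta_K(1)=1$ forces the coefficients to sum to $1$, which guarantees that \emph{some} negative coefficients exist when a large positive one does, but gives no control over \emph{which} negative integers occur --- and the theorem requires every negative integer to appear exactly. The mirror-image (or Galois-conjugate/orientation-reversal) construction is a dead end because the Alexander polynomial is invariant under both mirroring and orientation reversal, so it produces no new polynomials at all. In the paper the negative values come directly out of the explicit formula, whose nonzero coefficients alternate in sign with magnitudes growing linearly in $n$; there is no shortcut around exhibiting such a formula. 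Your eigenvalue/Cayley--Hamilton heuristic for ``growth without bound'' also falls short of the statement: unboundedness of coefficients is much weaker than surjectivity onto $\Z$, which is what must be proved.
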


\begin{theorem}[\cref{thm:neg-long}]\label{thm-3}
	For the coefficients of the Alexander polynomial of a modular knot, negative coefficients appear consecutively for an arbitrarily long run.
\end{theorem}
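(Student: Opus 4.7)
My strategy is constructive: for every $N \ge 1$, I aim to exhibit a real quadratic irrational $w_N$ whose associated modular knot $K_N = \widehat{\mathbb{M}}(w_N)$ has at least $N$ consecutive negative coefficients in its Alexander polynomial. The starting point is \cref{thm:Burau-explicit}, which expresses $\Delta_{K_N}(t)$ (up to units in $\Z[t, t^{-1}]$) as a determinant $\det(I - \bar\rho(\beta_N))$, where $\beta_N$ is the modular braid of $w_N$ and $\bar\rho$ denotes the reduced Burau representation.

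The first step is to choose $w_N$ whose eventually periodic continued-fraction expansion contains $N$ repetitions of a fixed short block $[a_1, \ldots, a_r]$. Using the translation of \cref{sec:Lorenz-braid} between continued fractions and Lorenz-type braids, the resulting braid decomposes as $\beta_N = \alpha \cdot \tau^N$ for fixed braid words $\alpha, \tau$. Consequently $\bar\rho(\beta_N) = A \cdot B^N$ for fixed matrices $A$ and $B$ over $\Z[t, t^{-1}]$, and by the Cayley--Hamilton theorem, $B^N$ obeys a linear recursion in $N$ whose coefficients are polynomials in $t$ determined by the characteristic polynomial of $B$. Expanding $\det(I - A \cdot B^N)$ via Leibniz then yields a closed form in $N$ whose $t$-coefficients are geometric-like sums $\sum_{j=0}^{N-1} (\pm t^d)^j$ or linear combinations thereof.

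The second step is to choose the block $[a_1, \ldots, a_r]$ so that, in a prescribed window of powers of $t$ whose width grows linearly with $N$, the dominant contribution is a single unobstructed geometric sum whose coefficients are all negative. A natural starting candidate is the purely periodic expansion $[\overline{M, 1, 1, \ldots, 1}]$ with $N$ ones, where $M$ is chosen large enough that the repeated block produces a nearly triangular transfer matrix $B$. If a suitable eigenvalue of $B$ contributes a factor of the form $(1 - (-t)^N)/(1 + t)$ to a specific minor, the negative coefficients of this geometric series translate directly to a long run of negative coefficients in $\Delta_{K_N}(t)$ at degrees $d, d+1, \ldots, d + N - 1$ for some $d$ depending on $N$ and the choice of block.

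The hard part will be controlling cancellations: the determinant $\det(I - A \cdot B^N)$ aggregates contributions from many minors, and without careful design these could interfere with the intended negative run. Overcoming this requires selecting the period-block so that, in the target window of degrees, all contributions come from a single dominant geometric sum while the remaining ``noise'' is supported at degrees well outside this window. If the direct computation proves intractable, a softer alternative is to combine \cref{thm:coeff-Alexander} with a satellite-type operation internal to the class of modular knots: if one can construct such an operation which replaces a single coefficient of the companion's Alexander polynomial with a block of $N$ adjacent coefficients of the same sign, then the theorem follows from the existence of arbitrarily large negative coefficients. The subtlety is to realize such a satellite move purely through modifications of the underlying binary quadratic form.
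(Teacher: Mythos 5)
Your proposal is a strategy outline in which every substantive step is deferred, so as it stands there is no proof. Three concrete gaps. (i) No family is actually fixed. The one candidate you float, $w=[\overline{M,1,1,\dots,1}]$, corresponds to words of the form $L^{M-1}(LR)^{\ell}$; for $M=2$ these are literally the torus-knot words $L(LR)^{\ell}=\mathrm{ch}_{\ell/(\ell+1)}(L,LR)$, and torus knots (and $L$-space knots generally) have Alexander coefficients in $\{-1,0,1\}$ that alternate in sign --- precisely the behaviour the theorem has to defeat. So your ``natural starting candidate'' lies in, or right next to, the family where the phenomenon provably fails, and you give no argument that increasing $M$ escapes it. (ii) The mechanism you propose for generating the negative run is internally inconsistent: the factor $(1-(-t)^N)/(1+t)=\sum_{j=0}^{N-1}(-t)^j$ has coefficients alternating between $+1$ and $-1$, not a run of negatives. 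To produce $N$ consecutive equal-sign coefficients you would need a contribution like $-(1-t^N)/(1-t)=-(1+t+\cdots+t^{N-1})$, and nothing in your setup manufactures that. (iii) The cancellation control that you label ``the hard part'' is the entire proof; and the fallback via a ``satellite-type operation internal to the class of modular knots'' is not a known construction (Lorenz/modular knots are not closed under general satellite operations), so it is not a reduction to anything checkable.

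For comparison, the paper's proof does commit to a family and executes the bookkeeping you postpone: it takes $W_n=(LLR)^n(LR)^nR$, so that $\mathbb{M}(W_n)=\mathbb{L}(n,n;n+1,2n)$, identifies the closure with a twisted torus knot via the $T$-braid correspondence of \cref{rem:T-braid}, imports the closed formula of Park--Adnan for its Alexander polynomial, and then analyzes the coefficients of $t^{2n^2+k}$ for $1\le k\le (2n+1)/3$ (under $4\le n\equiv 1\pmod 3$) by solving three exponent equations in the summation indices; the outcome is that each such coefficient equals $0$, $-1$, or $-2$ according to $k\bmod 6$, which yields arbitrarily long runs of negative coefficients among the nonzero ones. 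Your plan is compatible with this in spirit --- a one-parameter family with a repeated block, a closed form, and a window analysis --- but until you fix a family that demonstrably leaves the alternating-sign regime and carry out the coefficient computation, the theorem is not established.
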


\cref{thm-2} serves as an analogue of Schur and Suzuki's theorem~\cite{Suzuki1987} for cyclotomic polynomials. \cref{thm-2} and \cref{thm-3} contrast sharply with the situation for torus knots and $L$-space knots. This indicates that modular knots form a considerably large and more varied class. 
Finally, although we do not discuss it in detail here, we mention a few related results. Dehornoy~\cite{Dehornoy2015} investigated the Alexander polynomial of Lorenz knots (which coincide with modular knots) and showed that their zeros concentrate on a certain annulus. This result characterizes the family of Lorenz knots within the set of all knots. Dehornoy~\cite[Section 4]{Dehornoy2011} also examined Gauss' composition and gave several observations on when a modular knot becomes the trivial knot.

% --------------------------------------------------------------------------
\section*{Acknowledgements}
% --------------------------------------------------------------------------

The first author was supported by the National Research Foundation of Korea (NRF) funded by the Ministry of Education (NRF-2022R1A2C1007188),
the second author was supported by JSPS KAKENHI (JP21K18141 and JP24K16901), and 
the third author was supported by the NRF grant funded by the Korea government
(MSIT) (RS-2025-24523511).
In addition, all three authors were supported by a research project carried out by KRIMS, supported by the NRF funded by the Ministry of Education (RS-2025-25415913).

% --------------------------------------------------------------------------
\section{Modular braids and modular knots} \label{sec:modular-knot}
% --------------------------------------------------------------------------

In the following two sections, building on Ghys' idea in \cite[Section 3.5]{Ghys2006}, we introduce modular knots within the framework of the Birman--Williams description of Lorenz knots via symbolic dynamics~\cite{BirmanWilliams1983}. Specifically, in this section, we define the modular braids associated with a binary quadratic form class $[Q]$ (\cref{def:modular-knot-Ber}) and their reformulation in terms of words (\cref{def:modular-knot-word}), and we show in \cref{prop:two-modular-braids} that these definitions are equivalent.

% --------------------------------------------------------------------------
\subsection{Braid groups}
% --------------------------------------------------------------------------

To begin with, we briefly review the braid group, which forms the foundation for the definitions of modular knots and the Alexander polynomials. For details, see Kassel--Turaev~\cite{KasselTuraev2008} for instance.

\begin{definition}
	For a positive integer $n$, the \emph{Braid group} $B_n$ is the group generated by $\sigma_1, \sigma_2, \dots, \sigma_{n-1}$ satisfying the \emph{braid relations}
	\[
		\sigma_i \sigma_j = \sigma_j \sigma_i
	\]
	for all $1 \le i, j \le n-1$ with $|i-j| \ge 2$, and 
	\[
		\sigma_i \sigma_{i+1} \sigma_i = \sigma_{i+1} \sigma_i \sigma_{i+1}
	\]
	for all $1 \le i \le n-2$. An element $\sigma \in B_n$ is called a \emph{braid}.
\end{definition}

An element $\sigma \in B_n$ can be represented by a braid diagram with $n$ strands, where each generator $\sigma_i$ denotes a crossing between the $i$-th and $(i+1)$-th strands.

\begin{figure}[H]
	\centering
	\begin{tikzpicture}[line width=1pt, scale=0.4]
	\begin{scope}[shift={(0,0)}]
	% lower and upper lines
		\draw (0,0) -- (12,0);
		\draw (0,4) -- (12,4);

		\def \dotPt {4pt}; 

	% dots on the lower line
		\fill (1,0) circle (\dotPt);
		\fill (3,0) circle (\dotPt);
		\fill (5,0) circle (\dotPt);
		\fill (7,0) circle (\dotPt);
		\fill (9,0) circle (\dotPt);
		\fill (11,0) circle (\dotPt);

	% dots on the lower line
		\fill (1,4) circle (\dotPt);
		\fill (3,4) circle (\dotPt);
		\fill (5,4) circle (\dotPt);
		\fill (7,4) circle (\dotPt);
		\fill (9,4) circle (\dotPt);
		\fill (11,4) circle (\dotPt);

	% vertical lines
		\draw (1,0) -- (1,4);
		\draw (3,0) -- (3,4);
		\draw (9,0) -- (9,4);
		\draw (11,0) -- (11,4);
		\draw (5,0) -- (7,4);
		\draw[white, line width=8pt] (7-0.25,0+0.5) -- (5+0.25,4-0.5); 
		\draw (7,0) -- (5,4);
	
	% texts
		\node at (6,-1) {$\sigma_i$};
		\node at (1,5) {$1$};
		\node at (11,5) {$n$};
		\node at (5,5) {$i$};
		\node at (7,5) {$i+1$};
		\node at (2, 2) {$\dots$};
		\node at (10, 2) {$\dots$};
	\end{scope}

	\begin{scope}[shift={(14,0)}]
	% lower and upper lines
		\draw (0,0) -- (12,0);
		\draw (0,4) -- (12,4);

		\def \dotPt {4pt}; 

	% dots on the lower line
		\fill (1,0) circle (\dotPt);
		\fill (3,0) circle (\dotPt);
		\fill (5,0) circle (\dotPt);
		\fill (7,0) circle (\dotPt);
		\fill (9,0) circle (\dotPt);
		\fill (11,0) circle (\dotPt);
	
	% dots on the lower line
		\fill (1,4) circle (\dotPt);
		\fill (3,4) circle (\dotPt);
		\fill (5,4) circle (\dotPt);
		\fill (7,4) circle (\dotPt);
		\fill (9,4) circle (\dotPt);
		\fill (11,4) circle (\dotPt);

	% vertical lines
		\draw (1,0) -- (1,4);
		\draw (3,0) -- (3,4);
		\draw (9,0) -- (9,4);
		\draw (11,0) -- (11,4);
		\draw (7,0) -- (5,4);
		\draw[white, line width=8pt] (5+0.25,0+0.5) -- (7-0.25,4-0.5); 
		\draw (5,0) -- (7,4);
	
	% texts
		\node at (6,-1) {$\sigma_i^{-1}$};
		\node at (1,5) {$1$};
		\node at (11,5) {$n$};
		\node at (5,5) {$i$};
		\node at (7,5) {$i+1$};
		\node at (2, 2) {$\dots$};
		\node at (10, 2) {$\dots$};
	\end{scope}

	\end{tikzpicture}
	\caption{Braid diagrams for generators $\sigma_i, \sigma_i^{-1}$.}
	\label{}
\end{figure}
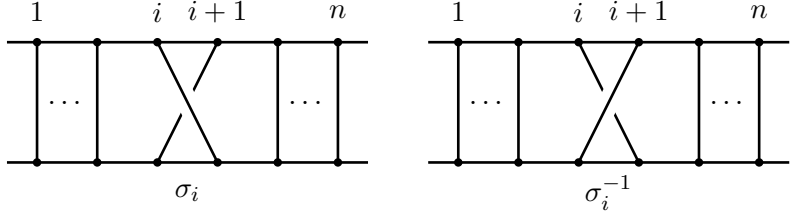

Multiplication in the braid group corresponds to concatenating diagrams from the top to the bottom, and the braid relations correspond to continuous deformations of the diagrams.
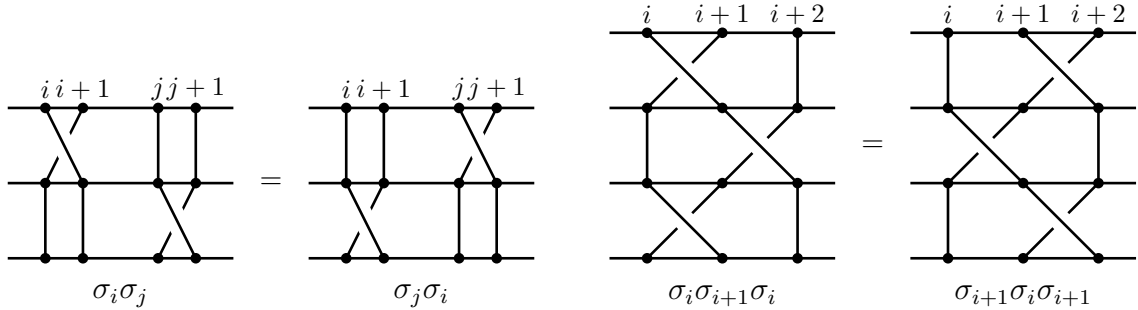
\begin{figure}[H]
	\centering
	\begin{tikzpicture}[line width=1pt, scale=0.5]
	\begin{scope}[shift={(0,0)}]
	% lower and upper lines
		\draw (0,0) -- (6,0);
		\draw (0,2) -- (6,2);
		\draw (0,4) -- (6,4);
	
		\def \dotPt {4pt}; 

	% dots on the lower line
		\fill (1,0) circle (\dotPt);
		\fill (2,0) circle (\dotPt);
		\fill (4,0) circle (\dotPt);
		\fill (5,0) circle (\dotPt);

	% dots on the middle line
		\fill (1,2) circle (\dotPt);
		\fill (2,2) circle (\dotPt);
		\fill (4,2) circle (\dotPt);
		\fill (5,2) circle (\dotPt);
	
	% dots on the upper line
		\fill (1,4) circle (\dotPt);
		\fill (2,4) circle (\dotPt);
		\fill (4,4) circle (\dotPt);
		\fill (5,4) circle (\dotPt);

	% vertical lines
		\draw (1,0) -- (1,2);
		\draw (2,0) -- (2,2);
		\draw (4,2) -- (4,4);
		\draw (5,2) -- (5,4);	
		\draw (4,0) -- (5,2);
		\draw[white, line width=8pt] (5-0.25,0+0.5) -- (4+0.25,2-0.5); 
		\draw (5,0) -- (4,2);
		\draw (1,2) -- (2,4);
		\draw[white, line width=8pt] (2-0.25,2+0.5) -- (1+0.25,4-0.5); 
		\draw (2,2) -- (1,4);
	
	% texts
		\node at (3,-1) {\large $\sigma_i \sigma_j$};
		\node at (1,4.5) {\small $i$};
		\node at (2,4.5) {\small $i+1$};
		\node at (4,4.5) {\small $j$};
		\node at (5,4.5) {\small $j+1$};
		\node at (7,2) {$=$};
	\end{scope}

	\begin{scope}[shift={(8,0)}]
	% lower and upper lines
		\draw (0,0) -- (6,0);
		\draw (0,2) -- (6,2);
		\draw (0,4) -- (6,4);

		\def \dotPt {4pt}; 

	% dots on the lower line
		\fill (1,0) circle (\dotPt);
		\fill (2,0) circle (\dotPt);
		\fill (4,0) circle (\dotPt);
		\fill (5,0) circle (\dotPt);

	% dots on the middle line
		\fill (1,2) circle (\dotPt);
		\fill (2,2) circle (\dotPt);
		\fill (4,2) circle (\dotPt);
		\fill (5,2) circle (\dotPt);
	
	% dots on the upper line
		\fill (1,4) circle (\dotPt);
		\fill (2,4) circle (\dotPt);
		\fill (4,4) circle (\dotPt);
		\fill (5,4) circle (\dotPt);

	% vertical lines
		\draw (1,2) -- (1,4);
		\draw (2,2) -- (2,4);
		\draw (4,0) -- (4,2);
		\draw (5,0) -- (5,2);
		
		\draw (4,2) -- (5,4);
		\draw[white, line width=8pt] (5-0.25,2+0.5) -- (4+0.25,4-0.5); 
		\draw (5,2) -- (4,4);
		\draw (1,0) -- (2,2);
		\draw[white, line width=8pt] (2-0.25,0+0.5) -- (1+0.25,2-0.5); 
		\draw (2,0) -- (1,2);
	
	% texts
		\node at (3,-1) {\large $\sigma_j \sigma_i$};
		\node at (1,4.5) {\small $i$};
		\node at (2,4.5) {\small $i+1$};
		\node at (4,4.5) {\small $j$};
		\node at (5,4.5) {\small $j+1$};
	\end{scope}

	\begin{scope}[shift={(16,0)}]
	% lower and upper lines
		\draw (0,0) -- (6,0);
		\draw (0,2) -- (6,2);
		\draw (0,4) -- (6,4);
		\draw (0,6) -- (6,6);
		
		\def \dotPt {4pt}; 
	
	% dots
		\fill (1,0) circle (\dotPt);
		\fill (3,0) circle (\dotPt);
		\fill (5,0) circle (\dotPt);
	
		\fill (1,2) circle (\dotPt);
		\fill (3,2) circle (\dotPt);
		\fill (5,2) circle (\dotPt);
		
		\fill (1,4) circle (\dotPt);
		\fill (3,4) circle (\dotPt);
		\fill (5,4) circle (\dotPt);
		
		\fill (1,6) circle (\dotPt);
		\fill (3,6) circle (\dotPt);
		\fill (5,6) circle (\dotPt);
	
	% lines
		\draw (1,2) -- (1,4);
		\draw (5,0) -- (5,2);
		\draw (5,4) -- (5,6);
		
		\draw (1,0) -- (3,2);
		\draw[white, line width=8pt] (3-0.5,0+0.5) -- (1+0.5,2-0.5); 
		\draw (3,0) -- (1,2);
	
		\draw (1,4) -- (3,6);
		\draw[white, line width=8pt] (3-0.5,4+0.5) -- (1+0.5,6-0.5); 
		\draw (3,4) -- (1,6);
		
		\draw (3,2) -- (5,4);
		\draw[white, line width=8pt] (5-0.5,2+0.5) -- (3+0.5,4-0.5); 
		\draw (5,2) -- (3,4);
		
	% texts
		\node at (3,-1) {\large $\sigma_i \sigma_{i+1} \sigma_i$};
		\node at (1,6.5) {\small $i$};
		\node at (3,6.5) {\small $i+1$};
		\node at (5,6.5) {\small $i+2$};
		\node at (7,3) {$=$};
	\end{scope}

	\begin{scope}[shift={(24,0)}]
	% lower and upper lines
		\draw (0,0) -- (6,0);
		\draw (0,2) -- (6,2);
		\draw (0,4) -- (6,4);
		\draw (0,6) -- (6,6);
	
		\def \dotPt {4pt}; 

	% dots
		\fill (1,0) circle (\dotPt);
		\fill (3,0) circle (\dotPt);
		\fill (5,0) circle (\dotPt);
	
		\fill (1,2) circle (\dotPt);
		\fill (3,2) circle (\dotPt);
		\fill (5,2) circle (\dotPt);
		
		\fill (1,4) circle (\dotPt);
		\fill (3,4) circle (\dotPt);
		\fill (5,4) circle (\dotPt);
		
		\fill (1,6) circle (\dotPt);
		\fill (3,6) circle (\dotPt);
		\fill (5,6) circle (\dotPt);
	
	% lines
		\draw (1,0) -- (1,2);
		\draw (1,4) -- (1,6);
		\draw (5,2) -- (5,4);
		
		\draw (1,2) -- (3,4);
		\draw[white, line width=8pt] (3-0.5,2+0.5) -- (1+0.5,4-0.5); 
		\draw (3,2) -- (1,4);
	
		\draw (3,4) -- (5,6);
		\draw[white, line width=8pt] (5-0.5,4+0.5) -- (3+0.5,6-0.5); 
		\draw (5,4) -- (3,6);
	
		\draw (3,0) -- (5,2);
		\draw[white, line width=8pt] (5-0.5,0+0.5) -- (3+0.5,2-0.5); 
		\draw (5,0) -- (3,2);
		
	% texts
		\node at (3,-1) {\large $\sigma_{i+1} \sigma_i \sigma_{i+1}$};
		\node at (1,6.5) {\small $i$};
		\node at (3,6.5) {\small $i+1$};
		\node at (5,6.5) {\small $i+2$};
	\end{scope}

	\end{tikzpicture}
	\caption{Braid diagrams for the braid relations.}
	\label{}
\end{figure}

For each braid $\sigma \in B_n$, its \emph{closure} $\widehat{\sigma}$ is obtained by connecting the $i$-th top endpoint to the $i$-th bottom endpoint for all $1 \le i \le n$, as illustrated in the figure. This generally produces a link, which consists of one or more knots.
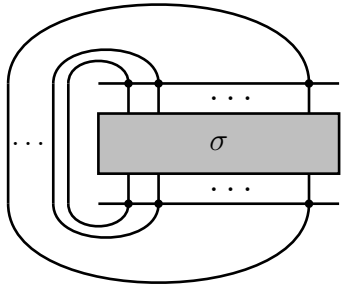
\begin{figure}[H]
\centering
\begin{tikzpicture}[line width=1pt, scale=0.4]
\begin{scope}[shift={(0,0)}]
% lower and upper lines
	\draw  (0,0) -- (8,0);
	\draw  (0,4) -- (8,4);

	\def \dotPt {4pt}; 

% dots on the lower line
	\fill (1,0) circle (\dotPt);
	\fill (2,0) circle (\dotPt);
	\fill (7,0) circle (\dotPt);

% dots on the lower line
	\fill (1,4) circle (\dotPt);
	\fill (2,4) circle (\dotPt);
	\fill (7,4) circle (\dotPt);

% vertical lines
	\draw (1,0) -- (1,4);
	\draw (2,0) -- (2,4);
	\draw (7,0) -- (7,4);
	
% Curves below
	% The "controls" determine the tangent directions at the beginning and end of the curve.
	\draw (1,0) .. controls +(0,-1) and +(0,-1) .. (-1,0);
	\draw (2,0) .. controls +(0,-1.5) and +(0,-1.5) .. (-1.5,0);
	\draw (7,0) ..  controls +(0,-3.5) and +(0,-3.5) .. (-3,0);
	
% Curves above
	\draw (1,4) .. controls +(0, 1) and +(0, 1) .. (-1,4);
	\draw (2,4) .. controls +(0,1.5) and +(0,1.5) .. (-1.5,4);
	\draw (7,4) ..  controls +(0, 3.5) and +(0,3.5) .. (-3,4);
	
% Vertical lines left
	\draw (-1, 0) -- (-1,4);
	\draw (-1.5, 0) -- (-1.5, 4);
	\draw (-3, 0) -- (-3, 4);

% rectangle
	\draw[fill=lightgray, draw=black] (0,1) rectangle (8,3);
	
	\node at (4, 2) {\large $\sigma$};
	\node at (4.5, 3.5) {\Large $\dots$};
	\node at (4.5, 0.5) {\Large $\dots$};
	\node at (-2.25, 2) {$\dots$};
\end{scope}
\end{tikzpicture}
\caption{The closure of a braid.}
\label{}
\end{figure}

\begin{definition}
	A braid $\sigma \in B_n$ is called a \emph{positive braid} if it can be expressed as a product of positive powers of the generators $\sigma_j$. 
\end{definition}

% --------------------------------------------------------------------------
\subsection{Continued fraction expansions}
% --------------------------------------------------------------------------

For a positive (non-square) discriminant $D$ and a quadratic form $Q(x,y) = ax^2+ bxy + cy^2 \in \mathcal{Q}_D$, we associate one of its roots, a real quadratic irrational,
\[
	w_Q = \frac{-b + \sqrt{D}}{2a}.
\]
For a real number $x$, we consider the (regular) \emph{continued fraction expansion}
\[
	x = a_1 + \cfrac{1}{a_2 + \cfrac{1}{a_3 + \cfrac{1}{\ddots}}} = [a_1, a_2, a_3, \dots],
\]
where $a_1 \in \Z$ and $a_j \in \Z_{>0}$ for $j \ge 2$. It is known that the continued fraction expansion of a real number is eventually periodic if and only if the number is a real quadratic irrational, (see~\cite[Theorem 1.17]{Aigner2013}). Hence, we can write
\begin{align}\label{eq:cont-frac-wQ}
	w_Q = [a_1, \dots, a_{2k}, \overline{c_1, c_2, \dots, c_{2\ell}}]
\end{align}
with even $2k$ and minimal even period $2\ell$. Here, the term ``minimal even period" means the following. Let $(c_1, \dots, c_m)$ be the minimal period of the continued fraction of $w_Q$. If $m$ is even, we set $2\ell = m$. If $m$ is odd, we use the doubled block $(c_1, \dots, c_m, c_1, \dots, c_m)$ as its period and set $2\ell = 2m$.

\begin{example}
	For a quadratic form $Q(x,y) = x^2 - 3xy + y^2$, we have
	\[
		w_Q = \frac{3 + \sqrt{5}}{2} = [2, 1, \overline{1,1}].
	\]
	Note that, since we assume both $2k$ and $2\ell$ are even, we do not adopt expansions such as $[2, \overline{1}]$. Similarly, for $Q(x,y) = 5x^2-6xy-3y^2$, we have
	\begin{align}\label{eq:ex-wQ-cont}
		w_Q = \frac{3 + 2\sqrt{6}}{5} = [\overline{1,1,1,2}] = [1,1, \overline{1,2,1,1}].
	\end{align}
	Here, since no minimality condition is imposed on the non-periodic part $(a_1, \dots, a_{2k})$, some arbitrariness remains, as seen in this example.
\end{example}

Let $\{L, R\}^*_\mathrm{prim}$ denote the set of all finite-length primitive words over the two-letter alphabet $\{L, R\}$, where a word is called \emph{primitive} if it cannot be written as a repetition of a shorter word. 
We define an equivalence relation on this set by cyclic permutations and denote the corresponding quotient set by $\{L, R\}^*_\mathrm{prim}/{\sim}$. This gives rise to the following map.

\begin{lemma}\label{lem:cont-word-corr}
	The map $W: \calQ_+/\SL_2(\Z) \to \{L, R\}^*_\mathrm{prim}/{\sim}$ defined by
	\[
		[Q] \mapsto W(Q) = L^{c_1} R^{c_2} \cdots L^{c_{2\ell-1}} R^{c_{2\ell}}
	\]
	is well-defined, where $c_1, \dots, c_{2\ell}$ are defined as in~\eqref{eq:cont-frac-wQ}. Moreover, this map is surjective onto all elements except for the classes of primitive words of length one, $L$ and $R$.
\end{lemma}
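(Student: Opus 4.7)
The plan is to verify well-definedness (independence from the ambiguity in~\eqref{eq:cont-frac-wQ}, $\SL_2(\Z)$-invariance, and primitivity of the image) and then to realize each primitive word of length $\ge 2$ as $W(Q)$ for some $Q$. For a fixed $Q$, the ambiguity in the choice of the non-periodic prefix $(a_1, \ldots, a_{2k})$ amounts to absorbing successive periodic terms into it, which shifts the periodic block cyclically; different valid expansions of $w_Q$ therefore produce cyclically equivalent words. For the $\SL_2(\Z)$-invariance, if $Q' = Q \circ \gamma$ then $w_{Q'} = \gamma^{-1}(w_Q)$ under the M\"obius action, and by Serret's classical theorem that $\SL_2(\Z)$-equivalent real irrationals share a common continued-fraction tail, their minimal even periods coincide up to cyclic rotation. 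To see primitivity of $W(Q)$, suppose $W(Q) = u^j$ with $j \ge 2$; the strict alternation $L, R, L, R, \ldots, R$ of block-letters in $W(Q)$ (starting with $L$, ending with $R$) forces $u$ itself to start with $L$, end with $R$, and have an even number $2s$ of blocks with $sj = \ell$. Writing $u = L^{d_1} R^{d_2} \cdots R^{d_{2s}}$, the identity $(c_1, \ldots, c_{2\ell}) = (d_1, \ldots, d_{2s})^j$ exhibits $(d_1, \ldots, d_{2s})$ as an even period of length $2s < 2\ell$, contradicting the minimality stipulated in~\eqref{eq:cont-frac-wQ}.

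For surjectivity, a primitive word of length $\ge 2$ must contain both $L$ and $R$ (since $L^n$ and $R^n$ are primitive only for $n=1$), so after a cyclic shift I may assume $v = L^{c_1} R^{c_2} \cdots L^{c_{2k-1}} R^{c_{2k}}$ with each $c_i \ge 1$ and $k \ge 1$. I then set $w = [\overline{c_1, c_2, \ldots, c_{2k}}]$, a real quadratic irrational by pure periodicity, choose $Q \in \calQ_+$ with $w_Q = w$, and let $m$ denote the minimal period of the continued fraction of $w$, so $m \mid 2k$.

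The technical heart is to exclude $m < 2k$ by a case analysis on the parity of $m$. If $m$ is even with $m < 2k$, then $(c_1, \ldots, c_{2k}) = (c_1, \ldots, c_m)^{2k/m}$ gives $v = (L^{c_1} R^{c_2} \cdots R^{c_m})^{2k/m}$, contradicting primitivity of $v$. If $m$ is odd with $m < 2k$, then $m \mid k$ and the minimal even period of $w$ is $2m$; using $c_{m+i} = c_i$, a direct computation yields $W(Q) = L^{c_1} R^{c_2} \cdots L^{c_m} R^{c_1} L^{c_2} \cdots R^{c_m}$, a word of $2m$ blocks, and $v$ equals its $(k/m)$-th power, so primitivity of $v$ forces $k = m$ and hence $W(Q) = v$. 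The remaining case $m = 2k$ gives $W(Q) = v$ directly. The main subtlety is the odd-period case, where the parity of the continued-fraction period interacts nontrivially with the forced alternation of $L$ and $R$ in the construction of $W(Q)$.
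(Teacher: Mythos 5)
Your proposal is correct, and on two points it is more careful than the paper's own argument. For well-definedness both you and the paper reduce to the classical equivalence theorem ($\SL_2(\Z)$-equivalent quadratic irrationals have, in expansions with even preperiod, the same minimal even period up to an even cyclic shift); the paper cites Hurwitz for the precise $\SL_2(\Z)$-form, and you should note that the bare ``common tail'' version of Serret's theorem characterizes $\GL_2(\Z)$-equivalence --- the refinement you actually need is that the index shift between the coinciding tails is even, which is exactly what makes the cyclic shift of the periodic block even and hence harmless for the word class. You additionally verify that $W(Q)$ is primitive (so the map really lands in $\{L,R\}^*_{\mathrm{prim}}/{\sim}$), a point the paper's proof passes over in silence; your block-counting argument reducing non-primitivity to an even period shorter than $2\ell$ is correct. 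For surjectivity the routes genuinely differ: the paper constructs $\gamma = T^{c_1}V^{c_2}\cdots T^{c_{2\ell-1}}V^{c_{2\ell}}$ and writes down an explicit $Q$ whose root is the fixed point $[\overline{c_1,\dots,c_{2\ell}}]$, but it does not address whether $(c_1,\dots,c_{2\ell})$ is actually the \emph{minimal} even period of that root, which is what the definition of $W$ requires; your case analysis on the minimal period $m$ (even $m<2k$ contradicts primitivity of $v$; odd $m<2k$ forces $k=m$ via primitivity and still yields $W(Q)=v$; $m=2k$ is immediate) closes exactly this gap, at the cost of not exhibiting $Q$ explicitly. The only step you leave implicit is the existence of $Q \in \calQ_+$ with $w_Q = w$ for a purely periodic $w$: this needs Galois's theorem that such $w$ satisfies $w>1>0>w'>-1$, so $w$ is the larger root and one may take the leading coefficient positive; the paper's matrix construction supplies this $Q$ concretely. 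Neither issue is a genuine gap --- both are standard facts --- so the proof stands.
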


\begin{proof}
	The well-definedness follows from the relation $w_{Q \circ \gamma} = \gamma^{-1} w_Q$ (which can be verified by direct calculation), together with the fact that two real quadratic irrationals are $\SL_2(\Z)$-equivalent if and only if, when expanded in the form of \eqref{eq:cont-frac-wQ}, they share the same periodic part $(c_1, \dots, c_{2\ell})$ in their continued fraction expansions (a result due to Hurwitz~\cite[p.434]{Hurwitz1894}, see also~\cite[Appendix]{Reutenauer2019}). Note that in \eqref{eq:cont-frac-wQ}, the non-periodic part $(a_1, \dots, a_{2k})$ can be chosen arbitrarily, which accounts for the ambiguity corresponding to an even cyclic shift of the periodic part. 
	
	As for the surjectivity excluding $\{L, R\}$, note first that any primitive word of length at least $2$ can be written in the form $L^{c_1} R^{c_2} \cdots L^{c_{2\ell-1}} R^{c_{2\ell}}$ after a suitable cyclic permutation. Then, for matrices $T = \smat{1 & 1 \\ 0 & 1}, V = \smat{1 & 0 \\ 1 & 1} \in \SL_2(\Z)$, we consider the matrix $\gamma = \smat{a & b \\ c & d} = T^{c_1} V^{c_2} \cdots T^{c_{2\ell-1}} V^{c_{2\ell}}$ and define a quadratic form
	\[
		Q(x,y) = cx^2 + (d-a)xy - by^2 \in \calQ_+.
	\]
	Since $c > 0$, its root $w_Q$ is the larger fixed point of $\gamma$, that is, $w_Q = [\overline{c_1, c_2, \dots, c_{2\ell}}]$. Therefore, for this $Q$, the word $W(Q)$ coincides with the given word.
\end{proof}

The Lyndon words defined below are commonly used as a canonical system of representatives for $\{L, R\}^*_\mathrm{prim}/{\sim}$.

\begin{definition}
	A word $W \in \{L,R\}_\mathrm{prim}^*$ is called a \emph{Lyndon word} if every nontrivial cyclic permutation of $W$ is strictly greater than $W$ in the lexicographic order with $L < R$.
\end{definition}

\begin{example}\label{ex:Q365}
	For $Q(x,y) = 5x^2 - 6xy -3y^2$, the continued fraction expansion in \eqref{eq:ex-wQ-cont} and the map in \cref{lem:cont-word-corr} yield the primitive word $L R L R^2$ or $L R^2 L R$. Although there is some arbitrariness depending on which the continued fraction expansion is chosen, we ignore differences up to cyclic permutations, so these words define the same class in $\{L, R\}^*_\mathrm{prim}/{\sim}$. Among the words in this class, the minimal element in lexicographic order is $L R L R^2$. This is the Lyndon word and serves as the canonical representative of the class.
\end{example}

% --------------------------------------------------------------------------
\subsection{Modular knots using the Bernoulli shift}
% --------------------------------------------------------------------------

We give two equivalent definitions of modular knots. The first uses the Bernoulli shift, and the second simply reformulates the first definition in terms of words. Our exposition here is based on the survey by Dehornoy~\cite{Dehornoy2011}. First, the Bernoulli shift is the following discrete dynamical system, which arises as a simple example of chaotic dynamics.

\begin{definition}
	The \emph{Bernoulli shift} is a discrete dynamical system on $[0,1]$, defined by the map $\mathrm{Ber}: [0,1] \to [0,1]$ with $\mathrm{Ber}(x) = 2x \pmod{1}$.
\end{definition}

For any rational number in $[0,1]$, the Bernoulli shift produces a periodic orbit. For example, taking $x = 11/31$ as the initial value, it gives the orbit
\[
	\frac{11}{31} \to \frac{22}{31} \to \frac{13}{31} \to \frac{26}{31} \to \frac{21}{31} \to \frac{11}{31} \to \cdots
\]
with period $5$. 

We now explain how to construct a braid diagram from such a periodic orbit. Place the interval $[0,1]$ at both the top and the bottom of the diagram. For each point $x \in [0,1]$ in the periodic orbit, connect the point $x$ at the top to $\mathrm{Ber}(x)$ at the bottom with a line segment. Repeating this for all points produces a braid diagram with as many strands as the period of the orbit. By the definition of the Bernoulli shift, segments starting from $[0,1/2]$ do not cross each other, and similarly for $[1/2, 1]$. Whenever the two segments cross, the segment from $[0,1/2]$ passes over the one from $[1/2, 1]$, yielding a positive braid crossing.
The following figure illustrates the case of $x = 11/31$.
\begin{figure}[H]
	\centering
	\begin{tikzpicture}[line width=1pt, scale=0.5]
	\begin{scope}[shift={(0,0)}]
	% lower and upper lines
		\draw (0,0) -- (10,0);
		\draw (0,4) -- (10,4);

		\def \dotPt {4pt}; 

	% dots on the lower line
		\fill (110/31,4) circle (\dotPt);
		\fill (130/31,4) circle (\dotPt);
		\fill (210/31,4) circle (\dotPt);
		\fill (220/31,4) circle (\dotPt);
		\fill (260/31,4) circle (\dotPt);

	% lines
		\draw (210/31,4) -- (110/31,0);
		\draw (220/31,4) -- (130/31,0);
		\draw (260/31,4) -- (210/31,0);
	
		\draw[white, line width=4pt] (220/31-1/8, 31/220) -- (110/31+1/2,189/55); 
		\draw (220/31,0) -- (110/31,4);
		\draw[white, line width=4pt] (260/31-0.5, 31/65) -- (130/31+0.5,229/65); 
		\draw (260/31,0) -- (130/31,4);

	% dots on the lower line
		\fill (110/31,0) circle (\dotPt);
		\fill (130/31,0) circle (\dotPt);
		\fill (210/31,0) circle (\dotPt);
		\fill (220/31,0) circle (\dotPt);
		\fill (260/31,0) circle (\dotPt);
	
	% texts
		\node at (110/31-0.2,5) {\small $\frac{11}{31}$};
		\node at (130/31,5) {\small $\frac{13}{31}$};
		\node at (210/31-0.5,5) {\small $\frac{21}{31}$};
		\node at (220/31,5) {\small $\frac{22}{31}$};
		\node at (260/31,5) {\small $\frac{26}{31}$};
	
		\node at (110/31-0.2,-1) {\small $\frac{11}{31}$};
		\node at (130/31,-1) {\small $\frac{13}{31}$};
		\node at (210/31-0.5,-1) {\small $\frac{21}{31}$};
		\node at (220/31,-1) {\small $\frac{22}{31}$};
		\node at (260/31,-1) {\small $\frac{26}{31}$};
	
		\draw (0,0) node[left]{$0$};
		\draw (0,4) node[left]{$0$};
		\draw (10,0) node[right]{$1$};
		\draw (10,4) node[right]{$1$};
	\end{scope}

	\end{tikzpicture}
	\caption{A braid diagram defined from the periodic orbit of the Bernoulli shift for $11/31$.}
	\label{Bernoulli-shift-trefoil}
\end{figure}
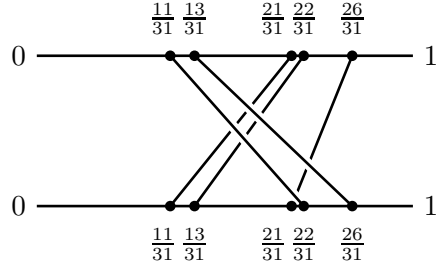

For a quadratic form class $[Q]$, or equivalently for the class of the associated real quadratic irrational $[w_Q]$, we define the corresponding modular knot as follows.

\begin{definition}\label{def:modular-knot-Ber}
	For $[Q] \in \calQ_+/\SL_2(\Z)$, replacing $L$ with $0$ and $R$ with $1$ in the word $W(Q)$ from \cref{lem:cont-word-corr} gives a rational number $x \in [0,1]$ with binary expansion
	\[
		x = 0. \overline{0^{c_1} 1^{c_2} \cdots 0^{c_{2\ell-1}} 1^{c_{2\ell}}}_{(2)}.
	\]
	Here, for $j \in \{0,1\}$, $j^c$ denotes $c$ consecutive copies of $j$. The braid obtained from the Bernoulli shift orbit of $x$ is called a \emph{modular braid} associated with $[Q]$, denoted by $\mathbb{M}(Q)$ or $\mathbb{M}(w_Q)$. Its closure defines a knot, which is called a \emph{modular knot}.
\end{definition}

Note that $x$ depends on the choice of a cyclic permutation of $W(Q)$, whereas $\mathbb{M}(Q)$ does not.

\begin{example}
	For $Q(x,y) = 5x^2 - 6xy - 3y^2$, as seen in~\cref{ex:Q365}, we have $W(Q) = LRLR^2$. This gives the rational number
	\[
		0.\overline{01011}_{(2)} = \sum_{j=0}^\infty \left(\frac{1}{2^{5j+2}} + \frac{1}{2^{5j+4}} + \frac{1}{2^{5j+5}} \right) = \frac{11}{31},
	\]
	so the braid obtained in \cref{Bernoulli-shift-trefoil} is the associated modular braid. Its closure is the modular knot, which is isotopic to the trefoil.
\end{example}

% --------------------------------------------------------------------------
\subsection{Modular knots using words}
% --------------------------------------------------------------------------

The discussion in the previous subsection can also be reformulated purely in terms of words, without referring to the Bernoulli shift. The number of letters contained in a word $W$ is called its \emph{length}, and we write it as $\mathrm{length}(W)$. We denote by $\mathrm{cyc}: \{L, R\}_\mathrm{prim}^* \to \{L, R\}_\mathrm{prim}^*$ the cyclic permutation that moves the first letter of a word to the end. 

\begin{definition}\label{def:Lyndon-order}
	For a word $W \in \{L, R\}^*_\mathrm{prim}$, let $\mathrm{rank}_j(W)$ $(1 \le j \le \mathrm{length}(W)$) denote the ascending-order rank of the cyclic permutation $\mathrm{cyc}^{\circ(j-1)}(W)$ among all cyclic permutations of $W$. We also write $m_W$ for the value of $j$ such that $\mathrm{rank}_j(W) = \mathrm{length}(W)$. In other words, the permutation obtained after $(m_W-1)$ moves is the largest among all cyclic permutations of $W$.
\end{definition}

\begin{example}
	For the word $W = LRLRR$, its cyclic permutations are given by $LRLRR$, $RLRRL$, $LRRLR$, $RRLRL$, and $RLRLR$. Arranging them in lexicographic order gives
	\[
		LRLRR < LRRLR < RLRLR < RLRRL < RRLRL.
	\]
	Since $W = LRLRR$ is the smallest among these, it is a Lyndon word. Being minimal among its cyclic permutations, we have $\mathrm{rank}_1(W) = 1$. After one cyclic shift, we obtain $RLRRL$, which is the fourth smallest among all cyclic permutations, giving $\mathrm{rank}_2(W) = 4$. The maximal permutation is $RRLRL$, which results from $4-1 = 3$ shifts, so $m_W = 4$.
\end{example}

Without referring to the Bernoulli shift, the description can be given as follows. For each word $W \in \{L,R\}^*_\mathrm{prim}$, we define a sequence $(\mathrm{rank}_j(W))_j$ by considering all of its cyclic permutations. Then we construct a positive braid diagram by connecting the $\mathrm{rank}_j(W)$-th point on the top to the $\mathrm{rank}_{j+1}(W)$-th point on the bottom for all $1 \le j \le \mathrm{length}(W)$, where we set $\mathrm{rank}_{\mathrm{length}(W)+1}(W) = \mathrm{rank}_1(W)$.

\begin{example}\label{eg:trefoil}
	For the Lyndon word $W = LRLRR$, its cyclic permutations are given as follows:
	\[
	\begin{array}{c|ccccc}
		\mathrm{cyc}^{\circ(j-1)}(W) & LRLRR & RLRRL & LRRLR & RRLRL & RLRLR \\
		\hline
		\mathrm{rank}_j(W) & 1 & 4 & 2 & 5 & 3
	\end{array}
	\]
	The corresponding braid is given below, and coincides with the one shown in \cref{Bernoulli-shift-trefoil}.
	\begin{figure}[H]
	\centering
	\begin{tikzpicture}[line width=1pt, scale=0.5]
	\begin{scope}[shift={(0,0)}]

		\def\dotPt{4pt}

	% Base lines
		\draw (0,0) -- (10,0);
		\draw (0,4) -- (10,4);

	% Dots (lower and upper)
	\foreach \x in {1,3,5,7,9}{
	  \fill (\x,0) circle (\dotPt);
	  \fill (\x,4) circle (\dotPt);
	}
	
	% Lines
	\foreach \a/\b in {1/5, 3/7, 5/9}{
	  \draw (\a,0) -- (\b,4);
	}
	
	% Crossings (white overlay + actual line)
	\foreach \a/\b in {7/1, 9/3}{
	  \draw[white, line width=8pt] (\a-0.5,0+1/3) -- (\b+0.5,11/3);
	  \draw (\a,0) -- (\b,4);
	}
	
	% Labels (upper and lower)
	\foreach \i/\x in {1/1, 2/3, 3/5, 4/7, 5/9}{
	  \node at (\x,5) {$\i$};
	  \node at (\x,-1) {$\i$};
	}
	\end{scope}
	\end{tikzpicture}
		\caption{A braid diagram for the Lyndon word $LRLRR$.}
		\label{Lorenz-Lyndon-ababb}
	\end{figure}
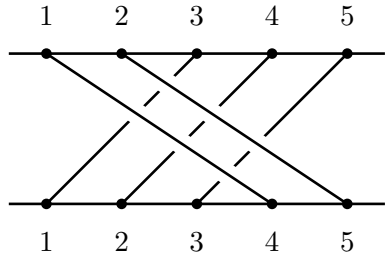
\end{example}

\begin{definition}\label{def:modular-knot-word}
	For a word $W \in \{L, R\}_{\mathrm{prim}}^*$, the braid obtained above is called a \emph{modular braid} associated with $W$, denoted by $\mathbb{M}(W)$. Its closure defines a knot, which is called a \emph{modular knot}.
\end{definition}

We have now defined the modular knot in two different ways, in \cref{def:modular-knot-Ber} and \cref{def:modular-knot-word}. These two definitions are equivalent in the following sense.

\begin{proposition}\label{prop:two-modular-braids}
	For $[Q] \in \calQ_+/\SL_2(\Z)$, we have $\mathbb{M}(W(Q)) = \mathbb{M}(Q)$. Conversely, for any primitive word $W \in \{L, R\}_\mathrm{prim}^* \setminus \{L, R\}$, there exists a quadratic form $Q \in \mathcal{Q}_+$ such that $\mathbb{M}(W) = \mathbb{M}(Q)$.
\end{proposition}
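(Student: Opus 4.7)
My plan is to prove the two assertions in turn, with the forward identification $\mathbb{M}(W(Q)) = \mathbb{M}(Q)$ doing the substantive work and the converse falling out formally from \cref{lem:cont-word-corr}.

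For the converse, given a primitive word $W \in \{L,R\}_\mathrm{prim}^* \setminus \{L, R\}$, I would first apply \cref{lem:cont-word-corr} to produce $[Q] \in \calQ_+/\SL_2(\Z)$ whose word $W(Q)$ lies in the same cyclic class as $W$. Next I would check that $\mathbb{M}(W)$ depends only on that cyclic class: replacing $W$ by $\mathrm{cyc}(W)$ shifts the sequence $(\mathrm{rank}_j)_j$ cyclically, which leaves the cyclic family of consecutive pairs $(\mathrm{rank}_j, \mathrm{rank}_{j+1})$, and hence the set of strand connections of \cref{def:modular-knot-word}, unchanged. Combined with the forward direction, this yields $\mathbb{M}(W) = \mathbb{M}(W(Q)) = \mathbb{M}(Q)$.

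The forward direction rests on an explicit identification between the Bernoulli orbit of $x$ and the cyclic permutations of $W(Q) = L^{c_1} R^{c_2} \cdots L^{c_{2\ell-1}} R^{c_{2\ell}}$. Two simple observations drive this: (i) the iterate $\mathrm{Ber}^{j-1}(x)$ has binary expansion equal to the $(j-1)$-fold cyclic shift of that of $x$, which under $0 \leftrightarrow L$ and $1 \leftrightarrow R$ matches $\mathrm{cyc}^{\circ(j-1)}(W(Q))$; and (ii) the usual order on $[0,1]$ coincides with the lexicographic order on binary expansions. Together these give that the rank of $\mathrm{Ber}^{j-1}(x)$ among the orbit equals $\mathrm{rank}_j(W(Q))$, so the strand-endpoint pairings defined by the two constructions coincide.

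Finally, one must upgrade this agreement of endpoint pairings to equality of the underlying positive braids. In both diagrams the strands are straight line segments from matched top to bottom points, and at each crossing the strand starting from the smaller top position passes over the other: in the Bernoulli picture this is the stated ``$[0,1/2]$ over $[1/2,1]$'' rule, while in the word picture the $L$-starting cyclic permutations occupy precisely the lowest $\sum_{i=1}^\ell c_{2i-1}$ ranks at the top and the $R$-starting ones fill the remaining ranks, so the same convention applies. For such straight-line positive diagrams — in which any two strands cross at most once, with sign forced by the endpoint permutation — the resulting braid word is uniquely determined by the pairing, so both constructions produce the same element of the braid group. The subtlest step I would scrutinize is this last one: verifying carefully that, scanning from top to bottom, the order in which crossings appear in the two pictures yields identical braid words and not merely two positive braids inducing the same underlying permutation.
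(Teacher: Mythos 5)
Your proposal is correct and follows essentially the same route as the paper: the forward direction via the order-preserving, shift-equivariant binary-expansion map $\phi$ (so that $\mathrm{rank}_j(W) = \mathrm{rank}_j(\phi(W))$), and the converse via \cref{lem:cont-word-corr} together with the cyclic-invariance of $\mathbb{M}(W)$. Your extra care in checking that matching endpoint pairings plus the common crossing convention force equal braid words is a reasonable elaboration of a step the paper leaves implicit.
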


\begin{proof}
	For a rational number $x \in [0,1]$, let $n$ be the period of the periodic orbit of $x$ under the Bernoulli shift. We define $\mathrm{rank}_j(x)$ ($1 \le j \le n$) as the ascending-order rank of $\mathrm{Ber}^{\circ(j-1)}(x)$ within its periodic orbit. Since the injective map $\phi: \{L, R\}_\mathrm{prim}^* \setminus \{L, R\} \to (0,1) \cap \Q$ given in \cref{def:modular-knot-Ber} by
	\[
		\phi(W) = 0. \overline{W|_{L \to 0, R \to 1}}_{(2)}
	\]
	is order preserving, and satisfies $\phi \circ \mathrm{cyc} = \mathrm{Ber} \circ \phi$, we have $\mathrm{rank}_j(W) = \mathrm{rank}_j(\phi(W))$. Therefore, a quadratic form $Q$ and a word $W(Q)$ define the same braid. The latter follows from \cref{lem:cont-word-corr}. Indeed, for any $W$, there exists $Q \in \calQ_+$ such that a cyclic permutation of $W(Q)$ coincides with $W$. Hence, $\mathbb{M}(W) = \mathbb{M}(W(Q)) = \mathbb{M}(Q)$.
\end{proof}

%Using this correspondence, we often describe objects mainly in terms of words.

% --------------------------------------------------------------------------
\section{Lorenz braids and Lorenz knots}\label{sec:Lorenz-braid}
% --------------------------------------------------------------------------

% --------------------------------------------------------------------------
\subsection{Lorenz braids}
% --------------------------------------------------------------------------

More generally, we also introduce the notion of the Lorenz braid. For the history of research on Lorenz braids, see the article by Birman--Williams~\cite{BirmanWilliams1983}, as well as surveys by Birman~\cite{Birman2013} and Dehornoy~\cite{Dehornoy2011}. 

\begin{definition}\label{def:Lorenz-braid}
	Given a positive integer $r$ and pairs of positive integers $(p_1, q_1), \dots, (p_r, q_r)$, define $P_j = p_1 + \cdots + p_j$ and $Q_j = q_1 + \cdots + q_j$ ($1 \le j \le r$). The \emph{Lorenz braid} associated with these data is constructed as follows:
	\begin{enumerate}
		\item The braid has $P_r + Q_r$ strands.
		\item The top row is divided into two parts, the left part of size $Q_r$ and the right part of size $P_r$. The bottom row is divided, from left to right, into $2r$ parts of sizes $p_1, q_1, \dots, p_r, q_r$. Each part of size $p_j$ (resp.~$q_j$) is called a $p$-part (resp.~$q$-part).
		\item Strands from the left top part are connected, in order, to the $q$-parts, and strands from the right top part are connected to the $p$-parts.
		\item Crossings are arranged so that strands from the left top part pass over those from the right top part, making the braid positive.
	\end{enumerate}
	We denote this braid by $\mathbb{L}(p_1, q_1; \dots; p_r, q_r)$,	and its closure is called a \emph{Lorenz link}. When the closure is a knot, we refer to it as a \emph{Lorenz knot}.
\end{definition}

\begin{example}
	For $r=4$ and $(p_1, q_1; p_2, q_2; p_3, q_3; p_4, q_4) = (2,4; 1,2; 3,1; 2,2)$, we have the following Lorenz braid. The same example was considered under different parameters in~\cite[Figure 3]{BirmanKofman2009}.
\begin{figure}[H]
	\centering
	\begin{tikzpicture}[line width=1pt, scale=0.45]
	\begin{scope}[shift={(0,0)}]
	% lower and upper lines
		\draw (0,0) -- (18,0);
		\draw (0,6) -- (18,6);
		
	% dots
		\def \dotPt {4pt}; 
		\foreach \x in {1,...,17}{\fill (\x,0) circle (\dotPt);}
		\foreach \x in {1,...,17}{\fill (\x,6) circle (\dotPt);}
	
	% lines
		\draw (1,0) -- (10,6);
		\draw (2,0) -- (11,6);
		\draw (7,0) -- (12,6);
		\draw (10,0) -- (13,6);
		\draw (11,0) -- (14,6);
		\draw (12,0) -- (15,6);
		\draw (14,0) -- (16,6);
		\draw (15,0) -- (17,6);
		\draw[white, line width=4pt] (3-1/8,0+3/8) -- (1+1/8,45/8); 
		\draw (3,0) -- (1,6);
		\draw[white, line width=4pt] (4-1/8,0+3/8) -- (2+1/8,45/8); 
		\draw (4,0) -- (2,6);
		\draw[white, line width=4pt] (5-1/8,0+3/8) -- (3+1/8,45/8); 
		\draw (5,0) -- (3,6);
		\draw[white, line width=4pt] (6-1/8,0+3/8) -- (4+1/8,45/8); 
		\draw (6,0) -- (4,6);
		\draw[white, line width=4pt] (8-1/8,0+1/4) -- (5+1/8,23/4); 
		\draw (8,0) -- (5,6);
		\draw[white, line width=4pt] (9-1/8,0+1/4) -- (6+1/8,23/4); 
		\draw (9,0) -- (6,6);
		\draw[white, line width=5pt] (13-1/4,0+1/4) -- (7+1/4,23/4); 
		\draw[line width=3pt] (13,0) -- (7,6);
		\draw[white, line width=5pt] (16-1/4,0+3/16) -- (8+1/4,93/16); 
		\draw[line width=3pt] (16,0) -- (8,6);
		\draw[white, line width=5pt] (17-1/4,0+3/16) -- (9+1/4,93/16); 
		\draw[line width=3pt] (17,0) -- (9,6);
		
	% texts
		\foreach \x in {1,...,17}{\node at (\x,6.5) {\tiny $\x$};}
		\foreach \x in {1,...,17}{\node at (\x,-0.5) {\tiny $\x$};}
		
		\draw (1,8) -- (9.3,8); 
		\draw (1,8) -- (1,7.5);
		\draw (9.3,8) -- (9.3,7.5);
		\draw (17,8) -- (9.7,8); 
		\draw (17,8) -- (17,7.5);
		\draw (9.7,8) -- (9.7,7.5);
		\node at (5.5, 7.5) {$L$}; \node at (13.5, 7.5) {$R$};
	\end{scope}
	
	\end{tikzpicture}
	\caption{Lorenz braid for $\mathbb{L}(2,4; 1,2; 3,1; 2,2)$.}
	\label{Lorenz-braid-example}
\end{figure}
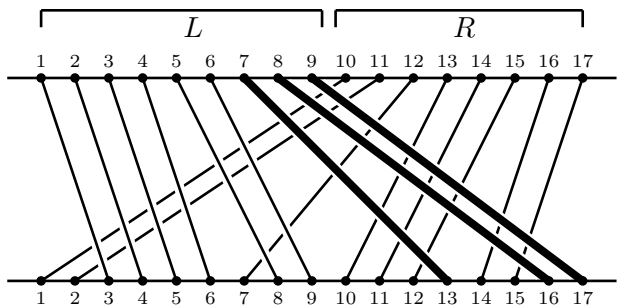
\end{example}

By definition, a modular braid is a special case of a Lorenz braid. In particular, we have the map
\[
	\mathbb{M}: (\{L, R\}_\mathrm{prim}^* \setminus \{L, R\})/{\sim} \to \{\text{Lorenz braids}\}.
\]
However, the closure of a Lorenz braid is generally a link with multiple components rather than a knot. Hence, the above map is not surjective, that is, Lorenz braids form a broader family than modular braids. 

On the other hand, Birman--Williams~\cite[Proposition 3.1]{BirmanWilliams1983} showed that any Lorenz knot corresponds one-to-one with a class of primitive $L$-$R$ words. The associated $L$-$R$ word is obtained, as explained in the following example, by following the orbit of points in the Lorenz braid and assigning the letter $L$ (resp.~$R$) to each point lying in the left (resp.~right) part at the top. Ghys~\cite{Ghys2006} related these word expressions to the expressions of hyperbolic elements of $\SL_2(\Z)$ using its two generators, thereby establishing a connection between modular knots and Lorenz knots. In this article, we instead described modular braids using the continued fraction expansions of real quadratic irrationals, rather than the generators of $\SL_2(\Z)$.

\begin{example}
	The Lorenz braid $\mathbb{L}(2,4; 1,2; 3,1; 2,2)$ given in \cref{Lorenz-braid-example} is also the modular braid $\mathbb{M}(W)$ associated with the word 
	\begin{align}\label{eq:Lorenz-LR-word}
		W = L^4 R^3 L^4 R^3 L R^2.
	\end{align}
	In this Lorenz braid, the transition of points is
	\[
		1 \to 3 \to 5 \to 8 \to 16 \to 14 \to 11 \to 2 \to 4 \to 6 \to 9 \to 17 \to 15 \to 12 \to 7 \to 13 \to 10 \to 1.
	\]
	According to the construction of modular braid, assigning $L$ to the integers $1, \dots, 9$ and $R$ to $10, \dots, 17$, and reading them in the order above, yields the word $W = L^4 R^3 L^4 R^3 L R^2$. The quadratic form corresponding to this word is given by $Q(x,y) = 152x^2 - 600xy - 237y^2$, whose root has the continued fraction expansion
	\begin{align}\label{eq:Lorenz-cont-frac}
		w_Q = \frac{150 + \sqrt{31506}}{76} = [\overline{4,3,4,3,1,2}].
	\end{align}
\end{example}

% --------------------------------------------------------------------------
\subsection{Some remarks on Lorenz knots}
% --------------------------------------------------------------------------

In the above definition, we consider arbitrary positive integers $p_1, q_1, \dots, p_r, q_r$. However, when focusing on its closure, we can assume $p_1 > 1$ and $q_r > 1$ without loss of generality by applying a Markov move, (see~\cite[Section 2.5]{KasselTuraev2008}). This result can also be restated in terms of the word description of modular braids.

\begin{lemma}\label{lem:Markov}
	Let $W$ be a Lyndon word. Define $W_L$ by adding the letter $L$ at the beginning of $W$, and $W_R$ by inserting the letter $R$ immediately after the $m_W$-th letter of $W$. Then the closures of $\mathbb{M}(W), \mathbb{M}(W_L)$, and $\mathbb{M}(W_R)$ are isotopic. Here, $m_W$ is defined as in~\cref{def:Lyndon-order}.
\end{lemma}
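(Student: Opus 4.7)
The plan is to apply Markov's theorem: it suffices to exhibit a sequence of conjugations, braid relations, and a single Markov destabilization that reduces $\mathbb{M}(W_L)$ to $\mathbb{M}(W)$, and analogously for $\mathbb{M}(W_R)$.

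First I would establish combinatorial rank shift identities. Writing $W = w_1 \cdots w_n$ in Lyndon form $L^{c_1} R^{c_2} \cdots L^{c_{2r-1}} R^{c_{2r}}$, a lexicographic comparison shows that the cyclic permutation $LW$ of $W_L$ uniquely realizes the maximal initial $L$-run (of length $c_1 + 1$) among all cyclic permutations of $W_L$; hence $\mathrm{rank}_1(W_L) = 1$. For $j \ge 2$, the $j$-th cyclic permutation of $W_L$ is the $(j-1)$-th cyclic permutation of $W$ with a single $L$ inserted at a fixed position, and a case-by-case comparison gives
\[
\mathrm{rank}_j(W_L) = \mathrm{rank}_{j-1}(W) + 1 \quad (j \ge 2).
\]
Analogously, for $W_R$ one checks that inserting an $R$ after position $m_W$ produces a new largest cyclic permutation $R^{c_{2r}+1} L^{c_1} \cdots$ at index $j = m_W$, and
\[
\mathrm{rank}_j(W_R) = \begin{cases} \mathrm{rank}_j(W) & 1 \le j \le m_W - 1, \\ n+1 & j = m_W, \\ \mathrm{rank}_{j-1}(W) & m_W + 1 \le j \le n+1. \end{cases}
\]

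Next I would translate these identities into structural properties of the positive permutation braids. For $\mathbb{M}(W_L)$, the rank shift shows that the strand starting at top position $1$ terminates at bottom position $2$, while the unique strand terminating at bottom position $1$ originates at top position $\mathrm{rank}_n(W) + 1$; these observations force the strand at position $1$ to participate in exactly one crossing, contributing a single generator $\sigma_1$ in any positive braid word for $\mathbb{M}(W_L)$. Since strand $1$ reaches its terminal position $2$ at that crossing, neither $\sigma_1$ nor $\sigma_2$ can appear afterward, so the suffix of the braid word lies in $\langle \sigma_3, \ldots, \sigma_n \rangle$ and commutes with $\sigma_1$. Pushing $\sigma_1$ past the suffix produces a factorization $\mathbb{M}(W_L) = \tau \sigma_1$ with $\tau \in \langle \sigma_2, \ldots, \sigma_n \rangle \cong B_n$. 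A Markov destabilization on the leftmost strand then replaces $\tau \sigma_1$ by $\tau$; an identification of the resulting permutation with that of $\mathbb{M}(W)$, together with the uniqueness of positive permutation braids, gives $\tau = \mathbb{M}(W)$ after the reindexing $\sigma_{k+1} \mapsto \sigma_k$. The argument for $\mathbb{M}(W_R)$ is symmetric: the strand at top position $n+1$ reaches bottom position $n$ with a single crossing that contributes a unique $\sigma_n$, which can be commuted to the end of the braid word to enable a standard right Markov destabilization.

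The main obstacle lies in the first step, particularly for $W_R$: one must verify that a single inserted $R$ creates a new cyclic permutation at position $m_W$ strictly larger than every other cyclic permutation, while the remaining ranks shift by zero or one in exactly the stated pattern. This combinatorial lemma leans essentially on the Lyndon property of $W$ and on the defining property of $m_W$. Once these rank identities are in place, the Markov reduction is essentially mechanical.
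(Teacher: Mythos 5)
Your argument is sound in outline, but it is worth knowing that the paper's own ``proof'' is a two-line deferral: it simply states that the two operations are Markov moves and cites Dehornoy's survey (Proposition 1.12 of \cite{Dehornoy2011}) for the details. What you have written is essentially a self-contained reconstruction of that cited argument: the rank-shift identities you state are correct (e.g.\ for $W=LRLRR$, $m_W=4$, one checks $\mathrm{rank}_j(W_L)=\mathrm{rank}_{j-1}(W)+1$ and the stated pattern for $W_R=LRLRRR$ directly), and from them the single-crossing analysis, the commutation of the isolated $\sigma_1$ (resp.\ $\sigma_n$) past the rest of the word, the destabilization, and the identification of the residual positive permutation braid with $\mathbb{M}(W)$ all go through. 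Two caveats. First, the combinatorial heart --- that inserting one letter at the distinguished cyclic position perturbs the ranks exactly as claimed --- is asserted via ``a case-by-case comparison'' and flagged by you as the main obstacle; this is precisely the content of Dehornoy's proposition, so your proof is not complete until that lemma is actually proved (it does hold, and does use the Lyndon/primitivity hypotheses, e.g.\ to guarantee that $c_1$ is the strictly longest $L$-run so that $\mathrm{rank}_1(W_L)=1$). Second, a small slip: the new maximal rotation of $W_R$ need not have the form $R^{c_{2r}+1}L^{c_1}\cdots$; the maximal rotation of $W$ begins at the $R$-run containing the $m_W$-th letter, which is not in general the final run $R^{c_{2r}}$ (the paper's running example $W=L^4R^3L^4R^3LR^2$ with $m_W=12$ extends the second $R^3$, not the trailing $R^2$). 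This does not affect the substantive claim that the rotation of $W_R$ based at index $m_W$ is the unique maximum, but the formula as written is incorrect. In exchange for being longer than the paper's citation, your route makes explicit exactly where the Lyndon property and the definition of $m_W$ enter, which the paper leaves opaque.
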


\begin{proof}
	These two operations correspond to Markov moves in the braid. See~\cite[Proposition 1.12]{Dehornoy2011} for details.
\end{proof}

\begin{example}
	For the braid in \cref{Lorenz-braid-example}, the corresponding Lyndon word is $L^4 R^3 L^4 R^3 LR^2$ as before. Among its cyclic permutations, the largest is $R^3 LR^2 L^4R^3L^4$, which is obtained after $11$ operations, so $m_W = 12$. Therefore, we have
	\[
		W_L = L^5R^3L^4R^3LR^2, \qquad W_R = L^4R^3L^4R^4LR^2.
	\]
	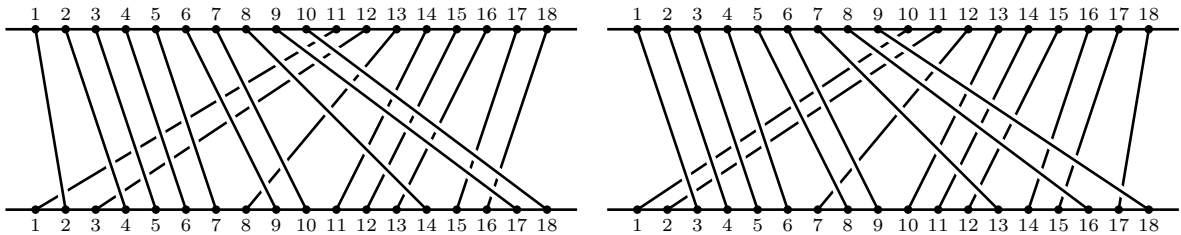
\begin{figure}[H]
	\centering
	\begin{tikzpicture}[line width=1pt, scale=0.4]
	\begin{scope}[shift={(0,0)}]
	% lower and upper lines
		\draw (0,0) -- (19,0);
		\draw (0,6) -- (19,6);
		
	% dots
		\def \dotPt {4pt}; 
		\foreach \x in {1,...,18}{\fill (\x,0) circle (\dotPt);}
		\foreach \x in {1,...,18}{\fill (\x,6) circle (\dotPt);}
	
	% lines
		\draw (1,0) -- (11,6);
		\draw (3,0) -- (12,6);
		\draw (8,0) -- (13,6);
		\draw (11,0) -- (14,6);
		\draw (12,0) -- (15,6);
		\draw (13,0) -- (16,6);
		\draw (15,0) -- (17,6);
		\draw (16,0) -- (18,6);
		
		\draw[white, line width=4pt] (2-1/16,3/8) -- (1+1/16, 45/8);
		\draw (2,0) -- (1,6);
		\draw[white, line width=4pt] (4-1/8,0+3/8) -- (2+1/8,45/8); 
		\draw (4,0) -- (2,6);
		\draw[white, line width=4pt] (5-1/8,0+3/8) -- (3+1/8,45/8); 
		\draw (5,0) -- (3,6);
		\draw[white, line width=4pt] (6-1/8,0+3/8) -- (4+1/8,45/8); 
		\draw (6,0) -- (4,6);
		\draw[white, line width=4pt] (7-1/8,0+3/8) -- (5+1/8,45/8); 
		\draw (7,0) -- (5,6);
		\draw[white, line width=4pt] (9-1/8,0+1/4) -- (6+1/8,23/4); 
		\draw (9,0) -- (6,6);
		\draw[white, line width=4pt] (10-1/8,0+1/4) -- (7+1/8,23/4); 
		\draw (10,0) -- (7,6);
		\draw[white, line width=4pt] (14-1/4,0+1/4) -- (8+1/4,23/4); 
		\draw (14,0) -- (8,6);
		\draw[white, line width=4pt] (17-1/4,0+3/16) -- (9+1/4,93/16); 
		\draw (17,0) -- (9,6);
		\draw[white, line width=4pt] (18-1/4,0+3/16) -- (10+1/4,93/16); 
		\draw (18,0) -- (10,6);
		
	% texts
		\foreach \x in {1,...,18}{\node at (\x,6.5) {\tiny $\x$};}
		\foreach \x in {1,...,18}{\node at (\x,-0.5) {\tiny $\x$};}
	\end{scope}
	
	\begin{scope}[shift={(20,0)}]
	% lower and upper lines
		\draw (0,0) -- (19,0);
		\draw (0,6) -- (19,6);
		
	% dots
		\def \dotPt {4pt}; 
		\foreach \x in {1,...,18}{\fill (\x,0) circle (\dotPt);}
		\foreach \x in {1,...,18}{\fill (\x,6) circle (\dotPt);}
	
	% lines
		\draw (1,0) -- (10,6);
		\draw (2,0) -- (11,6);
		\draw (7,0) -- (12,6);
		\draw (10,0) -- (13,6);
		\draw (11,0) -- (14,6);
		\draw (12,0) -- (15,6);
		\draw (14,0) -- (16,6);
		\draw (15,0) -- (17,6);
		\draw (17,0) -- (18,6);
		
		\draw[white, line width=4pt] (3-1/8,0+3/8) -- (1+1/8,45/8); 
		\draw (3,0) -- (1,6);
		\draw[white, line width=4pt] (4-1/8,0+3/8) -- (2+1/8,45/8); 
		\draw (4,0) -- (2,6);
		\draw[white, line width=4pt] (5-1/8,0+3/8) -- (3+1/8,45/8); 
		\draw (5,0) -- (3,6);
		\draw[white, line width=4pt] (6-1/8,0+3/8) -- (4+1/8,45/8); 
		\draw (6,0) -- (4,6);
		\draw[white, line width=4pt] (8-1/8,0+1/4) -- (5+1/8,23/4); 
		\draw (8,0) -- (5,6);
		\draw[white, line width=4pt] (9-1/8,0+1/4) -- (6+1/8,23/4); 
		\draw (9,0) -- (6,6);
		\draw[white, line width=4pt] (13-1/4,0+1/4) -- (7+1/4,23/4); 
		\draw (13,0) -- (7,6);
		\draw[white, line width=4pt] (16-1/4,0+3/16) -- (8+1/4,93/16); 
		\draw (16,0) -- (8,6);
		\draw[white, line width=4pt] (18-1/3,0+2/9) -- (9+1/3,52/9); 
		\draw (18,0) -- (9,6);
		
	% texts
		\foreach \x in {1,...,18}{\node at (\x,6.5) {\tiny $\x$};}
		\foreach \x in {1,...,18}{\node at (\x,-0.5) {\tiny $\x$};}
	\end{scope}
	
	\end{tikzpicture}
		\caption{Modular braids for $W_L$ (left) and $W_R$ (right).}
		\label{Lorenz-Lyndon-Markov}
	\end{figure}
\end{example}

\begin{remark}\label{rem:T-braid}
	Although a Lorenz braid is defined as an element of the braid group $B_{P_r+Q_r}$, Birman--Kofman~\cite{BirmanKofman2009} introduced the concept of a \emph{$T$-braid}, which belongs to a smaller braid group yet has the same closure. Here we briefly introduce their result. Assume $p_1 > 1$ and $q_r > 1$ as before. For each $1 \le i \le Q_r$, let $d_i \in \Z_{>1}$ be such that the $i$-th top point is connected to the $(i+d_i)$-th bottom point. This produces a vector $\vec{d} = (d_1, \dots, d_{Q_r})$. Writing $\vec{d}$ in the form
	\[
		\vec{d} = (r_1^{s_1}, \dots, r_k^{s_k})
	\]
	with $1 < r_1 < \cdots < r_k$ and $s_j > 0$, (where $r^s$ denotes $r$ repeated $s$ times), Birman and Kofman~\cite[Theorem 1]{BirmanKofman2009} showed that the braid defined by
	\[
		\mathbb{T}((r_1, s_1), \dots, (r_k, s_k)) = (\sigma_1 \sigma_2 \cdots \sigma_{r_1-1})^{s_1} (\sigma_1 \sigma_2 \cdots \sigma_{r_2-1})^{s_2} \cdots (\sigma_1 \sigma_2 \cdots \sigma_{r_k-1})^{s_k} \in B_{r_k}
	\]
	has the same closure as the original Lorenz braid. In the example of \cref{Lorenz-braid-example}, we have $\vec{d} = (2,2,2,2,3,3,6,8,8) = (2^4, 3^2, 6^1, 8^2)$, so the corresponding $T$-braid is $\mathbb{T}((2,4), (3,2), (6,1), (8,2)) \in B_8$.
\end{remark}

\begin{remark}
	For $r=1$, the condition that $p_1$ and $q_1$ are coprime is necessary and sufficient for the closure of the Lorenz braid $\mathbb{L}(p_1, q_1)$ to form a knot. In general, it seems difficult to describe the number of components of the closure of a Lorenz braid $\mathbb{L}(p_1, q_1; \dots; p_r, q_r)$ by a closed condition involving $p_j, q_j$. For example, in the case $p_1 = q_1 = \cdots = p_r = q_r = 1$, the closure of the Lorenz braid $\mathbb{L}((1,1)^r)$ defines a knot if and only if, via the Bernoulli shift, the period of the binary expansion of $1/(2r+1)$ has length $2r$. This is equivalent to requiring that the smallest positive integer $k$ satisfying $2^k \equiv 1 \pmod{2r+1}$ be $k=2r$, which in turn implies that $2r+1$ must be prime. The infinitude of such primes remains an open problem, so unlike the $r=1$ case with $\gcd(p_1, q_1) = 1$, a comparably simple criterion cannot be expected. (cf.~\emph{Artin's primitive root conjecture}~\cite{Moree2012}).
	\begin{figure}[H]
	\centering
	\begin{tikzpicture}[line width=1pt, scale=0.4]
	\begin{scope}[shift={(0,0)}]
	% lower and upper lines
		\draw (0,0) -- (8,0);
		\draw (0,6) -- (8,6);
		
	% dots
		\def \dotPt {4pt}; 
		\foreach \x in {1,3,5,7}{\fill (\x,0) circle (\dotPt);}
		\foreach \x in {1,3,5,7}{\fill (\x,6) circle (\dotPt);}
	
	% lines
		\draw (1,0) -- (5,6);
		\draw (5,0) -- (7,6);
		
		\draw[white, line width=4pt] (3-1/4,3/4) -- (1+1/4, 21/4);
		\draw (3,0) -- (1,6);
		\draw[white, line width=4pt] (7-1/4,0+3/8) -- (3+1/4,45/8); 
		\draw (7,0) -- (3,6);
		
	% texts
		\foreach \x in {1,2,3,4}{\node at (2*\x-1,6.5) {\tiny $\x$};}
		\foreach \x in {1,2,3,4}{\node at (2*\x-1,-0.5) {\tiny $\x$};}
	\end{scope}
	
	\begin{scope}[shift={(12,0)}]
	% lower and upper lines
		\draw (0,0) -- (11,0);
		\draw (0,6) -- (11,6);
		
	% dots
		\def \dotPt {4pt}; 
		\foreach \x in {1,...,10}{\fill (\x,0) circle (\dotPt);}
		\foreach \x in {1,...,10}{\fill (\x,6) circle (\dotPt);}
	
	% lines
		\foreach \a/\b in {1/6, 3/7, 5/8, 7/9, 9/10}{\draw (\a,0) -- (\b,6);}
		
		\draw[white, line width=4pt] (2-1/16,3/8) -- (1+1/16,45/8); 
		\draw (2,0) -- (1,6);
		\draw[white, line width=4pt] (4-1/8,0+3/8) -- (2+1/8,45/8); 
		\draw (4,0) -- (2,6);
		\draw[white, line width=4pt] (6-1/8,1/4) -- (3+1/8,23/4); 
		\draw (6,0) -- (3,6);
		\draw[white, line width=4pt] (8-1/8,3/16) -- (4+1/8,93/16); 
		\draw (8,0) -- (4,6);
		\draw[white, line width=4pt] (10-1/4,3/10) -- (5+1/4,57/10); 
		\draw (10,0) -- (5,6);
		
	% texts
		\foreach \x in {1,...,10}{\node at (\x,6.5) {\tiny $\x$};}
		\foreach \x in {1,...,10}{\node at (\x,-0.5) {\tiny $\x$};}
	\end{scope}
	
	\end{tikzpicture}
		\caption{For $r=2$, the rational number $1/5 = 0. \overline{0011}_{(2)}$ corresponds to the Lyndon word $W = LLRR$, which gives $\mathbb{L}(1,1; 1,1)$. For $r=5$, the corresponding rational is $1/11 = 0.\overline{0001011101}_{(2)}$.}
		\label{}
	\end{figure}
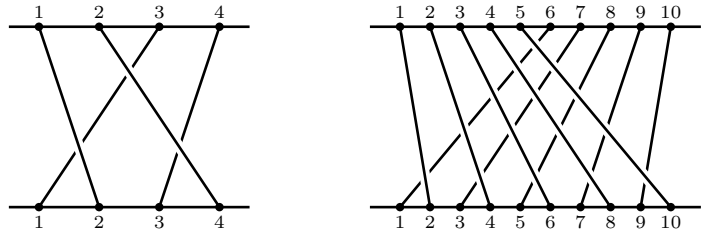
\end{remark}

% --------------------------------------------------------------------------
\section{Torus knots as modular knots}\label{sec:Torus-knot}
% --------------------------------------------------------------------------

As Birman--Williams~\cite{BirmanWilliams1983} showed, every torus knot of type $(p,q)$ with coprime positive integers $p$ and $q$ is a Lorenz knot. Here, the $(p,q)$-torus knot is the knot given as the closure of $\mathbb{T}(p,q) = (\sigma_1 \sigma_2 \cdots \sigma_{p-1})^q \in B_p$. From \cref{rem:T-braid}, this coincides with the closure of the Lorenz braid $\mathbb{L}(p,q)$. 
In this section, we realize $(p,q)$-torus knots as modular knots.

% --------------------------------------------------------------------------
\subsection{Christoffel words}
% --------------------------------------------------------------------------

For coprime positive integers $0 < p < q$, the \emph{snake graph} $S(p/q)$ is defined as follows: Consider a $p \times q$ grid of squares, and draw the diagonal from the lower left to the upper right. The snake graph is the region consisting of all unit squares intersected by this diagonal. Here is the example for $(p,q) = (4,7)$:

\begin{figure}[H]
\centering
\begin{tikzpicture}[scale=0.9,>=stealth,thick]
	\draw[thin] (0,0) grid (7,4);
	\draw (0,0) -- (7,4);

	\node[left] at (0,0) {$(0,0)$};
	\node[right] at (7,4) {$(7,4)$};

	\draw[line width=2pt, blue]
	 (0,0) -- (0,1) -- (1,1) -- (1,2) -- (3,2) -- (3,3) -- (5,3) -- (5,4) -- (7,4);

	\draw[line width=2pt,red]
	 (0,0) -- (2,0) -- (2,1) -- (4,1) -- (4,2) -- (6,2) -- (6,3) -- (7,3) -- (7,4);

	\foreach \x/\y in { 3/2, 5/3, 7/4
	}{
	  \draw[->,blue,thick, above] (\x-0.6,\y+0.2) -- ++(0.4,0);
	  \node[blue,above] at (\x-0.5,\y+0.2) {A};
	}

	\foreach \x/\y in { 0/1, 1/2, 3/3, 5/4
	}{
	  \draw[blue,thick,->] (\x-0.2,\y-0.2) -- ++(0,0.4) -- ++(0.4,0);
	\node[blue,above] at (\x+0.3,\y+0.2) {B};
	}

	\foreach \x/\y in { 1/0, 3/1, 5/2
	}{
	  \draw[->,red,thick, below] (\x-0.6,\y-0.2) -- ++(0.4,0);
	  \node[red,below] at (\x-0.5,\y-0.2) {A};
	}

	\foreach \x/\y in { 2/0, 4/1, 6/2, 7/3
	}{
	  \draw[red,thick,->] (\x-0.2,\y-0.2) -- ++(0.4,0) -- ++(0,0.4);
	  \node[red,below] at (\x-0.3,\y-0.2) {B};
	}
\end{tikzpicture}
\caption{The snake graph and its boundary for $4/7$, which are used to define the lower (red) and upper (blue) Christoffel words.}
\end{figure}
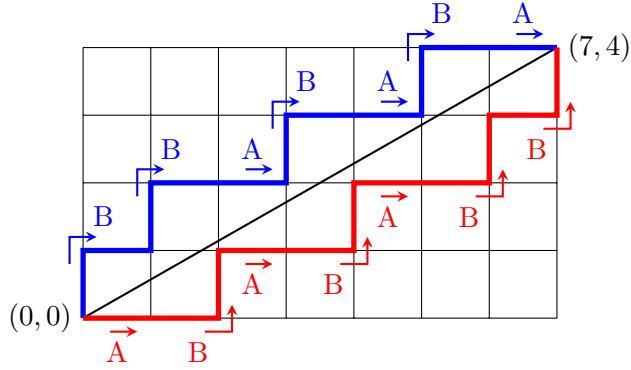

\begin{definition}
	For a snake graph $S(p/q)$, consider the ascending path along its lower boundary that follows the edges of the squares. Assign the letter $A$ each time the path moves one step to the right ($\to$), and the letter $B$ each time it moves one step to the right and one step up ($\to \uparrow$). Reading the letters along this path produces a word, called the \emph{lower Christoffel word}, which is denoted by $\mathrm{ch}_{p/q}(A,B)$.
\end{definition}

The above example provides $\mathrm{ch}_{4/7}(A,B) = ABABABB$. Similarly, the word obtained from the path along the upper boundary is called the \emph{upper Christoffel word}, which is denoted by $\mathrm{Ch}_{p/q}(A,B)$. The above example provides $\mathrm{Ch}_{4/7}(A,B) = BBABABA$. In this article, we only consider the lower Christoffel words. For further details on Christoffel words and the related Markov quadratic irrationals, see Aigner's book~\cite{Aigner2013}.

% --------------------------------------------------------------------------
\subsection{Torus knots}\label{sec:Chris-TV}
% --------------------------------------------------------------------------

We consider substituting $A = L$ and $B = LR$ into the lower Christoffel word. For example,
\begin{itemize}
	\item $\mathrm{ch}_{2/3}(A,B) = ABB \to \mathrm{ch}_{2/3}(L, LR) = LLRLR$.
	\item $\mathrm{ch}_{2/5}(A,B) = AABAB \to \mathrm{ch}_{2/5}(L, LR) = LLLRLLR$.
	\item $\mathrm{ch}_{3/4}(A,B) = ABBB \to \mathrm{ch}_{3/4}(L, LR) = LLRLRLR$.
	\item $\mathrm{ch}_{4/7}(A,B) = ABABABB \to \mathrm{ch}_{4/7}(L, LR) = LLRLLRLLRLR$.
\end{itemize}
This substitution yields a (Lyndon) word for each rational number in $(0,1)$. Note that this correspondence differs from the one between rational numbers and words arising from the Bernoulli shift.

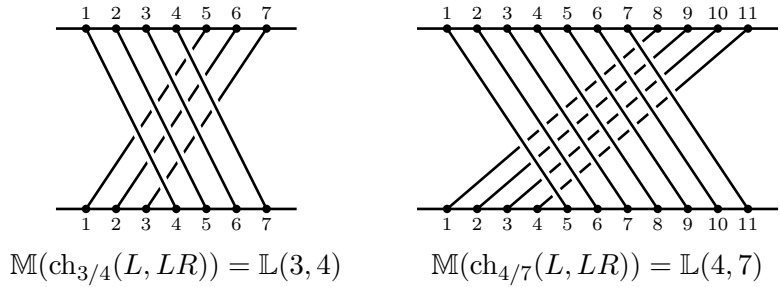
\begin{figure}[H]
\centering
\begin{tikzpicture}[line width=1pt, scale=0.4]
\begin{scope}[shift={(0,0)}]

	\def\dotPt{4pt}

% Base lines
	\draw (0,0) -- (8,0);
	\draw (0,6) -- (8,6);

% Dots (lower and upper)
	\foreach \x in {1,...,7}{
	  \fill (\x,0) circle (\dotPt);
	  \fill (\x,6) circle (\dotPt);
	}

% Lines
	\foreach \a/\b in {1/5, 2/6, 3/7}{
	  \draw (\a,0) -- (\b,6);
	}

% Crossings (white overlay + actual line)
	\foreach \a/\b in {4/1, 5/2, 6/3, 7/4}{
	  \draw[white, line width=4pt] (\a-1/4,1/2) -- (\b+1/4,11/2);
	  \draw (\a,0) -- (\b,6);
	}

% Labels (upper and lower)
	\foreach /\x in {1,...,7}{
	  \node at (\x,6.5) {\tiny $\x$};
	  \node at (\x,-0.5) {\tiny $\x$};
	}

		\node at (4,-2) {$\mathbb{M}(\mathrm{ch}_{3/4}(L, LR)) = \mathbb{L}(3,4)$};
	\end{scope}

	\begin{scope}[shift={(12,0)}]

	\def\dotPt{4pt}

% Base lines
	\draw (0,0) -- (12,0);
	\draw (0,6) -- (12,6);

% Dots (lower and upper)
	\foreach \x in {1,...,11}{
	  \fill (\x,0) circle (\dotPt);
	  \fill (\x,6) circle (\dotPt);
	}

% Lines
	\foreach \a/\b in {1/8, 2/9, 3/10, 4/11}{
	  \draw (\a,0) -- (\b,6);
	}

% Crossings (white overlay + actual line)
	\foreach \a/\b in {5/1, 6/2, 7/3, 8/4, 9/5, 10/6, 11/7}{
	  \draw[white, line width=4pt] (\a-1/4,3/8) -- (\b+1/4,45/8);
	  \draw (\a,0) -- (\b,6);
	}

% Labels (upper and lower)
	\foreach /\x in {1,...,11}{
	  \node at (\x,6.5) {\tiny $\x$};
	  \node at (\x,-0.5) {\tiny $\x$};
	}

	\node at (6,-2) {$\mathbb{M}(\mathrm{ch}_{4/7}(L, LR)) = \mathbb{L}(4,7)$};
\end{scope}
\end{tikzpicture}
\caption{Examples defining torus knots.}
\end{figure}

\begin{theorem}\label{thm:Torus-knot-modular}
	For coprime positive integers $0 < p < q$, we have $\mathbb{M}(\mathrm{ch}_{p/q}(L, LR)) = \mathbb{L}(p,q)$. In other words, the modular knot associated with the word $\mathrm{ch}_{p/q}(L, LR)$ is the $(p,q)$-torus knot.
\end{theorem}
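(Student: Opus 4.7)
The plan is to identify $W := \mathrm{ch}_{p/q}(L, LR)$ with a rotation itinerary, use this to evaluate the rank function of its cyclic shifts, and then match the resulting braid permutation with that of $\mathbb{L}(p, q)$. First, I parametrize the lower boundary of $S(p/q)$ by the atomic-step index $j \in \{0, 1, \ldots, p+q\}$; by tracing the path, a short computation shows that after $j$ atomic steps the lattice path reaches $(j - y_j, y_j)$ with $y_j = \lfloor pj/(p+q) \rfloor$, and therefore the $j$-th letter of $W$ is $R$ iff $y_j - y_{j-1} = 1$ iff $(j-1)p \bmod (p+q) \ge q$. Setting $I_L = \{0, 1, \ldots, q-1\}$, $I_R = \{q, \ldots, p+q-1\}$, and $T\colon x \mapsto x + p$ on $\mathbb{Z}/(p+q)\mathbb{Z}$, the word $W$ is exactly the itinerary of $0$ under $T$ with respect to the partition $I_L \sqcup I_R$ (letter $L$ for $I_L$, $R$ for $I_R$), and the cyclic shift $\mathrm{cyc}^{j-1}(W)$ is the itinerary of the orbit point $(j-1)p \bmod (p+q)$.

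Next, I would show that for $x, y \in \{0, 1, \ldots, p+q-1\}$ with $x < y$, the itinerary of $x$ is strictly lexicographically less than the itinerary of $y$, by a one-step comparison plus recursion. If $x \in I_L$ and $y \in I_R$, the first letters $L$ and $R$ already distinguish them; otherwise, the first letters agree and $T$ restricted to each block is strictly order-preserving (sending $I_L$ into $\{p, \ldots, p+q-1\}$ via $z \mapsto z+p$ and $I_R$ into $\{0, \ldots, p-1\}$ via $z \mapsto z - q$), so I recurse on $T(x) < T(y)$. Since $|I_L| = q \neq p = |I_R|$, no nonzero cyclic shift of $\mathbb{Z}/(p+q)\mathbb{Z}$ preserves the partition $I_L \sqcup I_R$, so distinct orbit points produce distinct itineraries and the recursion terminates within $p+q$ steps. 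This yields the explicit formula
\[
\mathrm{rank}_j(W) = 1 + ((j-1)p \bmod (p+q)), \qquad 1 \le j \le p+q.
\]

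Finally, the defining relation $\pi(\mathrm{rank}_j(W)) = \mathrm{rank}_{j+1}(W)$ of the modular braid becomes $\pi(i) \equiv i + p \pmod{p+q}$, i.e., $\pi(i) = i+p$ for $1 \le i \le q$ and $\pi(i) = i - q$ for $q < i \le p+q$. By \cref{def:Lorenz-braid}, this is precisely the strand permutation of $\mathbb{L}(p, q)$: the $q$ left-top strands connect in order to the bottom $q$-part, and the $p$ right-top strands connect in order to the bottom $p$-part. Both braids are then constructed from this common permutation by straight-line strand connections with all crossings positive and the L-strands (top positions $1, \ldots, q$) passing over the R-strands, so they coincide as elements of $B_{p+q}$, giving $\mathbb{M}(\mathrm{ch}_{p/q}(L, LR)) = \mathbb{L}(p, q)$. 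I expect the main obstacle to be the first paragraph, namely the combinatorial translation from the snake-graph definition of $\mathrm{ch}_{p/q}(L, LR)$ into a rotation itinerary; once that dictionary is in place, the rank computation and the braid comparison are essentially mechanical.
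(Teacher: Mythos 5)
Your proof is correct and follows essentially the same route as the paper's: identify $\mathrm{ch}_{p/q}(L,LR)$ with the rotation word of slope $p/(p+q)$, derive $\mathrm{rank}_j(W) = 1 + \bigl((j-1)p \bmod (p+q)\bigr)$, and match this with the strand permutation of $\mathbb{L}(p,q)$. The only difference is that where the paper cites Aigner's Propositions 7.5(2) and 7.27 for the first two steps, you prove them directly (via the lattice-path floor-function computation and the order-preservation of $x \mapsto x+p$ on each block of the partition), which makes the argument self-contained.
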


\begin{proof}
	First, we show that $\mathrm{ch}_{p/q}(L, LR) = \mathrm{ch}_{p/(p+q)}(L, R)$. Note that both words contain $p$ occurrences of the letter $R$. By \cite[Proposition 7.5 (2)]{Aigner2013}, the position of the $j$-th $B$ in $\mathrm{ch}_{p/q}(A,B)$ ($1 \le j \le p$) is given by $\lceil qj/p \rceil$. Thus, in $\mathrm{ch}_{p/(p+q)}(L,R)$, the position of the $j$-th $R$ is $j+\lceil qj/p \rceil$, which agrees with that in $\mathrm{ch}_{p/q}(L,LR)$. This proves the claimed identity of the two Christoffel words.
	
	Next, by the proof of \cite[Proposition 7.27]{Aigner2013}, we have
	\[
		\mathrm{rank}_j(\mathrm{ch}_{p/(p+q)}(L, R)) \equiv p(j-1) + 1 \pmod{p+q}.
	\]
	Therefore, the modular braid for $\mathrm{ch}_{p/(p+q)}(L,R)$ coincides with the Lorenz braid $\mathbb{L}(p,q)$.
\end{proof}

\begin{example}
	The corresponding real quadratic irrationals and quadratic forms are as follows.
	\begin{align*}
		\mathrm{ch}_{2/3}(L, LR) = L^2RLR &\leftrightarrow \frac{3+2\sqrt{6}}{3} = [\overline{2,1,1,1}] \leftrightarrow 3x^2-6xy-5y^2,\\
		\mathrm{ch}_{2/5}(L, LR) = L^3RL^2R &\leftrightarrow \frac{3+2\sqrt{5}}{2} = [\overline{3,1,2,1}] \leftrightarrow 4x^2-12xy-11y^2,\\
		\mathrm{ch}_{3/4}(L, LR) = L^2RLRLR &\leftrightarrow \frac{4+\sqrt{42}}{4} = [\overline{2,1,1,1,1,1}] \leftrightarrow 8x^2-16xy-13y^2,\\
		\mathrm{ch}_{4/7}(L, LR) = L^2RL^2RL^2RLR &\leftrightarrow \frac{43+2\sqrt{1190}}{41} = [\overline{2,1,2,1,2,1,1,1}] \leftrightarrow 41x^2-86xy-71y^2.
	\end{align*}
\end{example}

\begin{example}
	For a positive integer $n$, we have $\mathrm{ch}_{n/(n+1)}(A, B) = AB^n$. The $(n,n+1)$-torus knot is represented by the word
	\[
		\mathrm{ch}_{n/(n+1)}(L, LR) = L(LR)^n.
	\]
	Let $F_n$ denote the $n$-th \emph{Fibonacci number} defined by $F_0 = F_1 = 1$ and $F_n = F_{n-1} + F_{n-2}$. The corresponding real quadratic irrational is given by
	\[
		w_n = [\overline{2, 1^{2n-1}}] = 1 + \sqrt{\frac{F_{2n+1}}{F_{2n-1}}},
	\]
	and the associated binary quadratic form is $Q_n(x,y) = F_{2n-1} x^2 + (F_{2n-2} - F_{2n+1}) xy - F_{2n} y^2$. 
	
	Since $w_n$ converges to $(3+\sqrt{5})/2 = [2,1, \overline{1,1}]$ as $n \to \infty$, we may ask whether the $(n,n+1)$-torus knot ``approaches", in some suitable sense, to the modular knot associated with the word $LR$, namely the trivial knot. This situation is reminiscent of the continuity phenomenon for integrals of the elliptic modular $j$-function along closed geodesics parametrized by Markov irrationals (related to the Christoffel words), conjectured by Kaneko~\cite{Kaneko2009} and later investigated by Bengoechea--Imamo\={g}lu~\cite{BengoecheaImamoglu2019} and Murakami~\cite{Murakami2021}.
\end{example}

\section{Some invariants of modular knots}\label{sec:topological}
% --------------------------------------------------------------------------

% --------------------------------------------------------------------------
\subsection{Braid index}\label{sec:braid-index}
% --------------------------------------------------------------------------

In \cref{rem:T-braid}, we introduced the fact that for a modular braid, or more generally for a Lorenz braid, there exists an element in a smaller braid group that has the same closure. How small can the size of such a braid group be? The minimal number of strands required to represent a knot or a link as the closure of a braid is called the \emph{braid index}. For a Lorenz braid, it is known that the braid index can be computed using the following trip number.

\begin{definition}
	For a Lorenz braid $\mathbb{L} = \mathbb{L}(p_1, q_1; \dots; p_r, q_r)$, we assign the letter $L$ to each $1 \le j \le Q_r$ and the letter $R$ to each $Q_r < j \le P_r + Q_r$. The number of strands connecting an $L$ on the top to an $R$ on the bottom is called the \emph{trip number} of $\mathbb{L}$, denoted by $\mathrm{trip}(\mathbb{L})$.
\end{definition}

As shown in~\cite[Lemma 1]{BirmanKofman2009}, the trip number coincides with the braid index of the Lorenz link $\widehat{\mathbb{L}}$, which was conjectured in~\cite{BirmanWilliams1983} and first proved in~\cite{FranksWilliams1987}. For a modular braid $\mathbb{M}(W)$, the trip number equals the number of subword $LR$ contained in the word $W$. Equivalently, it is equal to half the length of the period of the continued fraction expansion of the corresponding real quadratic irrational. We restate this in the form of a theorem below.

\begin{theorem}%[\cite{BirmanKofman2009}]
	For a real quadratic irrational $w = [\overline{c_1, c_2, \dots, c_{2\ell}}]$ with minimal even period $2\ell$, the braid index of the modular knot $\widehat{\mathbb{M}}(w)$ is given by $\ell$.
\end{theorem}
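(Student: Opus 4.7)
The plan is to reduce the statement to the two inputs highlighted in the paragraph immediately preceding it, and then perform a short combinatorial count.

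First, I would write down the word $W$ associated with $w$ via \cref{lem:cont-word-corr}, namely
\[
W = L^{c_1} R^{c_2} L^{c_3} R^{c_4} \cdots L^{c_{2\ell-1}} R^{c_{2\ell}},
\]
so that by \cref{prop:two-modular-braids} the modular knot $\widehat{\mathbb{M}}(w)$ is the closure of the modular braid $\mathbb{M}(W)$. Since every modular braid is in particular a Lorenz braid, the Franks--Williams theorem \cite{FranksWilliams1987}, in the form packaged by Birman--Kofman \cite[Lemma~1]{BirmanKofman2009}, identifies the braid index of $\widehat{\mathbb{M}}(w)$ with the trip number $\mathrm{trip}(\mathbb{M}(W))$. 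This is the main input and is taken as given.

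Next, using the combinatorial description of the trip number already recorded in the text---that $\mathrm{trip}(\mathbb{M}(W))$ equals the number of occurrences of the factor $LR$ in the cyclic word $W$---I would simply count occurrences in the explicit form of $W$ above. Each of the $\ell$ blocks $L^{c_{2j-1}} R^{c_{2j}}$ (for $1 \le j \le \ell$) contributes exactly one $LR$ factor at the interface between its $L$-run and its $R$-run, while each of the complementary seams---between $R^{c_{2j}}$ and $L^{c_{2j+1}}$, and cyclically between $R^{c_{2\ell}}$ and $L^{c_1}$---contributes an $RL$ and hence nothing. The total is therefore $\ell$, yielding the claim.

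There is no substantial obstacle: the content of the theorem is essentially a repackaging of the two cited ingredients. The only point that warrants verification is that the convention ``minimal even period $2\ell$'', which doubles the minimal period when its length $m$ is odd, still produces a word of the standard alternating form $L^{c_1} R^{c_2} \cdots L^{c_{2\ell-1}} R^{c_{2\ell}}$, so that the $LR$-count is unambiguously $\ell$. This is immediate from the alternation rule---odd positions carry $L$, even positions carry $R$---since doubling an odd-length period is precisely what restores the $L$/$R$ parity across the cyclic seam, so the count $\ell$ is intrinsic and independent of where in the period one starts reading.
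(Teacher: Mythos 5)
Your proposal is correct and follows exactly the route the paper itself takes: the theorem is presented as a restatement of the preceding paragraph, where the braid index is identified with the trip number via \cite{FranksWilliams1987} and \cite[Lemma 1]{BirmanKofman2009}, and the trip number is identified with the number of $LR$ factors in the cyclic word, which is $\ell$. Your extra remark on the doubled-period convention is a harmless verification that the paper leaves implicit.
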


\begin{example}\label{ex:trip-number}
	The trip number of the Lorenz braid $\mathbb{L}$ shown in~\cref{Lorenz-braid-example} corresponds to the three thickest strands and is therefore $\mathrm{trip}(\mathbb{L}) = 3$. As seen in \eqref{eq:Lorenz-LR-word} and \eqref{eq:Lorenz-cont-frac}, the corresponding word is $W = L^4 R^3 L^4 R^3 LR^2$ and the associated real quadratic irrational is $w = [\overline{4,3,4,3,1,2}]$. Both the number of subwords $LR$ and half the length of the period of its continued fraction expansion are equal to $3$. 
\end{example}

% --------------------------------------------------------------------------
\subsection{Class numbers of imaginary quadratic fields}
% --------------------------------------------------------------------------

Let $p > 3$ be a prime number satisfying $p \equiv 3 \pmod{4}$. Consider the continued fraction expansion
\[
	\sqrt{p} = [c_0, \overline{c_1, c_2, \dots, c_{2\ell}}],
\]
which corresponds to the word $L^{c_2} R^{c_3} \cdots L^{c_{2\ell}} R^{c_1}$. We then consider the modular braid $\mathbb{M}(\sqrt{p})$ obtained from this word.

\begin{example}
	For
	\[
		\sqrt{7} = [2, \overline{1,1,1,4}],
	\]
	we consider the word $W = LRL^4R$. Its cyclic permutations are given by
	\[
	\begin{array}{c|ccccccc}
		\mathrm{cyc}^{\circ (j-1)}(W) & LRL^4R & RL^4RL & L^4RLR & L^3RLRL & L^2RLRL^2 & LRLRL^3 & RLRL^4\\
		\hline
		\mathrm{rank}_j(W) & 4 & 6 & 1 & 2 & 3 & 5 & 7
	\end{array}
	\]
	The corresponding braid $\mathbb{M}(\sqrt{7})$ is given as follows. Similarly, the modular braid for $\sqrt{31} = [5, \overline{1,1,3,5,3,1,1,10}]$ is given as follows.
	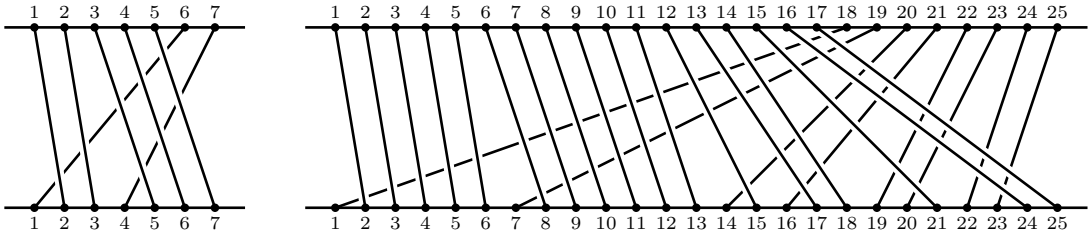
\begin{figure}[H]
	\centering
	\begin{tikzpicture}[line width=1pt, scale=0.4]
	\begin{scope}[shift={(0,0)}]
	% lower and upper lines
		\draw (0,0) -- (8,0);
		\draw (0,6) -- (8,6);
	
	% dots
		\def \dotPt {4pt}; 
		\foreach \x in {1,...,7}{\fill (\x,0) circle (\dotPt);}
		\foreach \x in {1,...,7}{\fill (\x,6) circle (\dotPt);}
	
	% lines
		\draw (1,0) -- (6,6);
		\draw (4,0) -- (7,6);
		
		\draw[white, line width=4pt] (2-1/16,3/8) -- (1+1/16, 45/8);
		\draw (2,0) -- (1,6);
		\draw[white, line width=4pt] (3-1/16,0+3/8) -- (2+1/16,45/8); 
		\draw (3,0) -- (2,6);
		\draw[white, line width=4pt] (5-1/8,0+3/8) -- (3+1/8,45/8); 
		\draw (5,0) -- (3,6);
		\draw[white, line width=4pt] (6-1/8,0+3/8) -- (4+1/8,45/8); 
		\draw (6,0) -- (4,6);
		\draw[white, line width=4pt] (7-1/8,0+3/8) -- (5+1/8,45/8); 
		\draw (7,0) -- (5,6);
		
	% texts
		\foreach \x in {1,...,7}{\node at (\x,6.5) {\tiny $\x$};}
		\foreach \x in {1,...,7}{\node at (\x,-0.5) {\tiny $\x$};}
	\end{scope}

	\begin{scope}[shift={(10,0)}]
		
	% dots
		\def \dotPt {4pt}; 
		\foreach \x in {1,...,25}{\fill (\x,0) circle (\dotPt);}
		\foreach \x in {1,...,25}{\fill (\x,6) circle (\dotPt);}
	
	% lines
		\draw (1,0) -- (18,6);
		\draw (7,0) -- (19,6);
		\draw (14,0) -- (20,6);
		\draw (16,0) -- (21,6);
		\draw (19,0) -- (22,6);
		\draw (20,0) -- (23,6);
		\draw (22,0) -- (24,6);
		\draw (23,0) -- (25,6);
		
		\draw[white, line width=4pt] (2-1/32,0+3/16) -- (1+1/32,93/16); 
		\draw (2,0) -- (1,6);
		\draw[white, line width=4pt] (3-1/32,0+3/16) -- (2+1/32,93/16); 
		\draw (3,0) -- (2,6);
		\draw[white, line width=4pt] (4-1/32,0+3/16) -- (3+1/32,93/16); 
		\draw (4,0) -- (3,6);
		\draw[white, line width=4pt] (5-1/32,0+3/16) -- (4+1/32,93/16); 
		\draw (5,0) -- (4,6);
		\draw[white, line width=4pt] (6-1/32,0+3/16) -- (5+1/32,93/16); 
		\draw (6,0) -- (5,6);
		\draw[white, line width=4pt] (8-1/16,0+3/16) -- (6+1/16,93/16); 
		\draw (8,0) -- (6,6);
		\draw[white, line width=4pt] (9-1/16,0+3/16) -- (7+1/16,93/16); 
		\draw (9,0) -- (7,6);
		\draw[white, line width=4pt] (10-1/16,0+3/16) -- (8+1/16,93/16); 
		\draw (10,0) -- (8,6);
		\draw[white, line width=4pt] (11-1/16,0+3/16) -- (9+1/16,93/16); 
		\draw (11,0) -- (9,6);
		\draw[white, line width=4pt] (12-1/16,0+3/16) -- (10+1/16,93/16); 
		\draw (12,0) -- (10,6);
		\draw[white, line width=4pt] (13-1/16,0+3/16) -- (11+1/16,93/16); 
		\draw (13,0) -- (11,6);
		\draw[white, line width=4pt] (15-1/8,1/4) -- (12+1/8,23/4); 
		\draw (15,0) -- (12,6);
		\draw[white, line width=4pt] (17-1/8,3/16) -- (13+1/8,93/16); 
		\draw (17,0) -- (13,6);
		\draw[white, line width=4pt] (18-1/8,3/16) -- (14+1/8,93/16); 
		\draw (18,0) -- (14,6);
		\draw[white, line width=4pt] (21-1/4,1/4) -- (15+1/4,23/4); 
		\draw (21,0) -- (15,6);
		\draw[white, line width=4pt] (24-1/3,1/4) -- (16+1/3,23/4); 
		\draw (24,0) -- (16,6);
		\draw[white, line width=4pt] (25-1/3,1/4) -- (17+1/8,189/32); 
		\draw (25,0) -- (17,6);
	% texts
		\foreach \x in {1,...,25}{\node at (\x,6.5) {\tiny $\x$};}
		\foreach \x in {1,...,25}{\node at (\x,-0.5) {\tiny $\x$};}
		
		% lower and upper lines
		\draw (0,0) -- (26,0);
		\draw (0,6) -- (26,6);
	\end{scope}

	\end{tikzpicture}
		\caption{Modular braids $\mathbb{M}(\sqrt{7}) = \mathbb{L}(1,2; 1,3)$ and $\mathbb{M}(\sqrt{31}) = \mathbb{L}(1,5;1,6;1,1;1,2;2,1;2,2)$.}
		\label{Lorenz-7}
	\end{figure}
\end{example}

Then the result concerning the class number $h(-p)$ of the imaginary quadratic field $\Q(\sqrt{-p})$, due to Hirzebruch and Zagier, can be restated as follows.

\begin{theorem}[\cite{Hirzebruch1973, Zagier1975}]\label{thm:HZ}
	For a prime $3 < p \equiv 3 \pmod{4}$, write $\mathbb{M}(\sqrt{p}) = \mathbb{L}(p_1, q_1; \dots; p_r, q_r)$. If the class number of the real quadratic field $\Q(\sqrt{p})$ in wide sense is equal to 1, then we have
	\[
		h(-p) = \frac{1}{3} (Q_r - P_r),
	\]
	where $P_r = p_1 + \cdots + p_r, Q_r = q_1 + \cdots + q_r$.
\end{theorem}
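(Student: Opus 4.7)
My plan is to deduce the formula directly from the classical Hirzebruch--Zagier class number theorem by translating it into Lorenz-braid language. Under the hypotheses of the theorem, the results of Hirzebruch~\cite{Hirzebruch1973} and Zagier~\cite{Zagier1975} assert that
\[
    h(-p) = \frac{1}{3}\sum_{j=1}^{2\ell}(-1)^{j} c_{j},
\]
so I would reduce the statement to checking that the right-hand side equals $(Q_r - P_r)/3$.

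First I would perform a direct letter count in the word $W = L^{c_2}R^{c_3}L^{c_4}R^{c_5}\cdots L^{c_{2\ell}}R^{c_1}$ assigned to $\sqrt{p}$. By \cref{def:Lorenz-braid} the top of the Lorenz braid $\mathbb{L}(p_1,q_1;\dots;p_r,q_r)$ consists of a left ``$L$'' block of size $Q_r$ and a right ``$R$'' block of size $P_r$, and by the equivalence between modular words and Lorenz braids used throughout \cref{sec:modular-knot} and \cref{sec:Lorenz-braid}, these sizes coincide with the total number of $L$'s and of $R$'s in the associated word. Reading off the exponents of $W$ then gives
\[
    Q_r = c_2 + c_4 + \cdots + c_{2\ell}, \qquad P_r = c_1 + c_3 + \cdots + c_{2\ell-1},
\]
so that $Q_r - P_r = \sum_{j=1}^{2\ell}(-1)^{j} c_{j}$. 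Substituting into the Hirzebruch--Zagier formula then closes the argument.

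The hard part is not in the present proof but in the Hirzebruch--Zagier formula itself, which we invoke as a black box; its original derivation uses the signature defect for cusp resolutions of Hilbert modular surfaces and the Atiyah--Singer $G$-signature theorem. The remaining bookkeeping is routine: a small sanity check (e.g.\ the example $\sqrt{7}=[2,\overline{1,1,1,4}]$ giving $(Q_r,P_r)=(5,2)$ and $Q_r-P_r=3=3\,h(-7)$) pins down the sign convention, and one observes that the cyclic shift implicit in $W$, introduced because the non-periodic part of $\sqrt{p}$ has odd length one, does not alter the total $L$- and $R$-counts and hence does not affect $Q_r - P_r$.
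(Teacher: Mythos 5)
Your proposal is correct and follows essentially the same route as the paper's proof: both invoke the Hirzebruch--Zagier formula $h(-p)=\tfrac{1}{3}\sum_{j=1}^{2\ell}(-1)^{j}c_{j}$ as the external input and reduce the theorem to the observation that $Q_r$ and $P_r$ count the $L$'s and $R$'s in the word, so that $Q_r-P_r=\sum_{j=1}^{2\ell}(-1)^{j}c_{j}$. The sanity check with $\sqrt{7}$ matches the example given immediately after the theorem in the paper.
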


\begin{proof}
	Since $Q_r$ is the number of $L$'s and $P_r$ is the number of $R$'s in the word representation, we can express
	\[
		Q_r - P_r = \sum_{j=1}^{2\ell} (-1)^j c_j
	\]
	using the coefficients of the continued fraction expansion of $\sqrt{p}$. Comparing this with the result by Hirzebruch~\cite{Hirzebruch1973} and Zagier~\cite{Zagier1975},
	\[
		h(-p) = \frac{1}{3} \sum_{j=1}^{2\ell} (-1)^j c_j,
	\]
	we obtain the desired result.
\end{proof}

For the above examples, (using the fact that the class numbers of $\Q(\sqrt{7})$ and $\Q(\sqrt{31})$ are both 1), we obtain
\begin{align*}
	h(-7) = \frac{5-2}{3} = 1,\qquad h(-31) = \frac{17 - 8}{3} = 3.
\end{align*}

We make a further remark on this result. For a real quadratic irrational $w = [\overline{c_1, c_2, \dots, c_{2\ell}}]$, by associating the matrix
\[
	\gamma_w = T^{c_1} V^{c_2} \cdots V^{c_{2\ell}} \in \SL_2(\Z)
\]
in the same way as in the proof of~\cref{lem:cont-word-corr}, Rademacher~\cite{Rademacher1956, Rademacher 1972} proved the following. Let $\log \Delta(z)$ denote the logarithm of the discriminant cusp form $\Delta(z)$, (essentially the same as the Dedekind eta function), defined by
\[
	\log \Delta(z) = 2\pi iz - 24\sum_{n=1}^\infty \sum_{d \mid n} d^{-1} e^{2\pi inz} \qquad (z \in \bbH).
\]
Then the modular transformation under the action of $\gamma_w$ is given by
\[
	\log \Delta(\gamma_w z) - \log \Delta(z) = 12 \log j(\gamma_w, z) + 2\pi i \Psi(\gamma_w),
\]
where $j(\smat{a & b \\ c & d}, z) = cz+d$ is the usual automorphic factor (assuming $\Im \log z \in (-\pi, \pi)$), and $\Psi(\gamma_w)$ is given by the alternating sum
\[
	\Psi(\gamma_w) = \sum_{j=1}^{2\ell} (-1)^{j-1} c_j.
\]
In general, we can define the function $\Psi: \SL_2(\Z) \to \Z$ on the whole of $\SL_2(\Z)$, called the \emph{Rademacher symbol}, (see also Atiyah's omnibus theorem~\cite[Theorem 5.60]{Atiyah1987}). Ghys~\cite{Ghys2006} characterized modular knots in terms of the geodesic flow, and showed that when realized in the trefoil complement $\SL_2(\Z) \backslash \SL_2(\R) \cong S^3 - K_{2,3}$, the above alternating sum coincides with the linking number $\mathrm{link}(\widehat{\mathbb{M}}(w), K_{2,3})$. By comparing this with the expression of the Rademacher symbol described above, he established the identity $\mathrm{link}(\widehat{\mathbb{M}}(w), K_{2,3}) = \Psi(\gamma_w)$.

\begin{remark}
	In the case $p \equiv 1 \pmod{4}$, Kaneko--Mizuno~\cite{KanekoMizuno2020} gave several analogous formulas, which can also be reformulated in terms of modular braids.
\end{remark}

% --------------------------------------------------------------------------
\section{Alexander polynomials}\label{sec:Alexander}
% --------------------------------------------------------------------------

The main purpose of this section is to review the definition of the Alexander polynomial using the Burau representation (\cref{def:Alexander-poly}) and to compute the Burau representation explicitly for Lorenz braids (\cref{thm:Burau-explicit}).

% --------------------------------------------------------------------------
\subsection{Burau representations}\label{sec:Burau}
% --------------------------------------------------------------------------

Following Burau's result~\cite{Burau1935}, (see also Birman~\cite[Theorem 3.11]{Birman1974}, Kassel--Turaev~\cite[Theorem 3.13]{KasselTuraev2008}), we define the Alexander polynomial. Here, we present two equivalent definitions, one using the Burau representation itself, and the other using its irreducible part.

\begin{definition}\label{def:Burau-rep}
	The \emph{Burau representation} $\beta_n: B_n \to \GL_n(\Z[t, t^{-1}])$ is defined by
	\[
		\beta_n(\sigma_j) = I_{j-1} \oplus \pmat{1-t & t \\ 1 & 0} \oplus I_{n-j-1}.
	\]
\end{definition}

For any $\sigma \in B_n$, we have $\beta_n(\sigma) v = v$, where $v = {}^t(1,1,\dots, 1)$, that is, $\beta_n(\sigma)$ has the eigenvalue $1$. In particular, the Burau representation is reducible, and we often consider the following $(n-1)$-dimensional irreducible representation $\beta_n^r: B_n \to \GL_{n-1}(\Z[t,t^{-1}])$, called the \emph{reduced Burau representation}:
\[
	\beta_n^r(\sigma_j) = \begin{cases}
		\pmat{-t & 0 \\ 1 & 1} \oplus I_{n-3} &\text{if } j=1,\\
		I_{j-2} \oplus \pmat{1 & t & 0 \\ 0 & -t & 0 \\ 0 & 1 & 1} \oplus I_{n-j-2} &\text{if } 1 < j < n-1,\\
		I_{n-3} \oplus \pmat{1 & t \\ 0 & -t} &\text{if } j=n-1,
	\end{cases}
\]
(see \cite[Theorem 3.9]{KasselTuraev2008} for instance).

\begin{definition}\label{def:Alexander-poly}
	For a braid $\sigma \in B_n$, we assume that its closure $\widehat{\sigma}$ defines a knot. The \emph{Alexander polynomial} $\Delta_{\widehat{\sigma}}(t) \in \Z[t,t^{-1}]$ of a knot $\widehat{\sigma}$ is defined by
	\begin{align}\label{def:Alexander-second}
		\Delta_{\widehat{\sigma}}(t) &= \frac{1}{[n]_t} \lim_{s \to 1} \frac{\det(s I_n - \beta_n(\sigma))}{s-1} \nonumber\\
			&= \frac{1}{[n]_t} \det(I_{n-1} - \beta_n^r(\sigma)),
	\end{align}
	where $[n]_t \coloneqq 1 + t + t^2 + \cdots + t^{n-1}$.
\end{definition}

In this section, we focus on the representation $\beta_n$ rather than on its reduced version.

\begin{remark}\label{rem:Alex-def}
	It is well known that our definition of the Alexander polynomial using the Burau representation is equivalent to other standard definitions of the Alexander polynomial up to multiplication by units in $\Z[t,t^{-1}]$. 
%	See, for example, \cite[Theorem 3.11]{Birman1974}.
%	The definition in~\cite{Birman1974, KasselTuraev2008} agrees with ours up to multiplication by a unit in $\Z[t, t^{-1}]$.
\end{remark}

\begin{remark}
	Okamoto~\cite[Theorem 2.1.1]{Okamoto2018} investigated the function $\det(I_n - \beta_n(\sigma) s)^{-1}$, which he referred to as the \emph{braid zeta function}. He also studied the Jones polynomials, the HOMFLY polynomials, and other invariants from the viewpoint of zeta functions.
\end{remark}

The following symmetry is a basic property of the Alexander polynomials.

\begin{lemma}\label{lem:sym-Alexander}
	For $\sigma \in B_n$ whose closure is a knot, let $|\sigma| \in \Z$ denote the sum of the exponents when $\sigma$ is expressed as a product of the generators, so that $\det(\beta_n(\sigma)) = (-t)^{|\sigma|}$. Then, we have
	\[
		\Delta_{\widehat{\sigma}}(1/t) = (-t)^{-|\sigma|+n-1} \Delta_{\widehat{\sigma}}(t).
	\]
\end{lemma}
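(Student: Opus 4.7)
The plan is to reduce the symmetry to the ``unitarity'' of the reduced Burau representation together with a short determinant computation. First I would rewrite the polynomial using the reduced form given in the second equality of the definition,
\[
\Delta_{\widehat{\sigma}}(t) = \frac{1}{[n]_t}\det(I_{n-1}-\beta_n^r(\sigma)(t)),
\]
and note that $[n]_{1/t} = t^{-(n-1)}[n]_t$. Thus the claim reduces to comparing $\det(I_{n-1}-\beta_n^r(\sigma)(1/t))$ with $\det(I_{n-1}-\beta_n^r(\sigma)(t))$.

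Next I would invoke the classical unitarity of the reduced Burau representation (due to Squier; see, e.g., Kassel--Turaev~\cite[Theorem 3.15]{KasselTuraev2008}): there exists a nondegenerate matrix $J_n(t)$ such that
\[
\beta_n^r(\sigma)(1/t)^T\, J_n(t)\, \beta_n^r(\sigma)(t) = J_n(t)
\]
for every $\sigma\in B_n$. Rearranging shows that $\beta_n^r(\sigma)(1/t)^T$ is conjugate to $\beta_n^r(\sigma)(t)^{-1}$, so the two matrices have equal characteristic polynomials. Evaluating at $s=1$ yields
\[
\det(I_{n-1}-\beta_n^r(\sigma)(1/t)) = \det(I_{n-1}-\beta_n^r(\sigma)(t)^{-1}).
\]
Combining this with the elementary identity $\det(I-M^{-1}) = (-1)^{n-1}(\det M)^{-1}\det(I-M)$ for any invertible $(n-1)\times(n-1)$ matrix $M$, together with the case-by-case verification that $\det(\beta_n^r(\sigma_j)) = -t$ for each generator (so $\det(\beta_n^r(\sigma)) = (-t)^{|\sigma|}$, the reduced analogue of the formula for $\beta_n$ stated in the lemma), and pulling out the factor $t^{n-1}$ coming from $[n]_{1/t}$, one assembles exactly the claimed identity $\Delta_{\widehat{\sigma}}(1/t) = (-t)^{n-1-|\sigma|}\Delta_{\widehat{\sigma}}(t)$.

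The main obstacle is the unitarity statement, which is a nontrivial input; we would quote it rather than reprove it. An alternative route would be to bypass unitarity altogether by appealing to the classical symmetry of the Alexander polynomial coming from a Seifert matrix presentation (compare \cref{rem:Alex-def}) and then merely tracking the normalizing factors $[n]_t$ and $\det(\beta_n(\sigma)) = (-t)^{|\sigma|}$ to pin down the exact power of $(-t)$; this second route may be preferable if one wishes to keep the proof internal to the normalization conventions already fixed in this paper.
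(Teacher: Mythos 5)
Your proof is correct, but it takes a route parallel to, rather than identical with, the paper's. The paper works with the unreduced representation and the first expression in \cref{def:Alexander-poly}: it quotes from Kassel--Turaev the identity $\beta_n(\sigma)\big|_{t\to 1/t}=\Omega_n\,{}^t\beta_n(\sigma^{-1})\,\Omega_n^{-1}$ for an explicit lower-triangular matrix $\Omega_n$ over $\Z[t,t^{-1}]$, deduces
\[
\det(sI_n-\beta_n(\sigma))\big|_{t\to 1/t}=(-s)^n\det(\beta_n(\sigma))^{-1}\det(s^{-1}I_n-\beta_n(\sigma)),
\]
and reads off the lemma. You instead use the reduced representation and Squier's unitarity as the key input. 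The two inputs are two packagings of the same structural fact, namely that inverting $t$ turns the Burau representation into a conjugate of its inverse-transpose, and your subsequent bookkeeping is all correct: $[n]_{1/t}=t^{-(n-1)}[n]_t$, the identity $\det(I-M^{-1})=(-1)^{n-1}\det(M)^{-1}\det(I-M)$ for $(n-1)\times(n-1)$ matrices, and $\det\beta_n^r(\sigma)=(-t)^{|\sigma|}$ assemble to exactly the exponent $(-t)^{n-1-|\sigma|}$. One small caution on your route: Squier's form is usually stated for a diagonally rescaled representation over $\Z[s^{\pm1}]$ with $s^2=t$; this is harmless (a diagonal conjugation only replaces the form $J$ by another nondegenerate one, and nondegeneracy over the fraction field is all you need to compare characteristic polynomials), but it is an extra point to address that the paper's $\Omega_n$-identity, which lives entirely over $\Z[t,t^{-1}]$, lets one skip. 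In return, your version makes the power of $(-t)$ completely explicit, whereas the paper's proof ends with ``this implies the desired result.'' Your proposed fallback through a Seifert matrix would also work but would require reconciling normalizations across definitions, which the Burau-internal computation avoids.
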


\begin{proof}
	Let $\Omega_n$ be the $n \times n$ lower triangular matrix whose diagonal entries are $1$ and all strictly lower-triangular entries are $1-t$. That is,
	\[
		\Omega_n = \pmat{1 & 0 & \cdots & 0 \\ 1-t & 1 & \ddots & \vdots \\ \vdots & \ddots & \ddots & 0 \\ 1-t & \cdots & 1-t & 1} \in \GL_n(\Z[t,t^{-1}]).
	\]
	Then, it is known that
	\[
		\beta_n(\sigma) \bigg|_{t \to 1/t} = \Omega_n {}^t \beta_n(\sigma^{-1}) \Omega_n^{-1},
	\]
	(see \cite[Theorem 3.1]{KasselTuraev2008}). Therefore, we obtain
	\begin{align*}
		\det(sI_n - \beta_n(\sigma)) \bigg|_{t \to 1/t} &= \det(sI_n - {}^t \beta_n(\sigma^{-1}))\\
			&= (-s)^n \det(\beta_n(\sigma))^{-1} \det(s^{-1}I_n - \beta_n(\sigma)).
	\end{align*}
	This implies the desired result.
\end{proof}

In this article, the Alexander polynomial is defined only for knots. Whether a braid $\sigma \in B_n$ determines a knot can be naturally reformulated in terms of the Burau representation as follows.

Let $\mathfrak{S}_n$ be the symmetric group of degree $n$ and $\pi: B_n \to \mathfrak{S}_n$ denote the natural projection defined by $\pi(\sigma_j) = (j, j+1)$. By definition, the Burau representation evaluated at $t=1$ is reduced to the permutation representation $p_n: \mathfrak{S}_n \to \GL_n(\Z)$, that is,
\[
	\lim_{t \to 1} \beta_n(\sigma) = p_n(\pi(\sigma))
\]
holds. 

\begin{lemma}
	For a permutation $\pi \in \mathfrak{S}_n$, we have
	\[
		\det(s I_n - p_n(\pi)) = \prod_{P \in \mathrm{Cycle}(\pi)} (s^{l(P)} - 1),
	\]
	where $\mathrm{Cycle}(\pi)$ denotes the set of cycles in a decomposition of $\pi$ into disjoint cyclic permutations, and $l(P)$ denotes the length of a cycle $P$.
\end{lemma}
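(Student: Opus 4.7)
The plan is to block-diagonalize the permutation matrix $p_n(\pi)$ according to the cycle decomposition of $\pi$, and then compute the characteristic polynomial of each block individually. Write $\pi$ as a disjoint product of its cycles $P_1, \ldots, P_k$, where $P_i$ acts on a subset $S_i \subseteq \{1, \ldots, n\}$ of size $\ell_i = l(P_i)$. Reordering the standard basis of $\Z^n$ so that the basis vectors indexed by $S_1$ come first, those indexed by $S_2$ come next, and so on, conjugates $p_n(\pi)$ into the block-diagonal form $C_{\ell_1} \oplus \cdots \oplus C_{\ell_k}$, where $C_\ell$ denotes the $\ell \times \ell$ cyclic shift matrix. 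Since conjugation preserves the characteristic polynomial,
\[
\det(sI_n - p_n(\pi)) = \prod_{i=1}^{k} \det(sI_{\ell_i} - C_{\ell_i}).
\]

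It therefore suffices to verify the identity $\det(sI_\ell - C_\ell) = s^\ell - 1$. The matrix $sI_\ell - C_\ell$ has $s$ on the diagonal, $-1$ on the subdiagonal, and a single $-1$ in the top-right corner, with zeros elsewhere. A Laplace expansion along the first row contributes $s \cdot s^{\ell-1}$ from the diagonal entry (the remaining minor is lower-triangular with diagonal $s$), and $-1$ from the top-right entry after tracking the cofactor sign. Alternatively, one may recognize $C_\ell$ as the companion matrix of $x^\ell - 1$, or diagonalize $C_\ell$ over $\C$ using the $\ell$-th roots of unity as eigenvalues to obtain the same result.

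Multiplying the factors $s^{\ell_i} - 1$ over all cycles $P_i \in \mathrm{Cycle}(\pi)$ yields the stated formula. Fixed points of $\pi$, counted as length-one cycles, contribute factors $s - 1$, which agrees with the $1 \times 1$ identity blocks they produce. No substantive obstacle arises in this argument; it is elementary linear algebra once the block structure is identified.
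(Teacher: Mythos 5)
Your proof is correct and follows essentially the same route as the paper: both arguments conjugate $p_n(\pi)$ into a block-diagonal sum of cyclic shift matrices $C_{\ell}$ according to the cycle decomposition and then use $\det(sI_\ell - C_\ell) = s^\ell - 1$. The only difference is that you supply the short computation of this last determinant explicitly, which the paper leaves as an assertion.
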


\begin{proof}
	Let $\pi \in \mathfrak{S}_n$ be written as a product of disjoint cycles $\pi = P_1 \cdots P_r$, and let $l_j = l(P_j)$ denote the length of each cycle. Then there exists a suitable $\rho \in \mathfrak{S}_n$ such that
	\[
		\rho^{-1} \pi \rho = (1, \dots, l_1) (l_1+1, \dots, l_1+l_2) \cdots (l_1 + \cdots + l_{r-1} + 1, \dots, n),
	\]
	which is a product of cycles consisting of consecutive integers. It follows that
	\[
		p_n(\rho^{-1} \pi \rho) = \mathrm{diag}(C_{l_1}, \dots, C_{l_r}),
	\]
	where each $C_l$ is an $l \times l$ matrix of the form
	\[
		C_l = \pmat{0 & 1 &  &  \\ & 0 & \ddots \\  & & \ddots & 1 \\ 1 & & & 0}.
	\]
	Since the determinant is invariant under conjugation, we obtain
	\begin{align*}
		\det(s I_n - p_n(\pi)) &= \det(s I_n - \mathrm{diag}(C_{l_1}, \dots, C_{l_r}))\\
			&= \prod_{j=1}^r \det(sI_{l_j} - C_{l_j}) = \prod_{j=1}^r (s^{l_j} - 1),
	\end{align*}
	which proves the claim.
\end{proof}

\begin{corollary}\label{cor:component-number}
	For a braid $\sigma \in B_n$, the number of components of its closure $\widehat{\sigma}$ equals the multiplicity of the eigenvalue $1$ of the matrix 
	\[
		\lim_{t \to 1} \beta_n(\sigma) = p_n(\pi(\sigma)).
	\]
\end{corollary}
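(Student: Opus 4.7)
The plan is to combine the preceding lemma with a direct combinatorial identification of the components of $\widehat{\sigma}$ with the cycles of $\pi(\sigma)$.

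First, I would argue that the number of components of $\widehat{\sigma}$ equals the number of cycles in the disjoint cycle decomposition of the permutation $\pi(\sigma) \in \mathfrak{S}_n$. Indeed, by the construction of the closure, the $i$-th top endpoint is glued to the $i$-th bottom endpoint for every $1 \le i \le n$. Tracing a single strand from top to bottom, the $i$-th top endpoint is connected through the braid to the $\pi(\sigma)(i)$-th bottom endpoint, which after closure is identified with the $\pi(\sigma)(i)$-th top endpoint. Iterating, two indices $i$ and $j$ lie on the same component of $\widehat{\sigma}$ if and only if they lie in the same $\pi(\sigma)$-orbit. Thus the components of $\widehat{\sigma}$ are in bijection with $\mathrm{Cycle}(\pi(\sigma))$.

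Next, I would apply the preceding lemma to $\pi = \pi(\sigma)$, obtaining
\[
    \det(sI_n - p_n(\pi(\sigma))) = \prod_{P \in \mathrm{Cycle}(\pi(\sigma))} (s^{l(P)} - 1).
\]
For each cycle $P$, the factor $s^{l(P)} - 1$ factors as $(s-1)(s^{l(P)-1} + s^{l(P)-2} + \cdots + 1)$, and the second factor evaluated at $s=1$ equals $l(P) \neq 0$. Hence every factor contributes the root $s=1$ with multiplicity exactly one, and the total multiplicity of $1$ as an eigenvalue of $p_n(\pi(\sigma))$ is $\#\mathrm{Cycle}(\pi(\sigma))$. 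Since $\lim_{t \to 1} \beta_n(\sigma) = p_n(\pi(\sigma))$ by the remark preceding the lemma, combining with the first step yields the claim.

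There is no serious obstacle here, the result is essentially a bookkeeping consequence of the cycle-factorization lemma; the only point requiring care is the verification that the second factor $s^{l(P)-1} + \cdots + 1$ does not vanish at $s=1$, so that each cyclic block contributes a simple root rather than a higher-order one.
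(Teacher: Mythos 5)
Your proposal is correct and follows exactly the intended derivation: the paper states this corollary without a separate proof precisely because it is the immediate combination of the component--cycle bijection for braid closures with the preceding lemma's factorization $\det(sI_n - p_n(\pi)) = \prod_P (s^{l(P)}-1)$, each factor contributing a simple root at $s=1$. Your extra care in checking that $s^{l(P)-1}+\cdots+1$ does not vanish at $s=1$ is a sensible (and correct) detail, and the whole argument matches the paper's approach.
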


\begin{example}
	Since a braid given in \cref{Lorenz-Lyndon-ababb} has the expression $\sigma = \sigma_2 \sigma_1 \sigma_3 \sigma_2 \sigma_4 \sigma_3 \in B_5$, 
	\begin{figure}[H]
	\centering
	\begin{tikzpicture}[line width=1pt, scale=0.3]
	\begin{scope}[shift={(0,0)}]

		\def\dotPt{4pt}

	% Base lines
		\draw (0,0) -- (10,0);
		\draw (0,4) -- (10,4);

	% Dots (lower and upper)
		\foreach \x in {1,3,5,7,9}{
		  \fill (\x,0) circle (\dotPt);
		  \fill (\x,4) circle (\dotPt);
		}

	% Lines
		\foreach \a/\b in {1/5, 3/7, 5/9}{
		  \draw (\a,0) -- (\b,4);
		}

	% Crossings (white overlay + actual line)
			\foreach \a/\b in {7/1, 9/3}{
		  \draw[white, line width=5pt] (\a-0.5,0+1/3) -- (\b+0.5,11/3);
		  \draw (\a,0) -- (\b,4);
		}

	% Labels (upper and lower)
		\foreach \i/\x in {1/1, 2/3, 3/5, 4/7, 5/9}{
		  \node at (\x,5) {$\i$};
		  \node at (\x,-1) {$\i$};
		}

		\node at (11.5,2) {$=$};
	\end{scope}

	\begin{scope}[shift={(13,-4)}]

	\def\dotPt{4pt}

	% Lines
	\draw (1,0) -- (1,8);
	\draw (1,10) -- (1,12);
	\draw (3,0) -- (3,4);
	\draw (3,6) -- (3,8);
	\draw (5,2) -- (5,4);
	\draw (5,8) -- (5,10);
	\draw (7,4) -- (7,6);
	\draw (7,8) -- (7,12);
	\draw (9,0) -- (9,2);
	\draw (9,4) -- (9,12);
	\draw (1,8) -- (5,12);
	\draw (3,4) -- (7,8);
	\draw (5,0) -- (9,4);
	
	\foreach \x/\y/\xx/\yy in {
	  1/10/3/8,
	  3/6/5/4,
	  3/12/5/10,
	  5/2/7/0,
	  5/8/7/6,
	  7/4/9/2
	}{
	  \draw[white, line width=4pt] (\x,\y) -- (\xx,\yy);
	  \draw (\x,\y) -- (\xx,\yy);
	}
	
	% Base lines
	\foreach \y in {0,2,...,12}{
	  \draw (0,\y) -- (10,\y);
	}
	
	% Dots
	\foreach \x in {1,3,5,7,9}{
	  \foreach \y in {0,2,...,12}{
	    \fill (\x,\y) circle (\dotPt);
	  }
	}
	
	% Labels (upper and lower)
	\foreach \i/\x in {1/1, 2/3, 3/5, 4/7, 5/9}{
	  \node at (\x,13) {$\i$};
	  \node at (\x,-1) {$\i$};
	}
	
	\node at (11,11) {$\sigma_2$};
	\node at (11,9) {$\sigma_1$};
	\node at (11,7) {$\sigma_3$};
	\node at (11,5) {$\sigma_2$};
	\node at (11,3) {$\sigma_4$};
	\node at (11,1) {$\sigma_3$};
	\end{scope}

	\end{tikzpicture}
%		\raisebox{10mm}{\includegraphics[width=60mm]{braidTVTVV}} 
%		\includegraphics[width=50mm]{braid-decomp} 
		\caption{The braid $\sigma = \sigma_2 \sigma_1 \sigma_3 \sigma_2 \sigma_4 \sigma_3 \in B_5$}
		\label{Trefoil-Lorenz-braid}
	\end{figure}
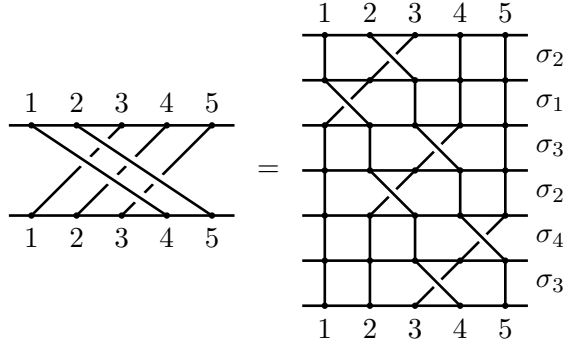
	\noindent the image under the Burau representation is given by
	\[
		\beta_5(\sigma) = \pmat{1-t & (1-t)t & (1-t)t^2 & t^3 & 0 \\ 1-t & (1-t)t & (1-t)t^2 & 0 & t^3 \\ 1 & 0 & 0 & 0 & 0 \\ 0 & 1 & 0 & 0 & 0 \\ 0 & 0 & 1 & 0 & 0}.
	\]
	Then
	\[
		\det(sI_5 - \beta_5(\sigma)) = (s-1)(s^4 + t^2 s^3 + t^3 s^2 + t^4 s + t^6),
	\]
	and hence
	\[
		\Delta_{\widehat{\sigma}}(t) = \frac{1}{[5]_t} \lim_{s \to 1} \frac{\det(sI_5 - \beta_5(\sigma))}{s-1} = t^2 - t + 1.
	\]
	At $t=1$, the characteristic polynomial of $p_n(\pi(\sigma))$ is $(s-1)(s^4+s^3+s^2+s+1) = s^5-1$. Since the eigenvalue $1$ has multiplicity $1$, the closure $\widehat{\sigma}$ has a single component. Indeed, it is the trefoil.
\end{example}

\begin{example}\label{ex:Alex-Lorenz}
	For the braid $\sigma = \mathbb{L}(2,4; 1,2; 3,1; 2,2)$ shown in \cref{Lorenz-braid-example}, 
	it can be expressed as
	\begin{align*}
		\sigma &= \sigma_9 \sigma_8 \sigma_{10} \sigma_7 \sigma_9 \sigma_{11} \sigma_6 \sigma_8 \sigma_{10} \sigma_{12} \sigma_5 \sigma_7 \sigma_9 \sigma_{11} \sigma_{13} \sigma_4 \sigma_6 \sigma_8 \sigma_{10} \sigma_{12} \sigma_{14} \\
		&\qquad \cdot \sigma_3 \sigma_5 \sigma_7 \sigma_{11} \sigma_{13} \sigma_{15} \sigma_2 \sigma_4 \sigma_{12} \sigma_{14} \sigma_{16} \sigma_1 \sigma_3 \sigma_{15} \sigma_2 \in B_{17}.
	\end{align*}
	Its image under the Burau representation is
	\begin{align*}
		&\beta_{17}(\sigma) \\
		&= \begin{tiny}
		\left(
\begin{array}{cc|cccc|c|cc|ccc|c|cc|cc}
 1-t & (1-t) t & t^2 & 0 & 0 & 0 & 0 & 0 & 0 & 0 & 0 & 0 & 0 & 0 & 0 & 0 & 0 \\
 1-t & (1-t) t & 0 & t^2 & 0 & 0 & 0 & 0 & 0 & 0 & 0 & 0 & 0 & 0 & 0 & 0 & 0 \\
 1-t & (1-t) t & 0 & 0 & t^2 & 0 & 0 & 0 & 0 & 0 & 0 & 0 & 0 & 0 & 0 & 0 & 0 \\
 1-t & (1-t) t & 0 & 0 & 0 & t^2 & 0 & 0 & 0 & 0 & 0 & 0 & 0 & 0 & 0 & 0 & 0 \\ \hline
 1-t & (1-t) t & 0 & 0 & 0 & 0 & (1-t) t^2 & t^3 & 0 & 0 & 0 & 0 & 0 & 0 & 0 & 0 & 0 \\
 1-t & (1-t) t & 0 & 0 & 0 & 0 & (1-t) t^2 & 0 & t^3 & 0 & 0 & 0 & 0 & 0 & 0 & 0 & 0 \\ \hline
 1-t & (1-t) t & 0 & 0 & 0 & 0 & (1-t) t^2 & 0 & 0 & (1-t) t^3 & (1-t) t^4 & (1-t) t^5 & t^6 & 0 & 0 & 0 & 0 \\ \hline
 1-t & (1-t) t & 0 & 0 & 0 & 0 & (1-t) t^2 & 0 & 0 & (1-t) t^3 & (1-t) t^4 & (1-t) t^5 & 0 & (1-t) t^6 & (1-t) t^7 & t^8 & 0 \\
 1-t & (1-t) t & 0 & 0 & 0 & 0 & (1-t) t^2 & 0 & 0 & (1-t) t^3 & (1-t) t^4 & (1-t) t^5 & 0 & (1-t) t^6 & (1-t) t^7 & 0 & t^8 \\ \hline
 1 & 0 & 0 & 0 & 0 & 0 & 0 & 0 & 0 & 0 & 0 & 0 & 0 & 0 & 0 & 0 & 0 \\
 0 & 1 & 0 & 0 & 0 & 0 & 0 & 0 & 0 & 0 & 0 & 0 & 0 & 0 & 0 & 0 & 0 \\ \hline
 0 & 0 & 0 & 0 & 0 & 0 & 1 & 0 & 0 & 0 & 0 & 0 & 0 & 0 & 0 & 0 & 0 \\ \hline
 0 & 0 & 0 & 0 & 0 & 0 & 0 & 0 & 0 & 1 & 0 & 0 & 0 & 0 & 0 & 0 & 0 \\
 0 & 0 & 0 & 0 & 0 & 0 & 0 & 0 & 0 & 0 & 1 & 0 & 0 & 0 & 0 & 0 & 0 \\
 0 & 0 & 0 & 0 & 0 & 0 & 0 & 0 & 0 & 0 & 0 & 1 & 0 & 0 & 0 & 0 & 0 \\ \hline
 0 & 0 & 0 & 0 & 0 & 0 & 0 & 0 & 0 & 0 & 0 & 0 & 0 & 1 & 0 & 0 & 0 \\
 0 & 0 & 0 & 0 & 0 & 0 & 0 & 0 & 0 & 0 & 0 & 0 & 0 & 0 & 1 & 0 & 0 \\
\end{array}
\right).
\end{tiny}
	\end{align*}
	Then we have
	\begin{align*}
		\det(sI_{17} - \beta_{17}(\sigma)) &= (s-1) (s^{16}+s^{15} t^3+s^{14} t^6+s^{13} t^8+s^{12} t^{10}+s^{11} t^{12}+s^{10} t^{14}+s^9 t^{15}+s^8 t^{18}\\
			&\qquad +s^7 t^{21}+s^6 t^{22}+s^5 t^{24}+s^4 t^{26}+s^3 t^{28}+s^2 t^{30}+s t^{33}+t^{36} ),
	\end{align*}
	which implies
	\[
		\lim_{t \to 1} \det(s I_{17} - \beta_{17}(\sigma)) = s^{17}-1,
	\]
	and 
	\begin{align*}
		\Delta_{\widehat{\sigma}}(t) &= t^{20}-t^{19}+t^{17}-t^{16}+t^{14}-t^{13}+t^{12}-t^{11}+t^{10}-t^9+t^8-t^7+t^6-t^4+t^3-t+1\\
			&= (t^2 - t + 1)(t^{18} - t^{16} + t^{12} - t^9 + t^6 - t^2 + 1).
	\end{align*}
\end{example}

% --------------------------------------------------------------------------
\subsection{Explicit formulas for the Alexander polynomials}\label{sec-Lorenz-braid}
% --------------------------------------------------------------------------

In this subsection, we consider general Lorenz braids. The Burau representation can be computed explicitly as follows, which allows the Alexander polynomials to be computed explicitly as well.

\begin{theorem}\label{thm:Burau-explicit}
	For a Lorenz braid $\sigma = \mathbb{L}(p_1, q_1; \dots; p_r, q_r)$, we have
	\[
		\beta_{P_r + Q_r}(\sigma) = 
		\left(\begin{array}{cc|cc|cc|cc}
		A_{q_1, p_1} & t^{P_1} I_{q_1} & 0 & 0 & &  & 0 & 0 \\
		A_{q_2, p_1} & 0 & t^{P_1}A_{q_2, p_2} & t^{P_2} I_{q_2} & & & \vdots & \vdots \\
		A_{q_3, p_1} & 0 & t^{P_1}A_{q_3, p_2} & 0 & \cdots & \cdots & \vdots & \vdots\\
		\vdots & \vdots & \vdots & \vdots & & & 0 & 0\\
		A_{q_r, p_1} & 0 & t^{P_1}A_{q_r, p_2} & 0 & & & t^{P_{r-1}} A_{q_r, p_r} & t^{P_r} I_{q_r} \\ \hline
		I_{p_1} & 0 & 0 & 0 & & & 0 & 0\\ 
		0 & 0 & I_{p_2} & 0 & & & 0 & 0\\
		\vdots & \vdots & \vdots & \vdots & \cdots & \cdots & \vdots & \vdots\\
		0 & 0 & 0 & 0& & & I_{p_r} & 0
		\end{array} \right),
	\]
	where $A_{q,p}$ is of size $q \times p$ defined by
	\begin{align*}
		A_{q, p} &= (1-t) \pmat{1 & t & \cdots & t^{p-1} \\ \vdots & \vdots & &\vdots \\ 1 & t & \cdots & t^{p-1}}.
	\end{align*}
\end{theorem}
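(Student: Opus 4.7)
The plan is to prove the formula by induction on the number $r$ of pairs, matching the block structure of the claimed matrix with a natural decomposition of the braid.

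For the base case $r=1$, the Lorenz braid $\mathbb{L}(p,q)$ admits the explicit positive braid word
\[
\mathbb{L}(p,q) \;=\; \prod_{j=1}^{p}\bigl(\sigma_{q+j-1}\sigma_{q+j-2}\cdots\sigma_{j}\bigr),
\]
obtained by bringing the $j$-th top-right strand, in the order $j=1,2,\dots,p$, from starting position $q+j$ down to target position $j$, each pass crossing all $q$ top-left strands exactly once. Inducting on $p$ and multiplying in one such factor at a time, one checks directly that
\[
\beta_{p+q}(\mathbb{L}(p,q)) \;=\; \begin{pmatrix} A_{q,p} & t^{p} I_{q} \\ I_{p} & 0 \end{pmatrix}.
\]

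For the inductive step, I would decompose $\sigma = \mathbb{L}(p_1,q_1;\dots;p_r,q_r)\in B_N$ (with $N=P_r+Q_r$) as $\sigma = \sigma_{(1)}\cdot\sigma_{(2)}$, where $\sigma_{(1)}$ is the one-pair Lorenz braid $\mathbb{L}(p_1,Q_r)$ acting on the first $p_1+Q_r$ strands (and trivially on the remaining $P_r-p_1$), while $\sigma_{(2)}$ is the $(r-1)$-pair Lorenz braid $\mathbb{L}(p_2,q_2;\dots;p_r,q_r)$ acting on the last $N-p_1-q_1$ strands (and trivially on the first $p_1+q_1$). Intuitively, $\sigma_{(1)}$ shuffles $R_1,\dots,R_{p_1}$ past all top-left strands to positions $1,\dots,p_1$, leaving $R_{p_1+1},\dots,R_{P_r}$ untouched; the subsequent $\sigma_{(2)}$ then performs the Lorenz shuffle on the $Q_r-q_1$ leftover top-left and $P_r-p_1$ leftover top-right strands. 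Tracing the strand positions at the interface between $\sigma_{(1)}$ and $\sigma_{(2)}$ confirms this equality of braids. By the base case, $\beta_N(\sigma_{(1)})$ is the $r=1$ formula padded by an identity block of size $P_r-p_1$; by the induction hypothesis, $\beta_N(\sigma_{(2)})$ is the theorem's formula for the $(r-1)$-pair braid, padded by an identity block of size $p_1+q_1$.

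The main obstacle is the block multiplication $\beta_N(\sigma)=\beta_N(\sigma_{(1)})\cdot\beta_N(\sigma_{(2)})$, which must reproduce the claimed super-block structure. The key maneuver is to refine both factors to a common interface partition with row sizes $(q_1,Q_r-q_1,p_1,P_r-p_1)$ and column sizes $(p_1,q_1,Q_r-q_1,P_r-p_1)$. This refinement is possible because all rows of $A_{Q_r,p_1}$ are equal, so it splits cleanly as $A_{q_1,p_1}$ stacked over $A_{Q_r-q_1,p_1}$, with the latter further breaking into sub-blocks $A_{q_j,p_1}$ for $j=2,\dots,r$. The critical check is that, for $2\le k\le j\le r$, the product block at row $q_j$, column $p_k$ receives its only nonzero contribution from the $t^{p_1}I_{Q_r-q_1}$ block of $\beta_N(\sigma_{(1)})$ times the $t^{P_{k-1}-p_1}A_{q_j,p_k}$ block of $\beta_N(\sigma_{(2)})$, yielding exactly $t^{P_{k-1}}A_{q_j,p_k}$ as required by the induction. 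The diagonal blocks $t^{P_j}I_{q_j}$ and $I_{p_j}$, and the vanishing of the strictly ``upper-triangular'' blocks, are then established by a similar routine case analysis on the sixteen pairs of super-block types.
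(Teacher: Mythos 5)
Your proof is correct, but it organizes the induction differently from the paper's. The paper inducts on $Q_r$, the total number of left strands: at each step it peels off the crossing-block of a single left strand over all $P_r$ right strands, factoring $\sigma$ as $\bigl(I_{Q_r-1}\oplus\mathbb{L}(P_r,1)\bigr)$ times a Lorenz braid with $q_r$ reduced by one, so its base case is only $\mathbb{L}(p,1)=\sigma_1\sigma_2\cdots\sigma_p$ and each inductive step multiplies by a very sparse matrix. You instead induct on $r$, peeling off $\mathbb{L}(p_1,Q_r)$ (the first $p_1$ right strands passing under \emph{all} left strands) and leaving $\mathbb{L}(p_2,q_2;\dots;p_r,q_r)$ on the remaining strands; this requires the full two-parameter base case $\mathbb{L}(p,q)$ up front (which you correctly reduce to an induction on $p$ via the word $\prod_{j=1}^{p}\sigma_{q+j-1}\cdots\sigma_j$) and one heavier block multiplication over the refined interface partition $(p_1,q_1,Q_r-q_1,P_r-p_1)$, whose key cross-term $t^{p_1}I\cdot t^{P_{k-1}-p_1}A_{q_j,p_k}=t^{P_{k-1}}A_{q_j,p_k}$ you identify correctly (the splitting of $A_{Q_r,p_1}$ is indeed harmless because all its rows coincide). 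Each route buys something: the paper's per-step verification is lighter, while yours needs fewer steps and makes the provenance of the powers $t^{P_{k-1}}$ transparent. The one point worth making explicit is why $\sigma=\sigma_{(1)}\sigma_{(2)}$ holds as an identity of \emph{braids} rather than merely of permutations: since a Lorenz braid is a positive permutation braid (each pair of strands crosses at most once, left over right), and your two factors have disjoint crossing sets whose union is that of $\sigma$, the product is again a positive permutation braid inducing the same permutation as $\sigma$, hence equal to it. The paper is no more explicit on the corresponding point in its own decomposition, so this is a presentational remark rather than a gap.
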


\begin{proof}
	We prove this by induction on $Q_r$. When $Q_r = 1$, we have $r=1$. In this case, since $\mathbb{L}(p,1) = \sigma_1 \sigma_2 \cdots \sigma_p \in B_{p+1}$, we see that
	\begin{align*}
		\beta_{p+1}(\mathbb{L}(p,1)) &= \pmat{1-t & t &  \\ 1 & 0 & \\ & & I_{p-1}} \pmat{1 & & & \\ & 1-t & t & \\ & 1 & 0 & \\ & & & I_{p-2}} \cdots \pmat{I_{p-1} & & \\ & 1-t & t \\ & 1 & 0}
%		&= \pmat{1-t & \cdots & (1-t)t^{p-1} & t^p \\ 1 & 0 & \cdots & 0 \\  & \ddots & \ddots & \vdots \\ & & 1 & 0}\\
		= \pmat{A_{1,p} & t^p \\ I_p & 0}.
	\end{align*}
	This can be verified, for example, by induction on $p$. 
	
	For $Q_r > 1$, by separating the crossings involving the $Q_r$-th strand from the remaining part (see the figure below), we can express
	\begin{align*}
		&\beta_{P_r+Q_r}(\mathbb{L}(p_1, q_1; \dots; p_r, q_r))\\
		&= \begin{cases}
			I_{Q_r-1} \oplus \beta_{P_r+1}(\mathbb{L}(P_r, 1)) \cdot \beta_{P_r+Q_r-1}(\mathbb{L}(p_1, q_1; \dots; p_r, q_r-1)) \oplus I_1 &\text{if } q_r > 1,\\
			I_{Q_r-1} \oplus \beta_{P_r+1}(\mathbb{L}(P_r, 1)) \cdot \beta_{P_{r-1}+Q_{r-1}}(\mathbb{L}(p_1, q_1; \dots; p_{r-1}, q_{r-1})) \oplus I_{p_r+1} &\text{if } q_r = 1.
		\end{cases}
	\end{align*}
	
	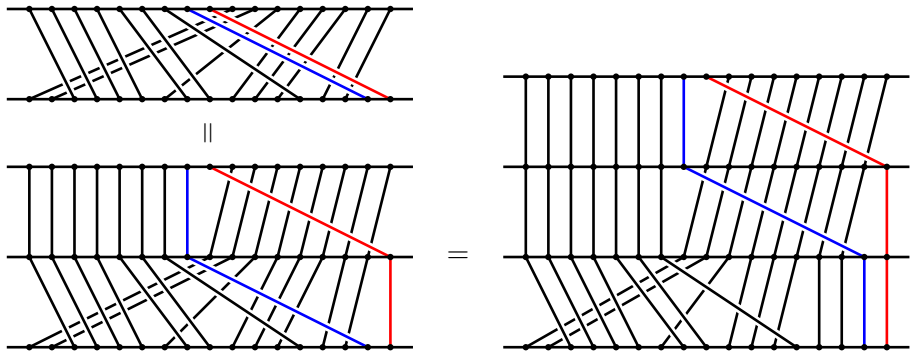
\begin{figure}[H]
	\centering
	\begin{tikzpicture}[line width=1pt, scale=0.3]
	\begin{scope}[shift={(0,0)}]
	\def\dotPt{4pt}
	
	% Lines
	\foreach \x/\y in {
	  1/10, 2/11, 7/12, 10/13, 11/14, 12/15, 14/16, 15/17
	}{
	  \draw (\x,0) -- (\y,4);
	}
	
	\foreach \x/\y in {
	  1/3, 2/4, 3/5, 4/6, 5/8, 6/9, 7/13
	}{
	  \draw[white, line width=3pt] (\x,4) -- (\y,0);
	  \draw (\x,4) -- (\y,0);
	}
	
	 \draw[white, line width=3pt] (8,4) -- (16,0);
	 \draw[blue] (8,4) -- (16,0);
	 \draw[white, line width=3pt] (9,4) -- (17,0);
	 \draw[red] (9,4) -- (17,0);
	
	% Base lines
	\foreach \y in {0,4}{
	  \draw (0,\y) -- (18,\y);
	}
	
	% Dots
	\foreach \x in {1,...,17}{
	  \foreach \y in {0,4}{
	    \fill (\x,\y) circle (\dotPt);
	  }
	}
	
	\node at (9,-1.5) {\rotatebox{90}{$=$}};
	\end{scope}
	
	\begin{scope}[shift={(0,-11)}]
	
	\def\dotPt{4pt}
	
	% Lines
	\foreach \x/\y in {
	  1/9, 2/10, 7/11, 10/12, 11/13, 12/14, 14/15, 15/16
	}{
	  \draw (\x,0) -- (\y,4);
	}
	\draw[red] (17,0) -- (17,4);
	
	\foreach \x/\y in {
	  1/3, 2/4, 3/5, 4/6, 5/8, 6/9, 7/13
	}{
	  \draw[white, line width=3pt] (\x,4) -- (\y,0);
	  \draw (\x,4) -- (\y,0);
	}
	\draw[white, line width=3pt] (8,4) -- (16,0);
	\draw[blue] (8,4) -- (16,0);
	
	\foreach \x/\y in {
	  1/1, 2/2, 3/3, 4/4, 5/5, 6/6, 7/7, 10/9, 11/10, 12/11, 13/12, 14/13, 15/14, 16/15, 17/16
	}{
	  \draw (\x,8) -- (\y,4);
	}
	\draw[blue] (8,8) -- (8,4);

	\foreach \x/\y in {
	  9/17
	}{
	  \draw[white, line width=3pt] (\x,8) -- (\y,4);
	  \draw[red] (\x,8) -- (\y,4);
	}

	% Base lines
	\foreach \y in {0,4,8}{
	  \draw (0,\y) -- (18,\y);
	}
	
	% Dots
	\foreach \x in {1,...,17}{
	  \foreach \y in {0,4,8}{
	    \fill (\x,\y) circle (\dotPt);
	  }
	}
	\node at (20,4) {$=$};
	\end{scope}
	
	\begin{scope}[shift={(22,-7)}]
	
	\def\dotPt{4pt}
	
	% Lines
	\foreach \x/\y in {
	  1/8, 2/9, 7/10, 10/11, 11/12, 12/13, 14/14, 15/15
	}{
	  \draw (\x,-4) -- (\y,0);
	}
	\draw[red] (17,-4) -- (17,0);
	\draw[blue] (16,-4) -- (16,0);
	
	\foreach \x/\y in {
	  1/3, 2/4, 3/5, 4/6, 5/8, 6/9, 7/13
	}{
	  \draw[white, line width=3pt] (\x,0) -- (\y,-4);
	  \draw (\x,0) -- (\y,-4);
	}
	
	\foreach \x/\y in {
	  1/1, 2/2, 3/3, 4/4, 5/5, 6/6, 7/7, 9/8, 10/9, 11/10, 12/11, 13/12, 14/13, 15/14, 16/15
	}{
	  \draw (\x,4) -- (\y,0);
	}
	\draw[red] (17,4) -- (17,0);
	
	\foreach \x/\y in {
	  8/16
	}{
	  \draw[white, line width=3pt] (\x,4) -- (\y,0);
	  \draw[blue] (\x,4) -- (\y,0);
	}
	
	\foreach \x/\y in {
	  1/1, 2/2, 3/3, 4/4, 5/5, 6/6, 7/7, 10/9, 11/10, 12/11, 13/12, 14/13, 15/14, 16/15, 17/16
	}{
	  \draw (\x,8) -- (\y,4);
	}
	\draw[blue] (8,8) -- (8,4);
	
	\foreach \x/\y in {
	  9/17
	}{
	  \draw[white, line width=3pt] (\x,8) -- (\y,4);
	  \draw[red] (\x,8) -- (\y,4);
	}
	
	% Base lines
	\foreach \y in {-4, 0,4,8}{
	  \draw (0,\y) -- (18,\y);
	}
	
	% Dots
	\foreach \x in {1,...,17}{
	  \foreach \y in {-4,0,4,8}{
	    \fill (\x,\y) circle (\dotPt);
	  }
	}
	\end{scope}

	\end{tikzpicture}
		\caption{Decompositions of a Lorenz braid.}
		\label{}
	\end{figure}
	By the induction hypothesis, in both cases, this can be verified to coincide with the right-hand side of the theorem.
\end{proof}

Together with \cref{cor:component-number}, we obtain the next result.

\begin{corollary}\label{cor:perm-mat}
	Under the above notation, the number of components of the Lorenz link $\widehat{\mathbb{L}}(p_1, q_1; \dots; p_r, q_r)$ is equal to the multiplicity of the eigenvalue $1$ of the matrix 
	\[
		\lim_{t \to 1} \beta_{P_r + Q_r}(\sigma) = 
		\left(\begin{array}{cc|cc|cc|cc}
		 & I_{q_1} & & & &  & &  \\
		 &  & & I_{q_2} & & & & \\
		 &  &  &  & \cdots & \cdots &  & \\
		 &  & &  & & & & I_{q_r} \\ \hline
		I_{p_1} &  &  &  & & &  & \\ 
		& & I_{p_2} &  & & &  & \\
		&  & & & \cdots & \cdots & & \\
		&  &  & & & & I_{p_r} & 
		\end{array} \right).
	\]
\end{corollary}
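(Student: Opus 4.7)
The plan is to combine \cref{thm:Burau-explicit} with \cref{cor:component-number}. \cref{cor:component-number} tells us that the number of components of the closure of any braid $\sigma \in B_n$ equals the multiplicity of the eigenvalue $1$ of the specialization $\beta_n(\sigma)|_{t=1} = p_n(\pi(\sigma))$. Therefore the substance of the corollary is simply to carry out the $t \to 1$ specialization in the block matrix produced by \cref{thm:Burau-explicit} and identify the result with the permutation matrix displayed in the statement.

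First I would observe that each off-diagonal block appearing in the formula of \cref{thm:Burau-explicit} has the form $t^{P_{j-1}} A_{q_i, p_j}$ where the matrix
\[
A_{q,p} = (1-t) \pmat{1 & t & \cdots & t^{p-1} \\ \vdots & \vdots & & \vdots \\ 1 & t & \cdots & t^{p-1}}
\]
carries an explicit factor of $(1-t)$. Hence $\lim_{t \to 1} A_{q,p} = 0$, and all such blocks vanish in the limit. The remaining nonzero blocks are the identity blocks $t^{P_j} I_{q_j}$ along the ``upper'' band and $I_{p_j}$ along the ``lower'' band, each of which tends to $I_{q_j}$ or $I_{p_j}$ respectively since $t^{P_j} \to 1$. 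Matching block sizes and positions, one verifies that the resulting matrix is precisely the block permutation matrix in the statement of the corollary, which represents the underlying permutation $\pi(\sigma) \in \mathfrak{S}_{P_r+Q_r}$ sending each $p$-part (resp.\ $q$-part) of the bottom row to its corresponding strand on the top row.

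Applying \cref{cor:component-number} to this specialization then immediately yields the claim. There is no serious obstacle here; the only thing to double-check is the bookkeeping of block indices when matching the formula of \cref{thm:Burau-explicit} with the permutation matrix stated in the corollary, but this is a direct verification once one notes that the $(1-t)$ factor kills every non-identity block.
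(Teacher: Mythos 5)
Your proposal is correct and matches the paper's argument exactly: the paper derives this corollary by combining \cref{thm:Burau-explicit} with \cref{cor:component-number}, with the $t \to 1$ specialization killing the $(1-t)$-factored blocks $A_{q,p}$ and reducing the identity blocks to the displayed permutation matrix. Your bookkeeping of the blocks is the only content needed, and you have it right.
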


Moreover, from the explicit formula, the degree of the Alexander polynomial can also be determined as follows.

\begin{corollary}\label{cor:deg-Alexander}
	Let $\sigma = \mathbb{L}(p_1, q_1; \dots; p_r, q_r)$ be a Lorenz braid whose closure is a knot. Then, the Alexander polynomial $\Delta_{\widehat{\sigma}}(t)$ belongs to $\Z[t]$ and is a monic reciprocal polynomial of degree
	\[
		\deg \Delta_{\widehat{\sigma}} = \sum_{j=1}^r P_j q_j - (P_r + Q_r) + 1.
	\]
\end{corollary}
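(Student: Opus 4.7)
The strategy is to read the statement directly off \cref{thm:Burau-explicit} combined with \cref{lem:sym-Alexander}.

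First, I identify the exponent sum $|\sigma|$. By \cref{lem:sym-Alexander} we have $\det(\beta_n(\sigma)) = (-t)^{|\sigma|}$. Using the block form in \cref{thm:Burau-explicit}, the bottom $P_r$ ``$p$-rows'' of $\beta_n(\sigma)$ each contain a single $1$ sitting inside an $I_{p_j}$ block, so Laplace expansion along these $P_r$ rows forces the associated column indices to be exactly the $p$-sub-block positions. The remaining $Q_r \times Q_r$ minor, on the $q$-rows and the $q$-sub-block columns, is block diagonal with $j$th diagonal block $t^{P_j} I_{q_j}$, because every $t^{P_{k-1}} A_{q_j, p_k}$ block sits entirely in $p$-sub-block columns and is thus deleted. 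Its determinant is $\prod_{j=1}^r t^{P_j q_j}$, so comparing with $(-t)^{|\sigma|}$ gives $|\sigma| = \sum_{j=1}^r P_j q_j$. (This also matches the direct geometric count of positive crossings in the Lorenz braid: each of the $q_j$ strands terminating in the $q_j$-part crosses exactly $P_j$ of the top-right strands.)

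Second, \cref{lem:sym-Alexander} then yields the reciprocal symmetry
\[
	\Delta_{\widehat{\sigma}}(1/t) = (-t)^{\,n-1-|\sigma|}\,\Delta_{\widehat{\sigma}}(t).
\]
Since $\widehat{\sigma}$ is a knot, $\pi(\sigma) \in \mathfrak{S}_n$ is a single $n$-cycle of sign $(-1)^{n-1}$; on the other hand $\pi(\sigma)$ is the product of $|\sigma|$ transpositions, so $(-1)^{|\sigma|} = (-1)^{n-1}$. Hence $|\sigma| - n + 1$ is even and the sign in the symmetry is $+1$. Consequently, once $\Delta_{\widehat{\sigma}}$ is known to lie in $\Z[t]$, it is a genuinely palindromic polynomial: if its nonzero terms range from $t^m$ to $t^M$, then $m + M = |\sigma| - n + 1$ and $a_k = a_{m+M-k}$ for all $k$.

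Third, it remains to show $\Delta_{\widehat{\sigma}}(t) \in \Z[t]$ is monic, so that $m = 0$, $M = |\sigma| - n + 1$, and the leading coefficient equals $1$. The cleanest argument invokes two classical results: Stallings' theorem that the closure of any positive braid is a fibered knot, together with the fact that the Alexander polynomial of a fibered knot is monic of degree $2g$; and Bennequin's formula (from Seifert's algorithm on the canonical positive-braid Seifert surface) giving $2g = c - n + 1$ for a positive braid on $n$ strands with $c$ crossings whose closure is a knot. Applied here with $c = |\sigma|$, this yields $\deg \Delta_{\widehat{\sigma}} = \sum_{j=1}^r P_j q_j - (P_r + Q_r) + 1$, and reciprocality from Step 2 forces the constant term to also equal $1$. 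The main obstacle, if one prefers a self-contained proof from \cref{thm:Burau-explicit} alone, is this last step: one must verify directly that the unique permutation realizing the top $t$-degree $|\sigma|$ in the expansion of $\det(sI_n - \beta_n(\sigma))$, namely the diagonal-choice permutation identified in Step 1, contributes $\pm t^{|\sigma|}$ in precisely such a way that after dividing by $(s-1)[n]_t$ and setting $s=1$, the resulting leading coefficient of $\Delta_{\widehat{\sigma}}$ is $+1$. The sign and normalization bookkeeping through the $(s-1)[n]_t$ normalization is the delicate part.
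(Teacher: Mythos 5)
Your Steps 1 and 2 match the paper: the exponent sum $|\sigma|=\sum_{j=1}^r P_jq_j$ (the paper just counts crossings geometrically, you also read it off $\det\beta_n(\sigma)$ from \cref{thm:Burau-explicit} -- both are fine), and the parity argument via \cref{lem:sym-Alexander} showing the reciprocal symmetry has sign $+1$. Where you diverge is Step 3. The paper does \emph{not} invoke Stallings or a genus formula in its proof; it observes that, by the explicit block form of \cref{thm:Burau-explicit}, the specialization of $sI_{P_r+Q_r}-\beta_{P_r+Q_r}(\sigma)$ at $t=0$ is lower triangular with diagonal $(s-1,s,\dots,s)$, which immediately gives $\Delta_{\widehat{\sigma}}\in\Z[t]$ with $\Delta_{\widehat{\sigma}}(0)=1$; combined with the reciprocity of Step 2 this forces monicity and the degree formula in one stroke. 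Your route via ``positive braid $\Rightarrow$ fibered, Alexander polynomial monic of degree $2g=c-n+1$'' is exactly the alternative the authors mention in the remark \emph{after} the proof of \cref{cor:deg-Alexander}, so it is a legitimate path; its cost is that the classical statements only control the polynomial up to units $\pm t^k$. Your span-plus-reciprocity argument does correctly pin down the lowest exponent $m=0$, but it only yields that the constant and leading coefficients are equal, not that they equal $+1$ rather than $-1$ in the paper's specific Burau normalization. You flag this sign/normalization issue as the delicate point of a ``self-contained'' proof, but it is equally unresolved in your classical route as written; the one-line $t=0$ lower-triangularity observation is precisely the missing ingredient that settles it, and it is cheaper than anything you invoke. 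I would call this a genuinely different (and slightly heavier) argument with one small normalization gap that the paper's own proof closes elementarily.
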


\begin{proof}
	The absence of $t^{-1}$-terms in $\Delta_{\widehat{\sigma}}(t)$, as well as the fact that its constant term is $1$, follow from the observation that the matrix $s I_{P_r+Q_r} - \beta_{P_r + Q_r}(\sigma)$ at $t=0$ is lower triangular with diagonal entries $s-1, s, \dots, s$. 
	
	By \cref{lem:sym-Alexander}, we have
	\[
		\Delta_{\widehat{\sigma}}(1/t) = (-t)^{-|\sigma| + n -1} \Delta_{\widehat{\sigma}}(t),
	\]
	where $n = P_r + Q_r$. Since $\sigma$ is a positive braid, $|\sigma|$ equals the number of crossings of the braid, namely, $\sum_{j=1}^r P_j q_j$. As $\widehat{\sigma}$ is a knot, the permutation $\pi(\sigma)$ is cyclic. Both $(-1)^{|\sigma|}$ and $(-1)^{n-1}$ give the sign of $\pi(\sigma) \in \mathfrak{S}_n$, hence $|\sigma| - n + 1$ is even. Therefore, we see that $\Delta_{\widehat{\sigma}}(t)$ is a monic reciprocal polynomial of degree $|\sigma| - n + 1$.
\end{proof}

In fact, the above corollary can also be derived from the observation that a Lorenz braid is a positive braid, and the closure of any positive braid yields a fibered knot. For a fibered knot, the degree of the Alexander polynomial equals twice the \emph{genus} of its Seifert surface, and this genus can be computed explicitly from the number of strands and crossings of the corresponding positive braid. See Stallings~\cite{Stallings1978}.

In addition, since Lorenz knots are positive braid knots, the degree of the Alexander polynomial equals not only twice the genus but also twice the \emph{unknotting number} of the knot. See \cite{Rudolph1983, KLMMMS2024}. Hence, it is interesting to ask whether this degree can be expressed in terms of the coefficients of the continued fraction expansion of the corresponding real quadratic irrational for a modular knot.

\begin{remark}
	For $T$-braids $\mathbb{T}((r_1, s_1), (r_2, s_2))$, the third author and Adnan~\cite{ParkAdnan2025} computed its Alexander polynomial under certain conditions, and Adnan, Paiva, and the third author~\cite{AdnanPaivaPark2025} studied the number of components of $T$-link. This corresponds to the case $r=2$ in Lorenz braids. Although their analysis focuses on the double twist case, they also considered the general situation with $s_2 < 0$.
\end{remark}

% --------------------------------------------------------------------------
\section{Which polynomials arise as Alexander polynomials of modular knots}\label{sec:Alexander-finite}
% --------------------------------------------------------------------------

In this section, we consider the following subset of polynomials:
\[
	A_n \coloneqq \{\Delta_{K}(t) : \deg \Delta_K = n, K: \text{modualr knot} \} \subset \Z[t].
\]
As is well known, the Alexander polynomial is a reciprocal (Laurent) polynomial that takes the value $1$ at $t=1$, so it immediately follows that $A_n = \emptyset$ for every odd $n$.

As shown in \cref{cor:deg-Alexander}, if a modular braid $\mathbb{M}$ corresponds to a Lorenz braid $\mathbb{L}(p_1, q_1; \dots; p_r, q_r)$, then the degree of its Alexander polynomial is given by
\begin{align*}
	\deg \Delta_{\widehat{\mathbb{M}}} &= \sum_{j=1}^{r-1} (P_j - 1)q_j + (P_r-1)(q_r-1).
\end{align*}
By \cref{lem:Markov}, Markov moves allow us to assume $p_1 > 1$ and $q_r > 1$ without affecting the Alexander polynomial. Therefore, the order of $A_n$ can be bounded above by
\begin{align}\label{eq:count}
	&\#\left\{(p_1, q_1; \dots; p_r, q_r) \in \Z_{\ge 1}^{2r} : r \ge 1, p_1, q_r > 1, \sum_{j=1}^{r-1} (P_j-1)q_j + (P_r-1)(q_r-1) = n \right\}\\
	&= \sum_{r=1}^\infty \sum_{\substack{0 < m_1 < m_2 < \cdots < m_r \\ n = m_1 d_1 + m_2 d_2 + \cdots + m_r d_r}} 1. \nonumber
\end{align}
This number coincides with the number of partitions of $n$, denoted by $p(n)$, and thus we obtain the following trivial upper bound, (see~\cite{AndrewsEriksson2004} on partition theory).

\begin{theorem}\label{thm:fin-Alexander}
	For a given even degree $n$, we have $\# A_n \le p(n)$. In particular, $A_n$ is a finite set.
\end{theorem}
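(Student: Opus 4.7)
The plan is to combine the explicit degree formula from \cref{cor:deg-Alexander} with a Markov-normalization from \cref{lem:Markov}, and then to count using the theory of integer partitions. The vanishing $A_n = \emptyset$ for odd $n$ is immediate from the proof of \cref{cor:deg-Alexander}: the quantity $|\sigma| - n_\sigma + 1$ is always even, because both $(-1)^{|\sigma|}$ and $(-1)^{n_\sigma - 1}$ equal the sign of the cyclic permutation $\pi(\sigma) \in \mathfrak{S}_{n_\sigma}$. Hence the Alexander polynomial of every modular knot has even degree.

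For even $n \ge 2$, I would first rewrite the degree formula of \cref{cor:deg-Alexander} in the more convenient form
\begin{align*}
	\deg \Delta_{\widehat{\sigma}} = \sum_{j=1}^{r-1}(P_j - 1) q_j + (P_r - 1)(q_r - 1),
\end{align*}
which follows by a direct algebraic rearrangement of $\sum_j P_j q_j - P_r - Q_r + 1$. Next I would invoke \cref{lem:Markov} to argue that every modular knot of degree $n \ge 2$ admits a Lorenz presentation with $p_1 \ge 2$ and $q_r \ge 2$: whenever $p_1 = 1$ (respectively $q_r = 1$), the single leftmost strand in the first $p$-part (respectively rightmost strand in the last $q$-part) can be Markov-destabilized, producing a Lorenz braid with one fewer strand but the same closure. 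Since the degree of the Alexander polynomial is a knot invariant, this process cannot terminate at the trivial case $\mathbb{L}(1, q_1)$ (whose closure is unknotted, of degree $0$), so it terminates at a normalized tuple still representing the same modular knot.

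The remaining task is to count normalized tuples $(p_1, q_1; \ldots; p_r, q_r)$ satisfying $p_1, q_r \ge 2$, $p_j, q_j \ge 1$ otherwise, and the degree equation. Set $m_j := P_j - 1$ for all $j$, and $d_j := q_j$ for $j < r$ together with $d_r := q_r - 1$. The conditions $p_1 \ge 2$ and $p_{j+1} \ge 1$ translate exactly to $0 < m_1 < m_2 < \cdots < m_r$, while $q_j \ge 1$ for $j < r$ and $q_r \ge 2$ give $d_j \ge 1$ for all $j$. The degree equation becomes
\begin{align*}
	n = m_1 d_1 + m_2 d_2 + \cdots + m_r d_r,
\end{align*}
which is precisely the encoding of a partition of $n$ whose distinct parts are $m_1 < \cdots < m_r$ occurring with multiplicities $d_1, \ldots, d_r$. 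The correspondence $(p_j, q_j) \leftrightarrow (m_j, d_j)$ is clearly invertible, so the set of normalized tuples is in bijection with the partitions of $n$, giving exactly $p(n)$. Combined with the normalization step, this yields $\#A_n \le p(n)$.

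The main delicate point is the Markov-normalization: one must check that a Lorenz braid with $p_1 = 1$ or $q_r = 1$ genuinely admits the required destabilization and that the destabilized braid is again in Lorenz form with the expected reduction of parameters. This can be handled by a careful application of \cref{lem:Markov} (reading the relevant Markov move on words in reverse) and by an examination of the strand corresponding to the isolated extremal block; after this, the remainder of the proof is routine bookkeeping.
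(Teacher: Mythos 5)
Your proposal is correct and follows essentially the same route as the paper: rewrite the degree formula of \cref{cor:deg-Alexander} as $\sum_{j=1}^{r-1}(P_j-1)q_j + (P_r-1)(q_r-1)$, normalize to $p_1, q_r \ge 2$ via the Markov moves of \cref{lem:Markov}, and put the normalized tuples in bijection with partitions of $n$ through the substitution $m_j = P_j - 1$, $d_j = q_j$ (for $j < r$), $d_r = q_r - 1$. The only cosmetic difference is that the paper dispenses with odd $n$ by the general reciprocity of the Alexander polynomial together with $\Delta_K(1)=1$, whereas you use the parity of $|\sigma| - n + 1$ from the proof of \cref{cor:deg-Alexander}; both are fine.
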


As the examples below illustrate, the order of $A_n$ is much smaller than $p(n)$, since the closure of a corresponding Lorenz braid $\mathbb{L}(p_1, q_1; \dots; p_r, q_r)$ may fail to be a knot, and different tuples may define the same knot or the same Alexander polynomial.

\begin{example}
	For $n=2$, the only solutions $(p_1, q_1; \dots; p_r, q_r) \in \Z_{\ge 1}^{2r}$ with $p_1, q_r > 1$ to
	\[
		\sum_{j=1}^{r-1} (P_j-1)q_j + (P_r-1)(q_r-1) = 2
	\]
	are $(p_1, q_1) = (2,3)$ and $(3,2)$. The closures of the modular (Lorenz) braids $\mathbb{L}(2,3)$ and $\mathbb{L}(3,2)$ both yield knots, and in both cases the Alexander polynomial is $t^2 - t + 1$. Therefore, $A_2 = \{t^2 - t + 1\}$ and $\#A_2 = 1 < p(2) = 2$. 
	\begin{figure}[H]
	\centering
	\begin{tikzpicture}[line width=1pt, scale=0.4]
	\begin{scope}[shift={(0,0)}]
	% lower and upper lines
		\draw (0,0) -- (10,0);
		\draw (0,6) -- (10,6);
		
	% dots
		\def \dotPt {4pt}; 
		\foreach \x in {1,3,5,7,9}{\fill (\x,0) circle (\dotPt);}
		\foreach \x in {1,3,5,7,9}{\fill (\x,6) circle (\dotPt);}
	
	% lines
		\foreach \a/\b in {1/7, 3/9}{\draw (\a,0) -- (\b,6);}
		
		\foreach \a/\b in {5/1, 7/3, 9/5}{
		  \draw[white, line width=4pt] (\a-1/4,3/8) -- (\b+1/4,45/8);
		  \draw (\a,0) -- (\b,6);
		}
		
	% texts
		\foreach \x in {1,2,3,4,5}{\node at (2*\x-1,6.5) {\tiny $\x$};}
		\foreach \x in {1,2,3,4,5}{\node at (2*\x-1,-0.5) {\tiny $\x$};}
		
		\draw[red] (21/5,6/5) circle[radius=1/2];
		\draw[red] (27/5,12/5) circle[radius=1/2];
	
		\node at (5,-2) {$LLRLR$};	
		\node at (5,-3.3) {$3x^2 - 6xy - 5y^2$};	
	\end{scope}
	
	\begin{scope}[shift={(14,0)}]
	% lower and upper lines
		\draw (0,0) -- (10,0);
		\draw (0,6) -- (10,6);
		
	% dots
		\def \dotPt {4pt}; 
		\foreach \x in {1,3,5,7,9}{\fill (\x,0) circle (\dotPt);}
		\foreach \x in {1,3,5,7,9}{\fill (\x,6) circle (\dotPt);}
	
	% lines
		\foreach \a/\b in {1/5, 3/7, 5/9}{\draw (\a,0) -- (\b,6);}
		
		\foreach \a/\b in {7/1, 9/3}{
		  \draw[white, line width=4pt] (\a-1/4,1/4) -- (\b+1/4,23/4);
		  \draw (\a,0) -- (\b,6);
		}
		
	% texts
		\foreach \x in {1,2,3,4,5}{\node at (2*\x-1,6.5) {\tiny $\x$};}
		\foreach \x in {1,2,3,4,5}{\node at (2*\x-1,-0.5) {\tiny $\x$};}
		
		\draw[red] (23/5,12/5) circle[radius=1/2];
		\draw[red] (29/5,6/5) circle[radius=1/2];
	
		\node at (5,-2) {$LRLRR$};
		\node at (5,-3.3) {$5x^2 - 6xy - 3y^2$};	
	\end{scope}
	
	\end{tikzpicture}
		\caption{Two braids corresponding to the partitions $1+1$ and $2$ of $2$.}
		\label{n=2}
	\end{figure}
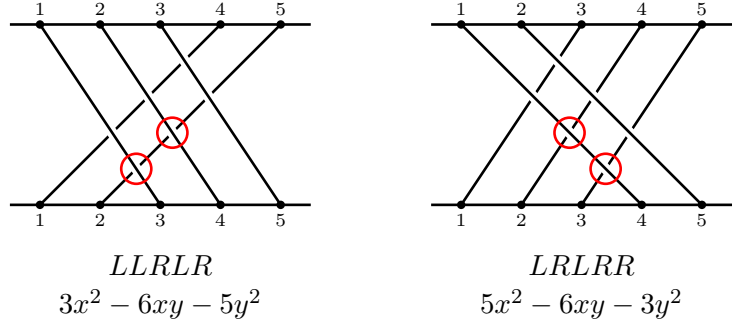
\end{example}

For example, Ghys--Leys~\cite{GhysLeys} noted that the figure-eight knot cannot be realized as a modular (Lorenz) knot, and this follows from the fact that its Alexander polynomial is $-t + 3 - t^{-1}$ (up to a unit in $\Z[t,t^{-1}]$). See also the list by Birman--Kofman~\cite[Table 1]{BirmanKofman2009} of hyperbolic knots that can be realized as Lorenz knots.

\begin{example}
	For $n=4$, the tuples $(p_1, q_1; \dots)$ satisfying the above conditions are $(2,5)$, $(3,3)$, $(5,2)$, $(2,1; 2,2)$, and $(2,2; 1,2)$. Among these, the case $(3,3)$ does not define a knot, while for the other four cases the Alexander polynomial is $t^4 - t^3 + t^2 - t + 1$. More precisely, we see that all of these closures determine the same knot. Therefore,
	\[
		A_4 = \{t^4 - t^3 + t^2 - t + 1\}.
	\]
	\begin{figure}[H]
	\centering
	\begin{tikzpicture}[line width=1pt, scale=0.5]
	\begin{scope}[shift={(0,0)}]
	\def\dotPt{4pt}
	
	% Lines
	\foreach \x/\y in {
	  1/3, 2/4, 3/5, 4/6, 5/7
	}{
	  \draw (\x,0) -- (\y,4);
	}
	
	\foreach \x/\y in {
	  1/6, 2/7
	}{
	  \draw[white, line width=3pt] (\x,4) -- (\y,0);
	  \draw (\x,4) -- (\y,0);
	}
	
	% Base lines
	\foreach \y in {0,4}{
	  \draw (0,\y) -- (8,\y);
	}
	
	% Dots
	\foreach \x in {1,...,7}{
	  \foreach \y in {0,4}{
	    \fill (\x,\y) circle (\dotPt);
	  }
	}
	
	% Labels (upper and lower)
	\foreach \i in {1,...,7}{
	  \node at (\i,4.5) {\tiny $\i$};
	  \node at (\i,-0.5) {\tiny $\i$};
	}
	\draw[red] (22/7,16/7) circle[radius=1/3];
	\draw[red] (27/7,12/7) circle[radius=1/3];
	\draw[red] (32/7,8/7) circle[radius=1/3];
	\draw[red] (37/7,4/7) circle[radius=1/3];
	
	\node at (4,-1.5) {$LRRLRRR$};	
	\node at (4,-2.5) {$11x^2 - 12xy - 4y^2$};	
	
	\end{scope}
	
	\begin{scope}[shift={(11,0)}]
	\def\dotPt{4pt}
	
	% Lines
	\foreach \x/\y in {
	  1/4, 2/5, 4/6, 5/7
	}{
	  \draw (\x,0) -- (\y,4);
	}
	
	\foreach \x/\y in {
	  1/3, 2/6, 3/7
	}{
	  \draw[white, line width=3pt] (\x,4) -- (\y,0);
	  \draw (\x,4) -- (\y,0);
	}
	
	% Base lines
	\foreach \y in {0,4}{
	  \draw (0,\y) -- (8,\y);
	}
	
	% Dots
	\foreach \x in {1,...,7}{
	  \foreach \y in {0,4}{
	    \fill (\x,\y) circle (\dotPt);
	  }
	}
	
	% Labels (upper and lower)
	\foreach \i in {1,...,7}{
	  \node at (\i,4.5) {\tiny $\i$};
	  \node at (\i,-0.5) {\tiny $\i$};
	}
	
	\draw[red] (13/5,4/5) circle[radius=1/3];
	\draw[red] (26/7,16/7) circle[radius=1/3];
	\draw[red] (14/3,4/3) circle[radius=1/3];
	\draw[red] (16/3,2/3) circle[radius=1/3];
	
	\node at (4,-1.5) {$LLRRLRR$};	
	\node at (4,-2.5) {$8x^2 - 16xy - 7y^2$};
	
	\end{scope}
	\end{tikzpicture}
	\end{figure}
	
	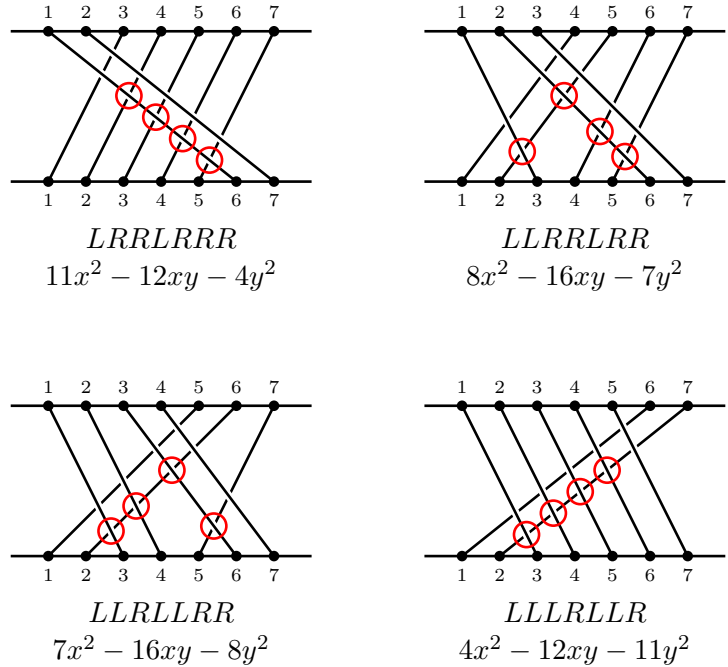
\begin{figure}[H]
	\centering
	\begin{tikzpicture}[line width=1pt, scale=0.5]
	\begin{scope}[shift={(0,0)}]
	\def\dotPt{4pt}
	
	% Lines
	\foreach \x/\y in {
	  1/5, 2/6, 5/7
	}{
	  \draw (\x,0) -- (\y,4);
	}
	
	\foreach \x/\y in {
	  1/3, 2/4, 3/6, 4/7
	}{
	  \draw[white, line width=3pt] (\x,4) -- (\y,0);
	  \draw (\x,4) -- (\y,0);
	}
	
	% Base lines
	\foreach \y in {0,4}{
	  \draw (0,\y) -- (8,\y);
	}
	
	% Dots
	\foreach \x in {1,...,7}{
	  \foreach \y in {0,4}{
	    \fill (\x,\y) circle (\dotPt);
	  }
	}
	
	% Labels (upper and lower)
	\foreach \i in {1,...,7}{
	  \node at (\i,4.5) {\tiny $\i$};
	  \node at (\i,-0.5) {\tiny $\i$};
	}
	\draw[red] (8-13/5,4/5) circle[radius=1/3];
	\draw[red] (8-26/7,16/7) circle[radius=1/3];
	\draw[red] (8-14/3,4/3) circle[radius=1/3];
	\draw[red] (8-16/3,2/3) circle[radius=1/3];
	
	\node at (4,-1.5) {$LLRLLRR$};	
	\node at (4,-2.5) {$7x^2 - 16xy - 8y^2$};
	
	\end{scope}
	
	\begin{scope}[shift={(11,0)}]
	\def\dotPt{4pt}
	
	% Lines
	\foreach \x/\y in {
	  1/6, 2/7
	}{
	  \draw (\x,0) -- (\y,4);
	}
	
	\foreach \x/\y in {
	  1/3, 2/4, 3/5, 4/6, 5/7
	}{
	  \draw[white, line width=3pt] (\x,4) -- (\y,0);
	  \draw (\x,4) -- (\y,0);
	}
	
	% Base lines
	\foreach \y in {0,4}{
	  \draw (0,\y) -- (8,\y);
	}
	
	% Dots
	\foreach \x in {1,...,7}{
	  \foreach \y in {0,4}{
	    \fill (\x,\y) circle (\dotPt);
	  }
	}
	
	% Labels (upper and lower)
	\foreach \i in {1,...,7}{
	  \node at (\i,4.5) {\tiny $\i$};
	  \node at (\i,-0.5) {\tiny $\i$};
	}
	
	\draw[red] (8-22/7,16/7) circle[radius=1/3];
	\draw[red] (8-27/7,12/7) circle[radius=1/3];
	\draw[red] (8-32/7,8/7) circle[radius=1/3];
	\draw[red] (8-37/7,4/7) circle[radius=1/3];
	
	\node at (4,-1.5) {$LLLRLLR$};	
	\node at (4,-2.5) {$4x^2 - 12xy - 11y^2$};
	
	\end{scope}
	
	\end{tikzpicture}
		\caption{Four braids corresponding to the partitions $4$, $3+1$, $2+1+1$, and $1+1+1+1$ of $4$. The braid corresponding to the partition $2+2$ does not define a knot.}
		\label{n=4}
	\end{figure}
\end{example}

\begin{example}
	For $n=6$, the tuples $(p_1, q_1; \dots)$ satisfying the conditions are $(2,7)$, $(3,4)$, $(4,3)$, $(7,2)$, $(2,2; 1,3)$, $(2,4; 1,2)$, $(2,3; 2,2)$, $(2,2; 3,2)$, $(2,1; 4,2)$, $(3,1; 2,2)$, and $(2,1; 1,1; 1,2)$. Among these, the cases $(2,1; 2,3)$ and $(3,2; 1,2)$ do not define a knot. The resulting set is
	\[
		A_6 = \{t^6 - t^5 + t^4 - t^3 +t^2 -t+1, t^6-t^5+t^3-t+1\}.
	\]
\end{example}

For each $n$, let $k(n)$ denote the number of tuples counted by \eqref{eq:count} whose corresponding Lorenz link is a knot. In the examples above, we have $k(2) = 2$, $k(4) = 4$, and $k(6) = 9$. Performing similar computations for other $n$ yields the following list.

\begin{center}
\begin{tabular}{c|ccccccccccccccccccccc}
$n$   & 2 & 4 & 6 & 8 & 10 & 12 & 14 & 16 & 18 & 20 & 22 & 24 & 26 & 28 & 30 \\
\hline \hline
$\# A_n$ & 1 & 1 & 2 & 2 & 3 & 5 & 6 & 9 & 14 & 20 & 27 & 41 & 55 & 80 & 113  \\ \hline
$k(n)$ & 2 & 4 & 9 & 16 & 28 & 52 & 84 & 134 & 223 & 349 & 532 & 824 & 1226 & 1808 & 2693 \\ \hline
$p(n)$ & 2 & 5 & 11 & 22 & 42 & 77 & 135 & 231 & 385 & 627 & 1002 & 1575 & 2436 & 3718 & 5604 
\end{tabular}
\end{center}

As suggested by this table, $p(n)$ serves only as a trivial upper bound, and there exist many tuples $(p_1, q_1; ...)$ whose Lorenz braids determine the same knot or share the same Alexander polynomial. For instance, it is immediate that taking the conjugate of a partition corresponds, in terms of the Lorenz braid, to flipping the braid over like turning a sheet of paper, or in terms of the word, to swapping $L$ and $R$. The knot obtained by this operation remains unchanged. Moreover, as Dehornoy~\cite[Proposition 4.20]{Dehornoy2011} showed, there is another nontrivial duality, in which the word is read in reverse order while swapping $L$ and $R$. For example, $W = L^4 R^3 L^4 R^3 LR^2$ is transformed into $W' = L^2 RL^3 R^4 L^3 R^4$. The corresponding modular braid is given below. Although its relationship with the modular braid $\mathbb{M}(W) = \mathbb{L}(2,4; 1,2; 3,1; 2,2)$ shown in \cref{Lorenz-braid-example} is not immediately clear from the viewpoint of braid diagrams, the knot obtained by this operation also remains unchanged.

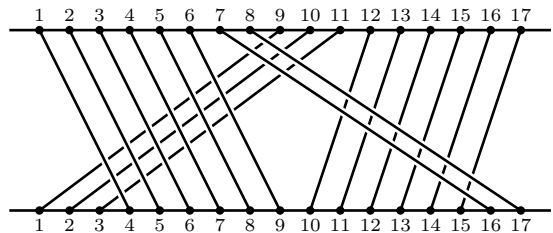
\begin{figure}[H]
	\centering
	\begin{tikzpicture}[line width=1pt, scale=0.4]
	\begin{scope}[shift={(0,0)}]
	\def\dotPt{4pt}
	
	% Lines
	\foreach \x/\y in {
	  1/9, 2/10, 3/11, 10/12, 11/13, 12/14, 13/15, 14/16, 15/17 
	}{
	  \draw (\x,0) -- (\y,6);
	}
	
	\foreach \x/\y in {
	  1/4, 2/5, 3/6, 4/7, 5/8, 6/9, 7/16, 8/17
	}{
	  \draw[white, line width=3pt] (\x,6) -- (\y,0);
	  \draw (\x,6) -- (\y,0);
	}
	
	% Base lines
	\foreach \y in {0,6}{
	  \draw (0,\y) -- (18,\y);
	}
	
	% Dots
	\foreach \x in {1,...,17}{
	  \foreach \y in {0,6}{
	    \fill (\x,\y) circle (\dotPt);
	  }
	}
	
	% texts
	\foreach \x in {1,...,17}{\node at (\x,6.5) {\tiny $\x$};}
	\foreach \x in {1,...,17}{\node at (\x,-0.5) {\tiny $\x$};}
	\end{scope}
	\end{tikzpicture}
	\caption{Modular braid $\mathbb{M}(W') = \mathbb{L}(3,6; 6,2)$.}
\end{figure}
To clarify the mechanism, the real quadratic irrationals associated with $W$ and $W'$ are
\begin{align*}
	w &= [\overline{4,3,4,3,1,2}] = \frac{150 + \sqrt{31506}}{76},\\
	w' &= [\overline{2,1,3,4,3,4}] \sim [-1, 1, 1, 1, \overline{3,4,3,4,2,1}] = \frac{150 - \sqrt{31506}}{76},
\end{align*}
respectively. In general, this operation produces the Galois conjugate. According to the definition of modular knots via the geodesic flow, it simply reverses the orientation of the modular knot.

However, the table above suggests that there are many other nontrivial coincidences among modular knots beyond these examples. It would be interesting to know whether one can provide a criterion for when they define the same knot, or a sharper upper bound.

% --------------------------------------------------------------------------
\section{Coefficients of the Alexander polynomials of modular knots}\label{sec:Coeff-Alexander}
% --------------------------------------------------------------------------

As an analogue of Schur and Suzuki's theorem~\cite{Suzuki1987} for cyclotomic polynomials, we obtain the following result.

\begin{theorem}\label{thm:coeff-Alexander}
	Every integer appears as a coefficient of the Alexander polynomial of a modular knot.
\end{theorem}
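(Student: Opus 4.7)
The plan is to exhibit, for each integer $N$, an explicit modular knot whose Alexander polynomial $\Delta_K(t)$ has $N$ among its coefficients. The key ingredient is \cref{thm:Burau-explicit}, which realizes the Burau matrix of an arbitrary Lorenz braid in block form with entries depending explicitly on the parameters $p_i, q_i$. Since the family of torus knots (accessible via \cref{thm:Torus-knot-modular}) has only $\pm 1$ coefficients, we must step outside this family, which suggests working with Lorenz braids of depth $r \ge 2$.

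First, I would fix a tractable one-parameter family of Lorenz braids $\{\sigma_m\}_{m \ge m_0}$ in which a single parameter grows linearly while all others remain fixed: a concrete candidate is a two-block family such as $\mathbb{L}(1, m; p, 1)$ or $\mathbb{L}(p, 1; 1, m)$ for a well-chosen fixed $p$. Using \cref{cor:perm-mat}, the closure of $\sigma_m$ is a knot precisely when the induced permutation on $P_r+Q_r$ letters is a single cycle, a condition that can be verified to hold along an arithmetic progression of $m$'s by a straightforward coprimality check between $m$ and the remaining parameters.

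Next, I would apply \cref{thm:Burau-explicit} to write down $\beta_{P_r+Q_r}(\sigma_m)$, and use Laplace expansion along the identity blocks $I_{p_i}$ in the lower half of the matrix to reduce $\det(s I - \beta_{P_r+Q_r}(\sigma_m))$ to a determinant of size bounded independently of $m$, with entries polynomial in $s, t, m$. Dividing by $[P_r + Q_r]_t$ as in \cref{def:Alexander-poly} then gives $\Delta_{\widehat{\sigma_m}}(t)$. The rank-one structure of the blocks $A_{q, p} = (1-t)\mathbf{1}_q(1, t, \ldots, t^{p-1})$ limits how many minors of the Burau matrix can contribute to a given power of $t$, which should make one particular coefficient of $\Delta_{\widehat{\sigma_m}}(t)$ (say the coefficient of $t^{k_m}$ for a suitable $k_m$ near the middle of the polynomial) depend affinely in $m$ with nonzero leading term. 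This coefficient then ranges over an entire arithmetic progression $\{am+b : m \equiv r \pmod{d}\}$ as $m$ varies. Finally, by choosing finitely many such one-parameter families whose arithmetic progressions together cover every residue class modulo $a$, one realizes every integer $N$ as a coefficient.

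The main obstacle is the explicit identification of a coefficient of $\Delta_{\widehat{\sigma_m}}$ as a linear function of $m$ with nonzero slope. This requires patient bookkeeping of the Laplace expansion, but the block structure in \cref{thm:Burau-explicit} and the fact that each $A_{q, p}$ has rank one should keep the algebra manageable; the contributions that do depend on $m$ come from determinants of sub-matrices whose sizes grow with $m$ but whose form admits a recursion in $m$, so that the resulting coefficient can be tracked by an elementary induction. A secondary, but routine, point is the uniform verification of the knot condition for the chosen arithmetic progression of $m$'s via \cref{cor:perm-mat}. The case $N = 0$ is automatic, since many of the examples in \cref{sec:Alexander-finite} already exhibit zero coefficients, so the substantive work lies in realizing arbitrarily large positive and negative values, with the reciprocal symmetry (\cref{lem:sym-Alexander}) ensuring that every coefficient value obtained also appears on the mirrored side of $\Delta$.
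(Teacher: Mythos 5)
Your overall strategy---pick a one-parameter family of Lorenz braids and show that some coefficient of the Alexander polynomial is an affine function of the parameter with nonzero slope---is exactly the strategy of the paper. But the entire substance of the proof lies in the step you defer: actually exhibiting a family for which such a linearly growing coefficient exists and verifying it. Your proposal asserts that the rank-one structure of the blocks $A_{q,p}$ ``should'' force some middle coefficient to grow affinely in $m$, but this is not a consequence of the block structure in general, and for your specific candidates it is doubtful. The two-block families $\mathbb{L}(1,m;p,1)$ and $\mathbb{L}(p,1;1,m)$ have $p_1=1$ or $q_r=1$, so by \cref{lem:Markov} they destabilize to smaller braids; more importantly, two-block Lorenz knots are twisted torus knots, a class for which the Alexander coefficients are often tightly constrained. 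Indeed, the paper's own \cref{thm:neg-long} uses the two-block family $\mathbb{L}(n,n;n+1,2n)$, and even there the coefficients in the analyzed range stay in $\{0,-1,-2\}$ as $n\to\infty$ --- one growing parameter does not by itself produce unbounded coefficients. The published proof instead uses the three-block family $\mathbb{L}(2,n;3,4;n+2,2)$, in which \emph{two} entries grow simultaneously, and it is the interaction of these two growing blocks that produces coefficients of size roughly $n$ near the middle of the polynomial.

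There is also a practical divergence worth noting. Rather than Laplace-expanding the full $(P_r+Q_r)\times(P_r+Q_r)$ Burau matrix of \cref{thm:Burau-explicit} (whose size grows with the parameter), the paper first passes to a braid on $5$ strands with the same closure, via the braid-index/template reduction of \cref{rem:T-braid}, and then computes the reduced Burau representation $\beta_5^r$ of the fixed-size braid $\sigma_1^n(\sigma_1\sigma_2\sigma_3\sigma_4)^6\sigma_4^{n+2}$ by induction on $n$. This keeps the determinant at size $4\times 4$ with entries that are explicit rational functions of $t$ and $t^n$, from which the coefficient formula (and hence the realization of every integer, split by parity) is read off directly. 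Your route could in principle be made to work, but as written the key claim is unverified and the proposed families are the wrong ones; to complete the argument you would need to replace them with a family in which at least two block parameters grow together and then carry out the explicit coefficient computation.
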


\begin{proof}
	It suffices to explicitly construct a modular knot whose Alexander polynomial has each given integer as one of its coefficients. For a positive even integer $n$, we consider the word $W_n = L^{m+1} R^{m+1} L R^{m+1} LRLR L^m R$ with $m = n/2+1$. The corresponding modular braid is
	\[
		\mathbb{M}(W_n) = \mathbb{L}(2,n; 3,4; n+2,2).
	\]
	Using the template deformation given by Birman--Williams~\cite[Figure 5.1]{BirmanWilliams1983}, we construct a braid with the same closure as $\mathbb{M}(W_n)$ that realizes the braid index of this knot (which, as shown in~\cref{sec:braid-index}, equals the number of subword $LR$, equal to $5$ in this case). Then compute its Alexander polynomial, (see also Birman--Kofman~\cite[Figures 4 and 5]{BirmanKofman2009}).
	\begin{figure}[H]
	\centering
	\begin{tikzpicture}[line width=1pt, scale=0.3]
	\begin{scope}[shift={(5,-5)}]
	\def\dotPt{4pt}
	
	% Lines
	\foreach \x/\y in {
	  1/9, 2/10, 5/11, 6/12, 7/13
	}{
	  \draw (\x,0) -- (\y,6);
	}
	\foreach \x/\y in {
	  12/14, 13/15, 14/16, 15/17
	}{
	  \draw[orange] (\x,0) -- (\y,6);
	}
	
	\foreach \x/\y in {
	  1/3, 2/4, 3/8
	}{
	  \draw[white, line width=3pt] (\x,6) -- (\y,0);
	  \draw[red] (\x,6) -- (\y,0);
	}
	
	\foreach \x/\y in {
	  4/9, 5/10, 6/11, 7/16, 8/17
	}{
	  \draw[white, line width=3pt] (\x,6) -- (\y,0);
	  \draw[blue] (\x,6) -- (\y,0);
	}
	
	\foreach \x in {
	  1,2,3
	}{
	  \draw[red, line width=0.5pt] (\x,6) -- (\x,6+0.5*\x) -- (0.5-0.5*\x,6+0.5*\x) -- (0.5-0.5*\x,-0.5*\x) -- (\x,-0.5*\x) -- (\x,0);
	}
	\foreach \x in {
	  4,5,6,7,8
	}{
	  \draw[blue, line width=0.5pt] (\x,6) -- (\x,6+0.5*\x) -- (0.5-0.5*\x,6+0.5*\x) -- (0.5-0.5*\x,-0.5*\x) -- (\x,-0.5*\x) -- (\x,0);
	}
	
	\foreach \x in {
	  1,2,3,4
	}{
	  \draw[orange, line width=0.5pt] (18-\x,6) -- (18-\x,6+0.5*\x) -- (18-0.5+0.5*\x,6+0.5*\x) -- (18-0.5+0.5*\x,-0.5*\x) -- (18-\x,-0.5*\x) -- (18-\x,0);
	}
	
	\foreach \x in {
	 9,10,11,12,13
	}{
	 \draw (\x,6) -- (\x,8); \draw (\x,0) -- (\x, -2);
	 \fill (\x, 8) circle (\dotPt); \fill (\x, -2) circle (\dotPt);
	}
	
	% Dots
	\foreach \x in {1,...,17}{
	  \foreach \y in {0,6}{
	    \fill (\x,\y) circle (\dotPt);
	  }
	}
	
	% Labels (upper and lower)
	\foreach \i in {1,...,17}{
	  \node at (\i+0.4,6.1) {\scalebox{0.4}{$\i$}};
	  \node at (\i+0.4,-0.1) {\scalebox{0.4}{$\i$}};
	}
	
	\node at (21.5,3) {\Large{\boldsymbol{$\rightarrow$}}};
	
	\end{scope}
	
	\begin{scope}[shift={(25,-10)}]
	\def\dotPt{4pt}

	% Lines
	\foreach \x/\y in {
	  1/9, 2/10, 5/11, 6/12, 7/13
	}{
	  \draw (\x,21) -- (\y,24); \draw (\y,24) -- (\y,25); \draw (\y,-3) -- (\y,-5);
	}
	\foreach \x/\y in {
	  12/14, 13/15, 14/16, 15/17
	}{
	  \draw[orange] (\x,-3) -- (\y,0);
	}
	
	\foreach \x/\y in {
	  1/3, 2/4, 3/8
	}{
	  \draw[white, line width=3pt] (\x,24) -- (\y,21);
	  \draw[red] (\x,24) -- (\y,21);
	}
	
	\draw[blue] (4,21) -- (4,20);
	\draw[blue] (8,16) -- (8,13) -- (7,12) -- (7,10) -- (6,9) -- (6,7) -- (5,6) -- (5,4) -- (4,3) -- (4,0);
	\draw[blue] (5,21) -- (5,20) -- (4,19) -- (4,16);
	\draw[blue] (8,12) -- (8,9) -- (7,8) -- (7,6) -- (6,5) -- (6,3) -- (5,2) -- (5,0);
	\draw[blue] (6,21) -- (6,19) -- (5,18) -- (5,16) -- (4,15) -- (4,12);
	\draw[blue] (8,8) -- (8,5) -- (7,4) -- (7,2) -- (6,1) -- (6,0);
	\draw[blue] (7,21) -- (7,18) -- (6,17) -- (6,15) -- (5,14) -- (5,12) -- (4,11) -- (4,8);
	\draw[blue] (8,4) -- (8,1) -- (7,0);
	\draw[blue] (8,21) -- (8,17) -- (7,16) -- (7,14) -- (6,13) -- (6,11) -- (5,10) -- (5,8) -- (4,7) -- (4,4);
	
	\foreach \x/\y in {
	  4/9, 5/10, 6/11, 7/16, 8/17
	}{
	  \draw[white, line width=3pt] (\x,0) -- (\y,-3);
	  \draw[blue] (\x,0) -- (\y,-3);
	}
	
	\foreach \x/\y in {
	  20/16, 16/12, 12/8, 8/4, 4/0
	}{
	  \draw[white, line width=3pt] (4,\x) -- (8,\y);
	  \draw[blue] (4,\x) -- (8,\y);
	}
	
	\foreach \x in {
	  1,2,3
	}{
	  \draw[red, line width=0.5pt] (\x,24) -- (\x,24+0.5*\x) -- (0.5-0.5*\x,24+0.5*\x) -- (0.5-0.5*\x,21-0.5*\x) -- (\x,21-0.5*\x) -- (\x,21);
	}
	
	\foreach \x in {
	  1,2,3,4
	}{
	  \draw[orange, line width=0.5pt] (18-\x,0) -- (18-\x,0+0.5*\x) -- (18-0.5+0.5*\x,0+0.5*\x) -- (18-0.5+0.5*\x,-3-0.5*\x) -- (18-\x,-3-0.5*\x) -- (18-\x,-3);
	}

	% Dots
	\foreach \x in {1,2,3}{
	  \foreach \y in {24}{
	    \fill (\x,\y) circle (\dotPt);  \node at (\x+0.4,\y+0.1) {\scalebox{0.4}{$\x$}};
	  }
	}
	\foreach \x in {1,2,3}{
	  \foreach \y in {21}{
	    \fill (\x,\y) circle (\dotPt);  \node at (\x+0.4,\y-0.2) {\scalebox{0.4}{$\x$}};
	  }
	}
	\foreach \x in {4,5,6,7,8}{
	  \foreach \y in {21}{
	    \fill (\x,\y) circle (\dotPt); \node at (\x+0.4,\y-0.2) {\scalebox{0.4}{$\x$}};
	  }
	}
	\foreach \x in {4,5,6,7,8}{
	  \foreach \y in {0}{
	    \fill (\x,\y) circle (\dotPt); \node at (\x+0.4,\y+0.1) {\scalebox{0.4}{$\x$}};
	  }
	}
	\foreach \x in {9,10,11,12,13}{
	  \foreach \y in {25}{
	    \fill (\x,\y) circle (\dotPt); \node at (\x+0.4,\y+0.1) {\scalebox{0.4}{$\x$}};
	  }
	}
	\foreach \x in {9,10,11,12,13}{
	  \foreach \y in {-5,-3}{
	    \fill (\x,\y) circle (\dotPt); \node at (\x+0.4,\y-0.2) {\scalebox{0.4}{$\x$}};
	  }
	}
	\foreach \x in {14,15,16,17}{
	  \foreach \y in {-3,0}{
	    \fill (\x,\y) circle (\dotPt); \node at (\x+0.4,\y-0.2) {\scalebox{0.4}{$\x$}};
	  }
	}
	\node at (18,13) {\Large{\boldsymbol{$\rightarrow$}}};

	\end{scope}
	
	\begin{scope}[shift={(44,-10)}]
	\def\dotPt{4pt}
	
	% Lines
	\draw (4,29) -- (4,28); 
	\draw (5,29) -- (5,28);
	\draw (6,29) -- (6,25) -- (5,24) -- (5,21);
	\draw (7,29) -- (7,24) -- (6,23) -- (6,21);
	\draw (8,29) -- (8,23) -- (7,22) -- (7,21);
	\foreach \x in {4,5,6,7,8}{
	\draw (\x,-6) -- (\x,-5);}
	
	\draw[red] (5,28) -- (4,27);
	\draw[red] (5,27) -- (4,26);
	\draw[red] (5,26) -- (4,25) -- (4,21);
	\draw[red] (8,22) -- (8,21);
	
	\draw[blue] (4,21) -- (4,20);
	\draw[blue] (8,16) -- (8,13) -- (7,12) -- (7,10) -- (6,9) -- (6,7) -- (5,6) -- (5,4) -- (4,3) -- (4,-5);
	\draw[blue] (5,21) -- (5,20) -- (4,19) -- (4,16);
	\draw[blue] (8,12) -- (8,9) -- (7,8) -- (7,6) -- (6,5) -- (6,3) -- (5,2) -- (5,-5);
	\draw[blue] (6,21) -- (6,19) -- (5,18) -- (5,16) -- (4,15) -- (4,12);
	\draw[blue] (8,8) -- (8,5) -- (7,4) -- (7,2) -- (6,1) -- (6,-5);
	\draw[blue] (7,21) -- (7,18) -- (6,17) -- (6,15) -- (5,14) -- (5,12) -- (4,11) -- (4,8);
	\draw[blue] (8,4) -- (8,1) -- (7,0);
	\draw[blue] (8,21) -- (8,17) -- (7,16) -- (7,14) -- (6,13) -- (6,11) -- (5,10) -- (5,8) -- (4,7) -- (4,4);

	\foreach \x/\y in {
	  20/16, 16/12, 12/8, 8/4, 4/0
	}{
	  \draw[white, line width=3pt] (4,\x) -- (8,\y);
	  \draw[blue] (4,\x) -- (8,\y);
	}
	
	\foreach \x/\y/\xx/\yy in {
	  4/28/5/27, 4/27/5/26, 4/26/8/22
	}{
	  \draw[white, line width=3pt] (\x+0.1,\y-0.1) -- (\xx-0.1,\yy+0.1);
	  \draw[red] (\x,\y) -- (\xx,\yy);
	}
	
	\draw[orange] (7,0) -- (7,-1);
	\draw[orange] (8,0) -- (8,-1) -- (7,-2);
	\draw[orange] (8,-2) -- (7,-3);
	\draw[orange] (8,-3) -- (7,-4);
	\draw[orange] (8,-4) -- (7,-5);
	\foreach \y/\yy in {
	  -1/-2, -2/-3, -3/-4, -4/-5
	}{
	  \draw[white, line width=3pt] (7+0.1,\y-0.1) -- (8-0.1,\yy+0.1);
	  \draw[orange] (7,\y) -- (8,\yy);
	}
	
	% Dots
	\foreach \x in {4,5,6,7,8}{
	  \foreach \y in {-6,0,21,29}{
	    \fill (\x,\y) circle (\dotPt); 
	  }
	}
	
	\end{scope}

	\end{tikzpicture}
		\caption{The braid $\sigma_1^n (\sigma_1 \sigma_2 \sigma_3 \sigma_4)^6 \sigma_4^{n+2} \in B_5$ has the same closure as the above modular braid $\mathbb{M}(W_n)$. This figure illustrates the case $n=2$.}
		\label{}
	\end{figure}
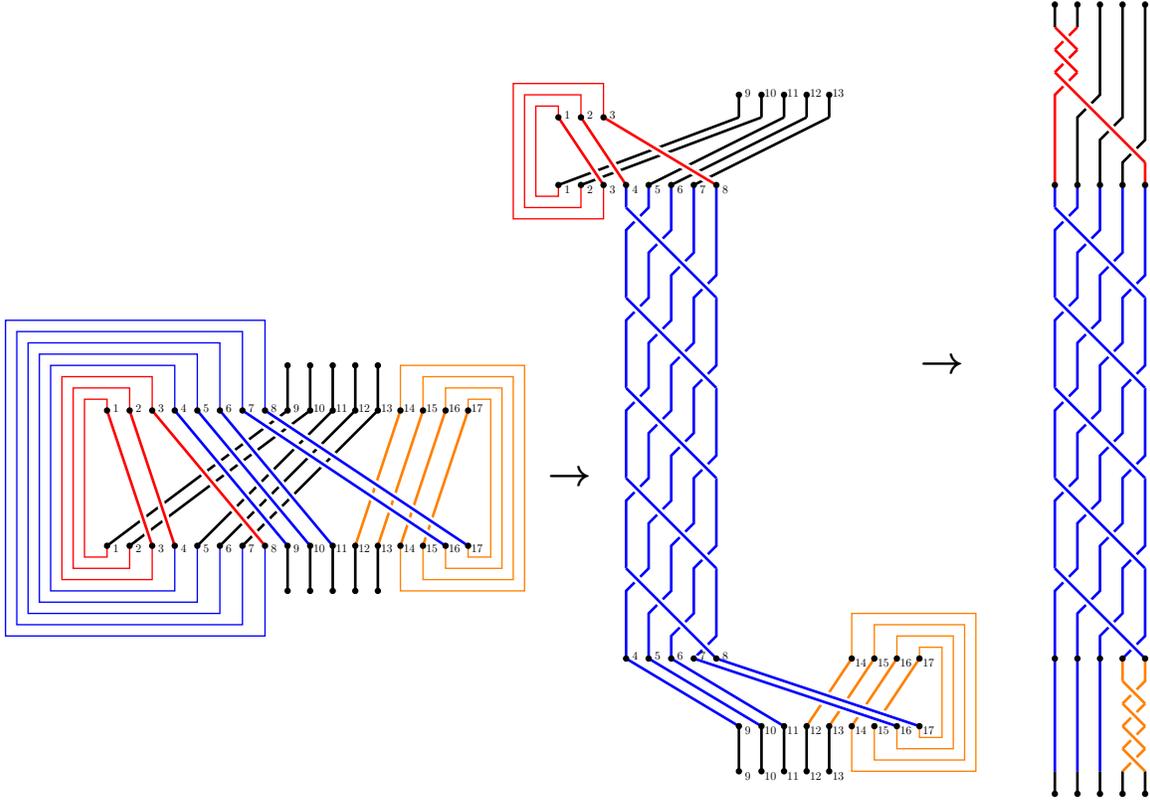
	Hence, by using the expression in \eqref{def:Alexander-second}, we obtain
	\[
		\Delta_{\widehat{\mathbb{M}}(W_n)}(t) = \frac{1}{1+t+t^2+t^3+t^4} \det(I_4 - \beta_5^r(\sigma_1^n (\sigma_1 \sigma_2 \sigma_3 \sigma_4)^6 \sigma_4^{n+2})).
	\]
	Since
	\[
		\beta_5^r(\sigma_1^n (\sigma_1 \sigma_2 \sigma_3 \sigma_4)^6 \sigma_4^{n+2}) = \pmat{
 -t^{n+6} & -t^{n+7} & -t^{n+8} & \frac{-t^{n+9} (1+t^{n+3})}{1+t} \\
 \frac{t^5(1+t^{n+1})}{1+t} & \frac{-t^7 (1-t^n)}{1+t} & \frac{-t^8 (1-t^n)}{1+t} & \frac{-t^9 (1-t^n) (1+t^{n+3})}{(1+t)^2} \\
 0 & t^5 & 0 & 0 \\
 0 & 0 & t^5 & \frac{t^6(1-t^{n+2})}{1+t}},
	\]
	(which can be checked by induction), we have
	\begin{align*}
		\Delta_{\widehat{\mathbb{M}}(W_n)}(t) &= \frac{1}{(1+t)^2} (1+t-t^2 -t^3 +t^5 -t^9 +t^{10}\\
			&\qquad +t^{n+6} -t^{n+10} + t^{n+11} +t^{n+13} -t^{n+14} + t^{n+18} \\
			&\qquad +t^{2n+14}  -t^{2n+15} +t^{2n+19} -t^{2n+21} -t^{2n+22} +t^{2n+23} +t^{2n+24}).
	\end{align*}
	This defines a polynomial, which for $n \ge 10$ has the following expression (the cases $2 \le n \le 8$ are not written explicitly, but they are also polynomials).
	\begin{align*}
			\Delta_{\widehat{\mathbb{M}}(W_n)}(t) &= 1-t+t^5-2t^6+3t^7-4t^8+4t^9-3t^{10}+2t^{11}-t^{12}\\
			&\qquad +t^{14} - 2t^{15} + 3t^{16} - \cdots - (n-8)t^{n+5}\\
			&\qquad + (n-6)t^{n+6} - (n-4)t^{n+7} + (n-2)t^{n+8} - n t^{n+9}\\
			&\qquad + (n+1)t^{n+10} - (n+1)t^{n+11} + (n+1)t^{n+12}\\
			&\qquad -n t^{n+13} + (n-2)t^{n+14} - (n-4)t^{n+15} + (n-6)t^{n+16}\\
			&\qquad - (n-8)t^{n+17} + \cdots + 3t^{2n+6} - 2t^{2n+7} + t^{2n+8}\\
			&\qquad -t^{2n+10} + 2t^{2n+11} - 3t^{2n+12} + 4t^{2n+13} - 4t^{2n+14} \\
			&\qquad \qquad + 3t^{2n+15} -2t^{2n+16} + t^{2n+17} - t^{2n+21} + t^{2n+22}.
	\end{align*}
	Therefore, negative even and positive odd integers appear as coefficients for sufficiently large $n$, while a negative odd integer $k \le -3$ appears when $n=-k-1$, and a positive even integer $k$ appears when $n=k+2$.
\end{proof}

The Alexander polynomials constructed in the proof above exhibit alternating signs among their nonzero coefficients. 
Such alternation of signs also occurs for prime alternating knots~\cite[Theorem 2.13]{Crowell1959} and for $L$-space knots~\cite[Corollary 1.3]{OzsvathSzabo2005}, which include all torus knots.
%Such alternation of signs also occurs for torus knots, more generally, for $L$-space knots. \piros{(Is this claim true? Reference?)} 
In contrast, for modular knots, as shown in the following theorem, there exist sequences of coefficients with the same sign of arbitrarily long length.

\begin{theorem}\label{thm:neg-long}
	For the coefficients of the Alexander polynomial of a modular knot, negative coefficients can appear consecutively for an arbitrarily long run.
\end{theorem}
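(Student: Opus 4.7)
The plan is to exhibit, for each $N \ge 1$, an explicit modular knot $K_N$ whose Alexander polynomial contains at least $N$ consecutive negative coefficients, by a method that parallels the proof of \cref{thm:coeff-Alexander}. I would choose a family of Lyndon words $\{W_N\}_{N\ge 1}$, apply the Birman--Williams template deformation so that $\widehat{\mathbb{M}}(W_N)$ is realized as the closure of a positive braid $\tau_N$ in a braid group $B_k$ of fixed rank, and then compute $\Delta_{\widehat{\mathbb{M}}(W_N)}(t)$ in closed form via the reduced Burau representation and \cref{def:Alexander-poly}.

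Concretely, I would look for a family whose template reduction produces a $T$-braid of the form $\tau_N = \mathbb{T}((r_1,s_1),\ldots,(r_{k-1},s_{k-1}),(r_k, s_k(N)))$ with only one exponent $s_k(N)$ depending linearly on $N$. By Cayley--Hamilton applied to $M = \beta_k^r(\sigma_1\sigma_2\cdots \sigma_{r_k-1})$, the matrix $M^{s_k(N)}$ depends on $N$ through scalars of the form $\lambda_j^{s_k(N)}$ where the $\lambda_j$ are eigenvalues of $M$ (products of $\pm t$ with $r_k$-th roots of unity). The determinant $\det(I_{k-1} - \beta_k^r(\tau_N))$ therefore takes the form $F(t) + t^{s_k(N)} G(t)$ for fixed polynomials $F, G$, and dividing by $[k]_t$ yields a closed-form expression for $\Delta_{\widehat{\mathbb{M}}(W_N)}(t)$.

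The goal is to choose the family so that this expression has the shape
\[
\Delta_{\widehat{\mathbb{M}}(W_N)}(t) = A(t) - t^{d} \cdot \frac{1-t^{N}}{1-t} + B(t),
\]
where $A(t), B(t) \in \mathbb{Z}[t]$ are independent of $N$ and have support disjoint from the interval $[d, d+N-1]$. The middle term expands as $-t^d - t^{d+1} - \cdots - t^{d+N-1}$, directly producing the required $N$ consecutive negative coefficients.

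The main obstacle is constructing the family: the family in \cref{thm:coeff-Alexander} has $N$-dependent factors $\sigma_1^n$ and $\sigma_4^{n+2}$, both of whose reduced-Burau images have the single non-trivial eigenvalue $-t$, so the corresponding geometric series $(1-(-t)^n)/(1+t)$ forces strictly alternating signs. To break the alternation, the $N$-dependent factor in $\tau_N$ must involve either an even exponent in $N$ (so that $(-t)^{s_k(N)} = t^{s_k(N)}$) or a generator whose reduced-Burau image has eigenvalues other than $\pm t$; both options require a careful reconfiguration of the Lorenz tuple, for example, a three-block family $\mathbb{L}(p_1, q_1; p_2, q_2; p_3, q_3)$ with the middle block scaling with $N$ so that the $T$-braid acquires an $N$-dependent factor of the form $(\sigma_1\cdots\sigma_{r-1})^{2N}$ with $r \ge 3$. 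Once a suitable family is fixed, verifying that $A(t)$ and $B(t)$ have supports disjoint from $[d, d+N-1]$ is a routine degree count using the block structure in \cref{thm:Burau-explicit}, whereas identifying the family itself is the essential combinatorial problem.
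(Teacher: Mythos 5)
There is a genuine gap: your argument is a strategy outline whose central object --- the family $\{W_N\}$ --- is never produced, and you acknowledge that ``identifying the family itself is the essential combinatorial problem.'' Without an explicit family, nothing is proved. In addition, two structural assumptions underlying the plan are shaky. First, you fix the braid-group rank $k$ and let a single exponent $s_k(N)$ grow, and then claim $\det(I_{k-1}-\beta_k^r(\tau_N))$ has the form $F(t)+t^{s_k(N)}G(t)$. But $\det(I-X)$ involves products of up to $k-1$ eigenvalues of $X$, and the relevant eigenvalues are of the form $t\zeta$ with $\zeta^{r_k}=1$, so generically you obtain $\sum_{m=0}^{k-1}t^{m\,s_k(N)}G_m(t)$ with the $G_m$ depending on $s_k(N)\bmod r_k$ --- several $N$-dependent exponents, not one. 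Second, the target shape $A(t)-t^{d}(1+t+\cdots+t^{N-1})+B(t)$ is an additional unjustified hope; nothing in the Cayley--Hamilton step forces the $N$-dependent contribution to collapse to a clean geometric-series block of $-1$'s.

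The paper's proof supplies exactly what you leave open, and it departs from your template on every point. The family is $W_n=(LLR)^n(LR)^nR$, i.e.\ $\mathbb{M}(W_n)=\mathbb{L}(n,n;\,n+1,2n)$, whose braid index grows like $2n$, so there is no fixed-rank reduction; both Lorenz blocks scale with $n$, not a single twist exponent. The closure is identified, via \cref{rem:T-braid}, with the twisted torus knot $T(2n+1,2n;n,1)$, and the closed formula of Park--Adnan is invoked rather than recomputing reduced Burau matrices. Finally, the negative coefficients are located by solving congruences: in the window of exponents $2n^2+k$ with $1\le k\le (2n+1)/3$ (for $n\equiv 1\pmod 3$), the coefficients follow the periodic pattern $(-1,-1,-1,0,-2,0)$ rather than forming a solid run of $-1$'s. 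To complete your route you would need to exhibit a concrete family and carry out the coefficient analysis; as written, the proposal defers the only hard step and rests on a determinant identity that does not hold in the generality claimed.
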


\begin{proof}
	For a positive integer $n$, we consider the word $W_n = (LLR)^n (LR)^n R$. The corresponding modular braid is
	\[
		\mathbb{M}(W_n) = \mathbb{L}(n,n; n+1, 2n).
	\]
	Through the identification with the $T$-braid described in \cref{rem:T-braid}, its closure coincides with $\widehat{\mathbb{T}}((n,n), (2n+1, 2n))$ and with the twisted torus knot $T(2n+1, 2n; n, 1)$ in the notation of Park--Adnan~\cite{ParkAdnan2025}. Although they use the knot group and Fox derivative calculus to obtain the formula for the Alexander polynomial of twisted torus knots, as noted in \cref{rem:Alex-def}, our definition using the Burau representation agrees with theirs up to multiplication by a unit. Applying \cite[Theorem 1]{ParkAdnan2025}, its Alexander polynomial is given as follows.
	\[
		\Delta_{\widehat{\mathbb{M}}(W_n)}(t) = \frac{1-t}{1-t^n} \frac{-t^{-n(2n+1)}}{(1-t^{2n+1})(1-t^{2n})} \bigg(\widetilde{X}(t) Y(t) - X(t) \widetilde{Y}(t) \bigg),
	\]
	where
	\begin{align*}
		X(t) &= 1 - t^{n(5n+1)} - (1-t^n) \sum_{j=1}^{n-1} t^{2n^2 + (3n+1)j},\\
		\widetilde{X}(t) &= 1-t^{n(2n+1)},\\
		Y(t) &= 1 - t^{n(5n+1)} - (1-t^n) \sum_{j=1}^{n-1} t^{2n^2 + (3j+1)n},\\
		\widetilde{Y}(t) &= 1-t^{2n(n+1)}.
	\end{align*}
	
	In the following, we assume that $4 \le n \equiv 1 \pmod{3}$ and consider the $(2n^2+k)$-th coefficient of $\Delta_{\widehat{\mathbb{M}}(W_n)}(t)$ for $1 \le k \le (2n+1)/3$. First, by direct calculation, we obtain
	\begin{align*}
		\Delta_{\widehat{\mathbb{M}}(W_n)}(t) = \frac{1-t}{(1-t^{2n+1})(1-t^{2n})} \bigg(1-t^{n(5n+1)} + (1-t^{n(2n+1)}) \sum_{j=1}^{n-1} t^{3nj} - (1-t^{2n(n+1)})\sum_{j=1}^{n-1} t^{(3n+1)j-n} \bigg).
	\end{align*}
	Since
	\[
		\frac{1-t}{(1-t^{2n+1})(1-t^{2n})} = \sum_{j=0}^\infty (t^{2nj} - t^{(2n+1)j+1}),
	\]
	we have
	\[
		\Delta_{\widehat{\mathbb{M}}(W_n)}(t) \equiv \left(\sum_{j=0}^n t^{2nj} - \sum_{j=0}^{n-1} t^{(2n+1)j+1} \right) \left(1 + \sum_{j=1}^{n-1} (t^{3nj} - t^{(3n+1)j-n}) \right) \pmod{t^{2n^2+n}}.
	\]
	For each $1 \le k \le (2n+1)/3$, the term $t^{2n^2+k}$ can appears only when one or more of the three possible terms
	\[
		-t^{2nj_1 + (3n+1)j_2-n}, \quad -t^{(2n+1)j_1+1+3nj_2}, \quad +t^{(2n+1)j_1+1 + (3n+1)j_2-n}
	\]
	coincide with $\pm t^{2n^2+k}$.
	
	(1) For the first term, the equation $2nj_1 + (3n+1)j_2 - n = 2n^2 + k$ implies that $j_2 \equiv k \pmod{n}$. Since $1 \le j_2 \le n-1$, we have $j_2 = k$. Then $j_1 = n + (1-3k)/2 \ge 0$, so such a pair $(j_1, j_2)$ exists only when $k$ is odd.
	
	(2) For the second term, the equation $(2n+1)j_1 +1 + 3nj_2 = 2n^2+k$ implies that $j_1 \equiv k-1 \pmod{n}$. Since $0 \le j_1 < n$ and thus $j_1 = k-1$. Then $j_2 = 2(n+1-k)/3$. By our assumption $n \equiv 1 \pmod{3}$, such a pair $(j_1, j_2)$ exists only when $k \equiv 2\pmod{3}$.
	
	(3) For the last term, similarly, we obtain $j_1+j_2+1 \equiv k \pmod{n}$. Since $0 \le j_1 < n$ and $1 \le j_2 < n$, we have $j_1 + j_2 + 1 = k$ or $n+k$. The case $j_1 + j_2 + 1 = k$ gives $j_1 = 3k-2n-4 < 0$, while the case $j_1 + j_2 + 1 = n+k$ gives $j_1 = n+3k-3 \ge n$. In either case, this contradicts $0 \le j_1 < n$, so the third term does not appear for any $1 \le k \le (2n+1)/3$.
	
	Therefore, the coefficient of $t^{2n^2+k}$ for $1 \le k \le (2n+1)/3$ is given by
	\[
		\begin{cases}
			0 &\text{if } k \equiv 0, 4 \pmod{6},\\
			-1 &\text{if } k \equiv 1,2,3 \pmod{6},\\
			-2 &\text{if } k \equiv 5 \pmod{6}.
		\end{cases}
	\]
	For instance, when $n \equiv 1 \pmod{9}$, the consecutive $(2n+1)/3$ coefficients considered above are all non-positive, and among them, $(4n+5)/9$ coefficients are negative. Hence, by taking $n$ sufficiently large, we see that the number of consecutive negative coefficients can be made arbitrarily large.
\end{proof}

\begin{example}
	For the modular braid $\mathbb{M} = \mathbb{M}((LLR)^4(LR)^4R) = \mathbb{L}(4,4; 5,8)$, we have
	\begin{align*}
		\Delta_{\widehat{\mathbb{M}}}(t) &= t^{68}-t^{67}+t^{60}-t^{59}+t^{56}-t^{55}+t^{52}-t^{51}+t^{48}-2 t^{46}+t^{45}+2 t^{44}-2 t^{43}+t^{40}\\
		&\qquad -t^{38}-t^{37}+3 t^{36}-t^{35}-t^{34}-t^{33}+3 t^{32}-t^{31}-t^{30}\\
		&\qquad +t^{28}-2 t^{25}+2 t^{24}+t^{23}-2 t^{22}+t^{20}-t^{17}+t^{16}-t^{13}+t^{12}-t^9+t^8-t+1.
	\end{align*}
	In the above proof, we considered the terms $t^{2n^2+k}$ with $1 \le k \le (2n+1)/3$. In this case, the three consecutive terms $t^{33}, t^{34}, t^{35}$ have coefficients all equal $-1$.
\end{example}

\bibliographystyle{amsalpha}
\bibliography{References} 

\end{document}